\documentclass[11pt]{amsart}
\usepackage{amsfonts}
\usepackage{amsmath}
\usepackage[mathscr]{eucal}
\usepackage{amscd, latexsym, graphicx, mathrsfs, enumerate}
\usepackage{amssymb}
\usepackage{times}
\usepackage{changepage}
\usepackage{caption}
\usepackage[curve,all]{xy}
\xyoption{curve}
\xyoption{line}
\xyoption{arc}
\xyoption{color}
\usepackage{array}
\usepackage{enumitem}
\usepackage{graphicx}
\usepackage[dvipdfmx]{color}
\usepackage[top=25mm,bottom=25mm,left=25mm,right=25mm]{geometry}

\makeindex

\newtheorem{thm}{{{Theorem}}}[section]
\newtheorem{prop}[thm]{{Proposition}}
\newtheorem{lem}[thm]{{Lemma}}
\newtheorem{cor}[thm]{{Corollary}}

\newtheorem{rmk}[thm]{Remark}
\numberwithin{equation}{section}
\newtheorem{Def}[thm]{Definition}

\def\Z{\mathbb{Z}}
\def\Q{\mathbb{Q}}
\def\R{\mathbb{R}}
\def\C{\mathbb{C}}
\def\A{\mathbb{A}}

\def\GL{{\mathop{\mathrm{GL}}}}

\def\Sp{{\mathop{\mathrm{Sp}}}}
\def\GSp{{\mathop{\mathrm{GSp}}}}
\def\PGSp{{\mathop{\mathrm{PGSp}}}}

\def\diag{{\mathop{\mathrm{diag}}}}

\def\ve{\varepsilon}

\def\min{{\mathrm{min}}}

\def\cL{{\mathcal{L}}}

\def\bs{{\backslash}}
\def\ds{\displaystyle}
\def\ord{{\rm ord}}
\def\lra{{\longrightarrow}}
\def\G{{\Gamma}}

\newcommand{\Gc}{{\mathcal G}}

\newcommand{\bF}{\overline{\mathbb{F}}}

\newcommand{\lla}{\longleftarrow}

\newcommand{\vp}{\varphi}

\newcommand{\mL}{\mathcal{L}}
\newcommand{\mM}{\mathcal{M}}

\newcommand{\g}{\gamma}

\newcommand{\calO}{\mathcal{O}}

\newcommand{\calX}{\mathcal{X}}

\newcommand{\bbF}{\mathbb{F}}

\def\MB{{\rm{MB}}}

\def\deg{{\text{deg}}}

\numberwithin{equation}{section}

\begin{document}
\title[Isogeny graphs associated to Moret-Bailly families of supersingular abelian surfaces]
{Isogeny graphs associated to Moret-Bailly families of supersingular abelian surfaces}

%\date{\today}
\keywords{Moret-Bailly family, supersingular abelian surface, algebraic modular form, 
Sarnak-Xue type theorem.
}
\thanks{}
\subjclass[2010]{}

\author{Toshiyuki Katsura}
\address{Toshiyuki Katsura \\
Graduate School of Mathematical Sciences, The University of Tokyo \\
3-8-1, Komaba, Meguro-ku, Tokyo 153-8914, JAPAN}
\email{tkatsura@g.ecc.u-tokyo.ac.jp}

\author{Takuya Yamauchi}
\address{Takuya Yamauchi \\
Mathematical Institute, Tohoku University\\
 6-3, Aoba, Aramaki, Aoba-Ku, Sendai 980-8578, JAPAN}
\email{takuya.yamauchi.c3@tohoku.ac.jp}

\thanks{Research of the first author is partially supported by JSPS Grant-in-Aid 
for Scientific Research (C) No.26K06737.}

\maketitle

\begin{abstract}
For an odd prime $p$ and any prime $\ell\neq p$, we study finite directed graphs arising from the set of all equivalence classes of Moret-Bailly families of abelian surfaces in characteristic $p$ together with relative $(\ell,\ell)$-isogenies. We relate these graphs to the space of algebraic modular forms on an inner form of $GSp_4/\Q$ that is compact modulo its center and, using the Jacquet-Langlands correspondence, estimate the eigenvalues of their adjacency matrices. We further investigate a Sarnak-Xue type theorem in this setting, providing a first step toward the study of cut-off phenomena for these graphs.
\end{abstract}

\tableofcontents

\section{Introduction}\label{intro}

Isogeny graphs are finite graphs associated with 
supersingular elliptic curves, more generally, principally polarized superspecial abelian varieties over finite fields (cf. \cite{Pizer-Ram}, \cite{KT}, \cite{FS}, \cite{ATY},\cite{JZ} among others). They have attracted attention not only in arithmetic geometry \cite{JZ} ,  
but also in cryptography since the objects consist a building block in a prospective secure encryption scheme \cite{CGL}.

In this paper, we study the isogeny graphs associated to Moret-Baily families whose fibers 
are consisting of principally polarized supersingular abelian surfaces.

In previous works, the vertices typically consist of supersingular elliptic curves or
principally polarized superspecial abelian varieties, whereas in our setting they consist of
families of such objects endowed with additional structure.
A novelty of our framework is that it allows us to apply the theory of Siegel (para-)modular forms of degree $2$ via the Jacquet-Langlands correspondence unconditionally, without imposing any condition at the ramified primes.
As a consequence, we obtain effective estimates for the eigenvalues of the random walk matrices
associated with our graphs.
This stands in contrast to earlier cases, where no direct connection to automorphic forms 
(Jacquet-Langlands type correspondence) was known, except in certain special situations, such as isogeny graphs of supersingular elliptic curves in which case Pizer made a good use of 
elliptic modular forms \cite{Pizer-Ram} via the Jacquet-Langlands correspondence.
For this reason, the authors of \cite{ATY} developed a more elaborate approach, constructing a bridge to the Bruhat--Tits building in order to apply Kazhdan's Property (T).

To go into further explanation we need to fix some notation and 
the details are left to the relevant sections.
Let $p$ be an odd prime and $\ell$ be a prime different from $p$. 
Fix an algebraically closed field $\bF_p$ of the finite field $\bbF_p=\Z/p\Z$. 
Let $\mathcal{S}_{2,\bF_p}$ be the set of all isomorphism classes of 
Moret-Baily families so that each fiber of a Moret-Baily family $q:\mathcal{X}\lra \bf P^1$ 
is a principally polarized supersingular abelian surface over $\bF_p$.  
The cardinality of $\mathcal{S}_{2,\bF_p}$  is known (see (\ref{classnumber})) and its main terms is $\ds\frac{p^2-1}{2880}$ for all $p$ large enough. 

The  $(\ell,\ell)$-isogeny graph $\Gc^{{\rm MB}}(\ell,p)$ for 
$\mathcal{S}_{2,\bF_p}$ is defined as a directed graph where
\begin{itemize}
\item the set of vertices $V:=V(\Gc^{{\rm MB}}(\ell,p))$ is $\mathcal{S}_{2,\bF_p}$ and
\item the edges are $(\ell,\ell)$-isogenies (Section \ref{reliso}). Thus, two vertices $v_1,v_2$ are tied by 
an $(\ell,\ell)$-isogeny $f:v_1\lra v_2$. Two edges $f:v_1\lra v_2$ and  
$f':v_1\lra v_2$ are identifies if their kernels are identified by an element of the reduced automorphism group ${\rm RA}(v_1)$ of $v_1$. 
\end{itemize}
Our graph is a finite directed graph but not necessarily regular because of possible non-triviality of 
the reduced automorphism groups of vertices. However, for each edge $f:v_1\lra v_2$, one can assign 
the weight $w(f):=|{\rm RA}(v_1)/{\rm Stab}_{{\rm RA}(v_1)}({\rm Ker}(f))|$ 
(see (\ref{weight}) for precise definition). 
Then, the weighted sum $\ds\sum_{v'\in V,\ f:v\lra v'}w(f)$ for each vertex $v$ satisfies 
$\ds\sum_{v'\in V,\ f:v\lra v'}w(f)=N_2(\ell):=(\ell^2+1)(\ell+1)$. Here we regard $w(f)=0$ if 
$v,v'$ are not tied.

Let $\ell^2(\Gc^{{\rm MB}}(\ell,p))$ be the finite dimensional Hilbert space 
associated to $\Gc^{{\rm MB}}(\ell,p)$ consisting of all complex valued functions of 
$V:=V(\Gc^{{\rm MB}}(\ell,p))$. 
It endows with the inner product defined by 
$$
\langle F_1,F_2 \rangle=\sum_{v\in V}F_1(v)\overline{F_2(v)}\frac{1}{|{\rm RA}(v)|}.
$$
The associated random walk operator $P:\ell^2(\Gc^{{\rm MB}}(\ell,p))
\lra \ell^2(\Gc^{{\rm MB}}(\ell,p)),\ F\mapsto PF$ for $\Gc^{{\rm MB}}(\ell,p)$ is given by 
$$(PF)(v):=\frac{1}{N_2(\ell)}\sum_{v'\in V,\ f:v'\lra v}F(v')w(f)$$
 which corresponds to a normalized Hecke operator 
 $\frac{1}{N_2(\ell)}T(\ell)$ on the space of algebraic modular forms (see Proposition \ref{comp2}). 
 Therefore, $P$ is self-dual with respect to $\langle \cdot,\cdot \rangle$. This fact is not obvious by definition of $P$, but
 since $w(f)$ satisfies the formula analogue to \cite[(3.1)]{FS}, the operator $P$ is reversible in the sense of 
 \cite[Section 1.6]{LP}. Either way, all eigenvalues of the graph Laplacian $\Delta:={\rm id}-P$
are real and belong to the interval $[0,2]$. These facts for the reversible random matrices follow from \cite[Proposition 4.2, Lemma 4.7, see also the proof of Theorem 4.9]{FS}. 

Our first main result is the following:
\begin{thm}\label{main1}{\rm(}Theorem \ref{refinement} with Proposition \ref{comp2}{\rm)} Keep the notation as above. It holds that 
\begin{enumerate}
\item[{\rm(1)}] The graph $\Gc^{{\rm MB}}(\ell,p)$ is connected and not bipartite, 
\item[{\rm(2)}] Any non-zero eigenvalue $\alpha$ of $\Delta$ satisfies 
$$0.22287\ldots \leq 1-\frac{\ell^2+ \ell +2\ell\sqrt{\ell}}{N_{2}(\ell)}\le 
\alpha \le 
1+\frac{4\ell\sqrt{\ell}}{N_{2}(\ell)} \leq 1.75424\ldots
$$
\end{enumerate}
\end{thm}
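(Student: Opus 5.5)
The strategy is to translate the spectral statement into one about Hecke eigenvalues on algebraic modular forms, then bound those eigenvalues with the Jacquet--Langlands correspondence and the Ramanujan bound for the transferred forms.

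\emph{Reduction to Hecke operators.} By Proposition \ref{comp2}, $\ell^2(\Gc^{\rm MB}(\ell,p))$ with the weighted inner product is identified with a space $M$ of algebraic modular forms on the inner form $G$ of $\GSp_4/\Q$ that is compact modulo center at $\infty$. This space has level given by the stabilizer of the Moret-Bailly data at $p$, i.e.\ a paramodular-type (non-principal) parahoric. Under this identification $P=\frac{1}{N_2(\ell)}T(\ell)$, so the eigenvalues of $\Delta$ are $1-\lambda/N_2(\ell)$ with $\lambda$ running over the eigenvalues of $T(\ell)$ on $M$. Since $T(\ell)$ is self-adjoint, $M$ decomposes into Hecke eigenforms, each generating an automorphic representation $\pi$ of $G(\A)$ that is trivial at $\infty$ (modulo center).

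\emph{One-dimensional constituents.} If $\pi$ is one-dimensional, $\pi=\chi\circ\nu$, then $\lambda=\chi(\ell)N_2(\ell)$ with $\chi(\ell)=\pm1$, and $\chi$ is unramified outside $p$ and quadratic. The constant function gives $\lambda=N_2(\ell)$, i.e.\ the eigenvalue $0$ of $\Delta$. I will use the structure of the level at $p$ to show that $\chi$ must be trivial, or at least that $\chi(\ell)=-1$ cannot occur. The key point is that $G(\Q)$ acts transitively on the components, so strong approximation for $G^1$ together with the surjectivity of the similitude $\nu$ on the level group (up to $\Q^\times_{>0}$) forces $\chi=1$. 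Hence the eigenvalue $0$ is simple, which gives connectedness, and $-N_2(\ell)$ is never an eigenvalue, which gives non-bipartiteness. This proves (1).

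\emph{Infinite-dimensional constituents.} For the remaining $\pi$, apply the Jacquet--Langlands transfer to $\GSp_4$: $\pi$ corresponds to a holomorphic Siegel (para)modular form of weight $3$, cuspidal, with $\pi_\ell$ unramified and equal to the local component of the transfer. By Arthur's classification, the transfer is one of two kinds.
\begin{enumerate}
\item[(a)] General type. Weissauer's Ramanujan bound gives Satake parameters of absolute value $1$ after normalization, hence $|\lambda|\le 4\ell^{3/2}$.
\item[(b)] Saito--Kurokawa or Yoshida type. For Yoshida type Ramanujan still holds. For Saito--Kurokawa type $\{\ell^{\pm1/2}\}\times$(parameters of a weight-$4$ elliptic newform $f$), and the eigenvalue is $\lambda=\ell a_f(\ell)\cdot(\text{normalized}) + \ell^2+\ell$, which is at most $\ell^2+\ell+2\ell^{3/2}$ by Deligne.
\end{enumerate}
Taking the worst cases, $-4\ell\sqrt\ell\le\lambda\le\ell^2+\ell+2\ell\sqrt\ell$, which gives the two middle inequalities in (2). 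The lower Saito--Kurokawa bound $\ell^2+\ell-2\ell^{3/2}$ exceeds $-4\ell^{3/2}$, so the lower bound comes from the generic case. The outer numerical constants are the extreme values over $\ell$, attained at $\ell=2$; for instance $1+8\sqrt2/15\approx1.754$. This is an elementary calculus check that the expressions are monotone in $\ell$.

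\emph{Main obstacle.} The hardest step is pinning down which Arthur packets occur at the given paramodular-type level at $p$. One has to ensure the Jacquet--Langlands correspondence applies unconditionally there, as advertised, and exclude one-dimensional packets twisted by nontrivial $\chi$ as well as the Soudry and Howe--Piatetski-Shapiro types whose $\ell$-eigenvalues could exceed the stated bounds. I would first use the local description at $p$, where $\pi_p$ has fixed vectors under a maximal compact of the compact-mod-center $G(\Q_p)$, to rule these out.
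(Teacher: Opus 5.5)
For part (2) and for non-bipartiteness you follow the paper. Proposition~\ref{comp2} turns $P$ into $\frac{1}{N_2(\ell)}T(\ell)$ on $M(U_{\rm npr})$. Theorem~\ref{IvH} identifies $M(U_{\rm npr})^0$ Hecke-equivariantly with $S_3(K(p))$. The Saito--Kurokawa versus tempered dichotomy then gives $-4\ell\sqrt{\ell}\le\lambda\le\ell^2+\ell+2\ell\sqrt{\ell}$ on $M(U_{\rm npr})^0$, which excludes $-N_2(\ell)$ (Proposition~\ref{evest}).

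Connectedness is where you differ. The paper (Corollary~\ref{lwm}) argues geometrically: the special $1$-complex $\PGSp_4(\Q_\ell)/\PGSp_4(\Z_\ell)$ is connected \cite[Proposition 4.3]{ATY}, so its quotient ${\rm BTQ}^1_2(\ell,p)$ is too, and Theorem~\ref{comparison} transports this. No automorphic input is needed. You argue spectrally, from simplicity of the eigenvalue $1$ of the reversible operator $P$. That works, but only once the bound on all of $M(U_{\rm npr})^0$ is established. The one-dimensional analysis alone does not give simplicity, so your ``This proves (1)'' comes too early. Conversely, once you invoke Theorem~\ref{IvH}, $M(U_{\rm npr})^0$ corresponds to cusp forms, so your $\chi\circ\nu$ discussion is unnecessary. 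It is correct, though, since $\nu(U_{\rm npr})=\widehat{\Z}^\times$.

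The real gap is the packet classification you defer, and the way you propose to do it would not work. The exclusion is not a matter of local structure at $p$; note that $H(\Q_p)$ is not compact modulo its center, only $H(\R)$ is. The Soudry and Howe--Piatetski-Shapiro packets are excluded by the archimedean component, because the transferred form is holomorphic of scalar weight $3$.

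Your list of packets to exclude is also incomplete. A Saito--Kurokawa packet twisted by a nontrivial quadratic character $\sigma$ would give $\lambda=\sigma(\ell)(\ell^2+\ell)+a_\ell(f)$. For $\sigma(\ell)=-1$ this may lie below $-4\ell\sqrt{\ell}$, so the lower bound in (2) needs these ruled out too.

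The paper settles all of this by citation:
\begin{itemize}
\item \cite[Theorem 3.2]{PS} with \cite{Wei} shows every non-CAP eigenform in $S_3(K(p))$ is tempered everywhere.
\item \cite[Theorem 5.3]{Schmidt} shows $S_3(K(p))^{\rm CAP}={\rm SK}(S_4(\G_0(p))^-)$, with the untwisted eigenvalues $\ell^2+\ell+a_\ell(f)$, $|a_\ell(f)|\le 2\ell\sqrt{\ell}$, of (\ref{evSK}). Your Saito--Kurokawa eigenvalue formula should read exactly this.
\end{itemize}
With these inputs in place of your proposed argument at $p$, the proof goes through.
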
 
A key step in the proof of the theorem is to establish an isomorphism between
$\ell^2(\Gc^{{\rm MB}}(\ell,p))$ and the space $M(U_{{\rm npr}})$,
which consists of algebraic modular forms on an inner form of $GSp_4$ which is compact modulo its center. 
Via the Jacquet-Langlands correspondence established by van Hoften~\cite{vH},
the problem is reduced to the study of Siegel paramodular forms of degree~$2$ and of weight 3 
with respect to the paramodular group $K(p)$.
After a bit careful analysis, we prove an asymptotic Ramanujan property:
\begin{thm}\label{asymp-ram} {\rm(}A part of Theorem \ref{refinement} with Proposition \ref{comp2}{\rm)} 
Fix a prime $\ell$. For each odd prime $p\neq \ell$, let $\alpha_{{\rm ave}}(\ell,p)$ be the 
average of all eigenvalues of the graph Laplacian $\Delta$ of $\Gc^{{\rm MB}}(\ell,p)$. 
Then it holds that 
$$0.24575\ldots \leq 1-\frac{4\ell\sqrt{\ell}}{N_2(\ell)}\le 
\liminf_{p\to\infty \atop p\neq \ell}\alpha_{{\rm ave}}(\ell,p)\le 
\limsup_{p\to\infty \atop p\neq \ell}\alpha_{{\rm ave}}(\ell,p)
\le 1+\frac{4\ell\sqrt{\ell}}{N_2(\ell)} \leq 1.75424\ldots.$$
\end{thm}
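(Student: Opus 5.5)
The plan is to transport the question to algebraic modular forms and then use the Jacquet--Langlands correspondence to reach paramodular forms. First, by Proposition \ref{comp2}, $\ell^2(\Gc^{{\rm MB}}(\ell,p))\cong M(U_{{\rm npr}})$ compatibly with inner products, and $P$ corresponds to $\frac{1}{N_2(\ell)}T(\ell)$. The eigenvalues of $\Delta$ are therefore $1-\lambda/N_2(\ell)$, where $\lambda$ runs over the Hecke eigenvalues of $T(\ell)$ on $M(U_{{\rm npr}})$, counted with multiplicity. We then decompose $M(U_{{\rm npr}})$ into automorphic representations $\pi$ of the inner form $G'$ (compact modulo center) with $\pi_f^{U_{{\rm npr}}}\neq 0$. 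The constant functions give the trivial eigenvalue $\lambda=N_2(\ell)$, i.e.\ $\alpha=0$. The remaining forms split into two kinds: one-dimensional (character twists) and infinite-dimensional. Since $\alpha_{{\rm ave}}(\ell,p)$ is the sum of all eigenvalues divided by $d_p:=|\mathcal{S}_{2,\bF_p}|$, we have
$$\alpha_{{\rm ave}}=1-\frac{1}{N_2(\ell)d_p}\sum \lambda ,$$
and it suffices to show that $\frac{1}{d_p}\sum\lambda$ has $\liminf$ and $\limsup$ in $[-4\ell\sqrt\ell,\,4\ell\sqrt\ell]$.

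The next step splits the sum. For forms whose Jacquet--Langlands transfer (van Hoften \cite{vH}) is a non-endoscopic, non-Saito--Kurokawa cuspidal paramodular form of weight $3$ and level $K(p)$ (general type), Weissauer's Ramanujan bound gives $|\lambda|\le 4\ell^{3/2}$ after the normalization matching $T(\ell)$ with the classical Hecke operator at weight $3$. For the non-tempered contributions we bound each $|\lambda|$ crudely by $N_2(\ell)$; these come from characters, Saito--Kurokawa type or Soudry type lifts, and those of Yoshida/endoscopic type where they fail temperedness. The point is that their number is $o(d_p)$. Saito--Kurokawa lifts of level $p$ and weight $3$ come from elliptic newforms of weight $4$ (or weight $2$ in the relevant variant) on $\Gamma_0(p)$, whose number is $O(p)$. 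Characters contribute $O(1)$. Soudry-type and other CAP contributions are likewise $O(p)$, controlled by dimensions of elliptic or Hilbert-type forms of level $p$. Since $d_p\sim (p^2-1)/2880$ by (\ref{classnumber}), all of these contribute $O(p)\cdot N_2(\ell)/d_p\to 0$. Yoshida lifts are tempered, so they also satisfy $|\lambda|\le 4\ell^{3/2}$.

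Combining the two parts gives $\left|\frac{1}{d_p}\sum\lambda\right|\le 4\ell^{3/2}+O(N_2(\ell)/p)$. Taking $\liminf$ and $\limsup$ yields the bounds in Theorem \ref{asymp-ram}. The numerical constants come from evaluating $4\ell\sqrt\ell/N_2(\ell)$ at $\ell=2$, which is the maximum: $8\sqrt2/15\approx0.7542$.

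The main obstacle is the counting step. We need a precise enumeration of which Arthur packets of $G'$ with $U_{{\rm npr}}$-fixed vectors are non-tempered, and an upper bound of order $o(p^2)$ on their multiplicities. This must include checking that the local components at $p$ of these CAP forms actually admit fixed vectors under the non-principal parahoric in only $O(1)$ ways. I would first try to do this via the explicit description of local packets at $p$ for paramodular level, as in Roberts--Schmidt, combined with the known dimension formulas for $S_4(\Gamma_0(p))$.
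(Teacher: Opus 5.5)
Your strategy is the paper's. Proposition \ref{comp2} turns the eigenvalues of $\Delta$ into $1-\lambda/N_2(\ell)$, where $\lambda$ runs over the Hecke eigenvalues of $T(\ell)$ on $M(U_{{\rm npr}})$. Van Hoften's theorem moves $M(U_{{\rm npr}})^0$ to $S_3(K(p))$. Tempered eigenforms satisfy $|\lambda_F(T(\ell))|\le 4\ell\sqrt{\ell}$. The remaining eigenforms number $O(p)$ against $H_2(1,p)\sim p^2/2880$, and each has $|\lambda|\le N_2(\ell)$; the paper uses the sharper bound $\ell^2+\ell+2\ell\sqrt{\ell}$ from (\ref{evSK}). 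Your averaging formula and the evaluation of the constants at $\ell=2$ are correct.

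The gap is the step you yourself call the main obstacle. As written, the bound on the number of non-tempered eigenforms is not established, and the claim that Soudry-type and other CAP contributions are $O(p)$ is unsupported. However, you do not need to enumerate Arthur packets of the inner form or study fixed vectors under the non-principal parahoric at $p$.

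\begin{itemize}
\item Theorem \ref{IvH} is a $\mathbb{T}^{(p)}$-equivariant linear isomorphism of the whole spaces, $S_3(K(p))\simeq M(U_{{\rm npr}})^0$. So Hecke eigensystems and their multiplicities can be counted entirely on the paramodular side.
\item There, by Schmidt, every CAP eigenform in $S_3(K(p))$ is of Saito--Kurokawa type, and ${\rm SK}:S_4(\Gamma_0(p))^-\to S_3(K(p))^{{\rm CAP}}$ is an isomorphism. Hence there are exactly $\dim S_4(\Gamma_0(p))^-\le (p+21)/4$ non-tempered eigenforms, as in (\ref{SKest}).
\item Soudry-type forms do not occur. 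Characters need no separate treatment, because $M(U_{{\rm npr}})^0$ is identified with a space of cusp forms.
\item Pitale--Schmidt combined with Weissauer gives temperedness at every place for every eigenform in $S_3(K(p))^{{\rm temp}}$. This covers general-type and Yoshida-type forms at once.
\end{itemize}

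Finally, the ``weight $2$ variant'' you mention is spurious. Weight-$3$ paramodular Saito--Kurokawa lifts come only from weight-$4$ newforms with Atkin--Lehner sign $-1$.
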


As a byproduct in proving the above theorem, we prove a Sarnak-Xue type density hypothesis (cf. \cite{EGGG}), which heuristically explains
that the sum of multiplicity of non-tempered unramified representations of $\PGSp_4(\Q_\ell)$ occurring in
\[
L^2\!\left(\G_{{\rm npr}}^{\PGSp_4}(p)\backslash \PGSp_4(\Q_\ell)\right)^{\PGSp_4(\Z_\ell)}
\]
have density zero as $p \to \infty$. More concretely, among the forms occurring there, those which do not satisfy the Ramanujan conjecture form a minority.
Since the notation involved is rather technical, we refer the reader to Section~\ref{MCSX}
for precise definitions.

\begin{thm}\label{mainSX}{\rm (Sarnak--Xue type density hypothesis, Theorem~\ref{SX})}
For any real number $q>0$, it holds that 
\[
\lim_{p\to\infty}\frac{M(\Pi_{{\rm sph}},\G_{{\rm npr}}^{\PGSp_4}(p),q)}
{\dim\!\left(M(U_{{\rm npr}})\right)^{\frac{2}{q}}}
=0
\]
\end{thm}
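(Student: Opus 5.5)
The plan is to follow the standard Sarnak--Xue strategy: bound a high moment of a Hecke operator in two ways, once spectrally and once geometrically by counting lattice points. For a non-tempered spherical representation $\pi$ of $\PGSp_4(\Q_\ell)$ occurring in $L^2(\G_{{\rm npr}}^{\PGSp_4}(p)\backslash \PGSp_4(\Q_\ell))^{\PGSp_4(\Z_\ell)}$, define $q(\pi)$ as the infimum of the $q'$ for which its spherical matrix coefficient lies in $L^{q'}$. Then $M(\Pi_{{\rm sph}},\G,q)$ counts, with multiplicity, the $\pi$ with $q(\pi)\ge q$.

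Via the isomorphism with $M(U_{{\rm npr}})$ (Proposition \ref{comp2}) and van Hoften's Jacquet--Langlands correspondence, each such $\pi$ is the $\ell$-component of a non-CAP/non-endoscopic form, in which case it is tempered by Weissauer, or of a Saito--Kurokawa/Yoshida/Soudry-type lift. So the only non-tempered $\pi$ come from lifts, or, for the trivial representation, from constants. The trivial representation has $q=\infty$ but multiplicity $1$, so it is negligible. Saito--Kurokawa type lifts have Satake parameters $\{\alpha,\alpha^{-1},\ell^{\pm1/2}\}$ with $\alpha$ tempered, which gives a fixed finite $q_{\rm SK}$ (one computes $q_{\rm SK}=4$ from the decay rate of spherical functions). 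Their number is bounded by a dimension of elliptic newforms of weight $4$ (or $2$) and level $p$, which is $O(p)$. Meanwhile $\dim M(U_{{\rm npr}})\asymp p^2/2880$ by (\ref{classnumber}).

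The proof then splits into ranges of $q$. For $q>4$ (or whatever the maximal finite $q$ of a non-trivial non-tempered type is), only the trivial representation contributes, so $M=1=o(p^{4/q})$. For $q\le 4$, we have $M=O(p)$ and need $p=o(p^{4/q})$, which holds exactly when $q<4$. At $q=4$ the explicit count alone is not enough, and we instead prove a bound $O(p^{2\cdot 2/q-\delta})$ by the moment method. Take $F\in\ell^2$ an eigenform of $T(\ell^k)$ and use
\[
\sum_{\pi} m(\pi)\,|\lambda_{\ell^k}(\pi)|^2 \le \#\{\gamma\in\G\ \text{of norm}\le \ell^{2k}\ \text{fixing a vertex}\}\cdot(\text{vertices}).
\]
Count on the right side the endomorphisms of norm $\ell^{k}$ of superspecial abelian surfaces in a Moret--Bailly family. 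For $k$ chosen with $\ell^k\approx p^{\epsilon}$ this count is about $\dim\cdot(\text{trivial term }|T(\ell^k)|)$, plus an off-diagonal term that is small because elements of the maximal order of $B_{p,\infty}$-matrices of small norm are scarce. The left side is bounded below by $M\cdot \ell^{2k(1-2/q)}|T(\ell^k)|^2$-type growth.

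I expect the main obstacle to be the ramified structure at $p$, which comes from $U_{{\rm npr}}$ being non-principal. It requires verifying that the $\ell$-components of all automorphic constituents are as described, including CAP forms attached to the paramodular new level. It also requires either checking that the Saito--Kurokawa count really is $O(p)$, which I will try first using known dimension formulas for $K(p)$ of weight $3$, or carrying out the lattice-point count in the division quaternion algebra over $\Q$ ramified at $p,\infty$.
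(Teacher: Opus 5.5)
\textbf{The local input is wrong.} You assert $q_{\rm SK}=4$, but for the Saito--Kurokawa component $\tau=\chi 1_{\GL(2)}\rtimes\sigma$ (type IIb) the correct value is $q(\tau)=3$. The paper obtains this from Casselman's criterion. Take the four exponents of $(\tau_U)^{\rm ss}$ and evaluate them on the generators $t_1=\diag(\ell,\ell,1,1)$ and $t_2=\diag(\ell^2,\ell,\ell,1)$ of $T(\Q_\ell)^-$ modulo $T(\Z_\ell)Z$, where $\delta_B(t_1)=\ell^{-3}$ and $\delta_B(t_2)=\ell^{-4}$. The exponents have absolute value at most $\ell^{1/2}$, with equality at $t_1$. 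The binding test $\tfrac12-\tfrac32+\tfrac3q<0$ at $t_1$ gives exactly $q>3$; the test at $t_2$ only needs $q>8/3$.

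Equivalently, the Satake parameter $\{\alpha^{\pm1},\ell^{\pm1/2}\}$ lies in the dual group $\Sp_4(\C)$, so its real part is $\nu=(\tfrac12,0)$. The roots of $\PGSp_4$ are of type $B_2$ with $\rho=(\tfrac32,\tfrac12)$, which gives $q(\tau)=\max\bigl(3/1,\,4/\tfrac32\bigr)=3$. Using $(\tfrac12,\tfrac12)$, the parameter in the $5$-dimensional representation, is what produces $4$. As a check, $\lambda_{{\rm SK}(f)}(T(\ell))$ has size $\ell^2\approx N_2(\ell)^{1-1/3}$, while tempered eigenvalues are $O(\ell^{3/2})\approx N_2(\ell)^{1-1/2}$.

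\textbf{Why this is fatal to your plan at $q=4$.} If $q_{\rm SK}$ really were $4$, then $M(\Pi_{\rm sph},\G_{\rm npr}(p),4)$ would equal the number of Saito--Kurokawa lifts, namely $\dim S_4(\G_0(p))^-\sim p/8$. This grows linearly (see Table \ref{table2}). The denominator $\dim(M(U_{\rm npr}))^{1/2}$ is only $\sim p/\sqrt{2880}$. The ratio would then tend to a positive constant, and the theorem would be false at $q=4$. Your moment/lattice-point bound $O(p^{1-\delta})$ would therefore be trying to beat an exact multiplicity, which no upper-bound method can do.

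\textbf{With the correct value, no moment method is needed.} The argument becomes the paper's:
\begin{enumerate}
\item By van Hoften's theorem and Pitale--Schmidt with Weissauer, every spherical $\ell$-component occurring in $M(U_{\rm npr})^0$ is either tempered of type I ($q=2$) or of type IIb ($q=3$).
\item For $2<q\le3$, one has $M=\dim S_4(\G_0(p))^-=O(p)$ by (\ref{SKest}). This is $o(p^{4/q})$ because $4/q\ge 4/3>1$.
\item For $q>3$, at most the trivial representation survives, with multiplicity $\le 1$.
\end{enumerate}

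\textbf{The range $q\le 2$.} Your range analysis omits it. Tempered spherical representations have $q(\tau)=2$, so for $q\le2$ the count $M$ includes every representation and is $\sim p^2/2880$, not $O(p)$. For $q<2$ the limit is still $0$, since $2/q>1$. At $q=2$, however, the ratio tends to $1$. Part (1) of Theorem \ref{SX} itself gives $M=H_2(1,p)-1$ against a denominator $H_2(1,p)$. So the statement as written fails at $q=2$ and should be restricted to $q\neq2$, in practice to $q>2$, where it has content.
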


This paper is organized as follows.
In Section~\ref{RIGMBF}, we give a detailed definition of Moret--Bailly families and
$(\ell,\ell)$-isogenies.
An adelic description of the isomorphism classes of Moret--Bailly families is then
presented in Section~\ref{AdelicDesc}.
One of the main technical difficulties is to relate the stabilizer of a non-principal
lattice to the unit groups of the endomorphism rings of Moret--Bailly families.

In Section~\ref{AMFs}, we discuss the connection between algebraic modular forms and
Siegel paramodular forms of degree~$2$ via the Jacquet--Langlands correspondence
proved by van Hoften~\cite{vH}.
We then translate results on Siegel paramodular forms into statements about
$\ell^2(\Gc^{{\rm MB}}(\ell,p))$ using the theory of algebraic modular forms and the theory of 
Bruhat-Tits buildings. 
Combining these results with the local representation theory of $\PGSp_4(\Q_\ell)$,
we establish a Sarnak-Xue type hypothesis in Section~\ref{MCSX}.
This result will be applied to the study of cut-off phenomena, which is left for
future work of the second author with a collaborator.

\subsection*{Acknowledgements}
The first author would like to thank Professor T. Ibukiyama for his valuable comments.
This article grew out of discussions following a talk by the second author
at the University of Tokyo in November 2023.
We thank the University of Tokyo for their hospitality.

\subsection{Notations}\label{notation}
For a set $X$, the cardinality is denoted by $|X|$.
Throughout the paper, we use the Landau asymptotic notations: for positive real-valued functions $f(n)$ and $g(n)$ for integers $n$,
we denote 
%by $f(n)=o(g(n))$ if $f(n)/g(n) \to 0$ as $n \to \infty$,
by $f(n)=O(g(n))$ if there exists a positive constant $C>0$ such that $f(n) \le C g(n)$ for all large enough $n$.

Let $I_m$ be the identity matrix of size $m$. 
Put 
$
J=
\begin{pmatrix}
0 & I_m \\
-I_m & 0
\end{pmatrix}
.
$
We define the symplectic similitude group scheme $GSp_{2m}$ over $\Z$ 
by 
$$\GSp_{2m}(R):=\{M\in M_{2m}(R)\ |\ {}^t M J M=\nu(M)J,\ \text{for some }\nu(M)\in R^\times\}
$$
for each commutative ring $R$ and we call $\nu(M)$ the similitude of $M$. 
The similitude defines a homomorphism 
$\nu:GSp_{2m}\lra GL_1$ of group schemes over $\Z$. We define  
$Sp_{2m}:={\rm Ker}(\nu)$ which is called the symplectic group of rank $m$. 
The similitude splits and in fact it is given by 
$
a\mapsto\diag(I_m,a I_m)
$.
%$a\mapsto \diag(I_m,a I_m)$. 
It follows from this that $G=GSp_{2m}\simeq Sp_{2m} \rtimes GL_1$. 
We are mainly interested in $G=GSp_4$. 

For any algebraic group $H$ over  a field, we denote by $Z_{H}$ 
the center of $H$.

\section{Relative isogeny graphs of Moret-Bailly families}\label{RIGMBF}
In this section, we introduce Moret-Bailly families \cite{MB} and relative isogenies
between them. Using these notions, we construct relative isogeny graphs.
To this end, we first review the relevant results on genus theory, divisors
on the superspecial abelian varieties and their interrelations.
We also provide a survey of the loci of superspecial and supersingular abelian varieties 
in the moduli space of principally polarized abelian varieties (we refer to \cite{Oort}, \cite{LO},\cite{KO},\cite{KO2}). 

\subsection{The genus theory}
In this subsection, we recall the genus theory of quaternion algebra
(for details, see Shimura \cite{Shimura} and Hashimoto-Ibukiyama \cite{HI}).

Let $p$ be a prime number, and
$B$ be a definite quaternion algebra over the field of rational numbers $\Q$
with discriminant $p$. Denote by  $~\bar{}~$  the canonical involution of $B$. 
Let $\mathcal{O}$ be a maximal order in $B$.

We consider the left $B$-vector space $B^n$ (consisting of row vectors) equipped with
a definite quaternionic Hermitian form. It is well known that, after a suitable change of basis, such a Hermitian form 
can be written as
$$
     h(x, y) = \sum_{i= 1}^{n} x_i\bar{y}_i\quad (x=(x_1, \ldots, x_n), y=(y_1,\ldots, y_n)\in B^n).
$$
For each place $v$ of $\Q$, 
let $\Q_v$ denote the completion of $\Q$ at $v$.  We then set
$$
        B_v = B \otimes_\Q\Q_v,\ 
        \mathcal{O}_v = \mathcal{O} \otimes_\Z\Z_v
$$
For a commutative ring $R$, we extend the conjugation on $\mathcal{O}$ to 
$\calO \otimes_\Z R$ by $\overline{x\otimes r}:=\overline{x}\otimes r$ for each $x\in \mathcal{O}$ and 
$r\in R$. Further, for each $\g=(\g_{ij})\in M_n(\mathcal{O}\otimes_\Z R)$,  we define $\overline{\g}:=
(\overline{\g}_{ij})$. 
We define the algebraic group $G_n$ over $\Z$ which represents the following functor  
from the category of rings to the category of sets: 
$$\underline{G}_n:(Rings)\lra (Sets),\ R\mapsto\underline{G}_n(R):=
\{\g\in M_n(\mathcal{O}\otimes_\Z R)\ |\ \g\cdot \overline{\g}^t=\nu(\g)I_n\ \text{for some $\nu(\g)\in R^\times$} \}$$
where $I_n$ stands for the identity matrix of size $n$. 
The algebraic group scheme $G_n$ is called the algebraic group scheme of similitudes for 
$(\mathcal{O}^n, h)$. The character $\nu:G_n\lra GL_1,\ g\mapsto \nu(g)$ is called the similitude 
character. We sometimes write $G_n=GUSp_n$ as in \cite{ATY}. 
%(resp. $(B_v^n, h)$) is defined by
%$$
%\begin{array}{rl}
%G_n \quad  &= \{g \in M_n(B) \mid g\bar{g}^{t} = \lambda (g)I_n , \lambda (g) \in \Q^{*}\}\\
%(\mbox{resp.}~G_{n,v} &= \{g \in M_n(B_v) \mid g\bar{g}^{t} = \lambda (g)I_n , \lambda (g) \in \Q_v^{*}\}).
%\end{array}
%$$
%Here, $I_n$ denotes the identity matrix, and $\bar{g}^{t}$ is the tranpose of 
%the matrix $\bar{g}$.

\begin{Def}
Let $L \subset B^n$ be a left ${\mathcal O}$-module. If $L$ is 
also a $\Z$-lattice, then we call $L$ a left ${\mathcal O}$-lattice. 
\end{Def}

\begin{Def}
Let $L_1, L_2 \subset B^n$ be left ${\mathcal O}$-lattices. We say
$L_1$ is globally equivalent (resp. locally equivalent at a prime $q$) to $L_2$
if there exists $g \in G_n(\Q)$ $($resp. $g \in G(\Q_q)$$)$ such that 
$$
L_1g = L_2 \quad (resp. (L_1 \otimes_\Z\Z_{q})g = (L_2 \otimes_\Z\Z_{q})).
$$
In this case, we write $L_1 \sim L_2$.
\end{Def}

\begin{Def}
For a left ${\mathcal O}$-lattice $L$, the two-sided ideal generated by
$\{h(x,y) \mid x, y \in L\}$ is called the norm of $L$.
A maximal element among left ${\mathcal O}$-lattices
with the same norm is called a maximal left ${\mathcal O}$-lattice.
\end{Def}

Let ${\mathcal L}_n({\mathcal O})$ denote the set of maximal ${\mathcal O}$-lattices.
The following lemma is due to Shimura \cite[Propositions 2.10 and 3.5]{Shimura}
(also see \cite[(II) Section 1]{HI}).

\begin{lem}\label{Shimura} 
\begin{itemize}
\item[$({\rm i})$] If $B_q = M_2(\Q_q)$ for $q \neq p$, then every maximal 
${\mathcal O}_q$-lattice in $B_q^n$ is locally eqivalent to ${\mathcal O}_q^n$.
\item[$({\rm ii})$] If $B_p$ is a division algebra, then every maximal 
${\mathcal O}_p$-lattice in $B_p^n$ is locally eqivalent to either ${\mathcal O}_p^n$
or $N_p$.
\end{itemize}
\end{lem}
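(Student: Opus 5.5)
We prove Lemma \ref{Shimura} by treating $B_q^n$ as a non-degenerate hermitian space over a local quaternion algebra and classifying maximal lattices up to similitude, with separate arguments in the split and ramified cases.

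\emph{Case (i): $q\neq p$, $B_q=M_2(\Q_q)$.} We use Morita equivalence. Put $e=\begin{pmatrix}1&0\\0&0\end{pmatrix}$, so $\mathcal{O}_q=M_2(\Z_q)$ after conjugation. A left $\mathcal{O}_q$-lattice $L\subset B_q^n$ is then determined by the $\Z_q$-lattice $eL\subset eB_q^n\simeq \Q_q^{2n}$.

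The hermitian form $h$ induces an alternating form on $\Q_q^{2n}$. Concretely, $h(x,y)\in M_2(\Q_q)$, and composing with the canonical involution gives $\bar{y}=J\,{}^t y J^{-1}$. Hence $eh(x,y)\bar e$ is governed by the entry $\psi(ex,ey)=(x\bar{y})_{12}$, which is alternating. Under this dictionary $G_n(\Q_q)\simeq \GSp_{2n}(\Q_q)$, and the norm of $L$ corresponds to the ideal $q^a\Z_q$ generated by the values of $\psi$ on $eL$.

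We then check that $L$ is maximal of norm $q^a$ if and only if $eL$ is maximal among lattices on which $\psi$ takes values in $q^a\Z_q$. By the elementary divisor theorem for alternating forms, such a maximal lattice has a symplectic basis with $\psi$-Gram matrix $q^aJ$; otherwise some elementary divisor exceeds $q^a$ and the lattice can be enlarged. A similitude with multiplier $q^{-a}$ then carries $eL$ to $\Z_q^{2n}$, and hence carries $L$ to $\mathcal{O}_q^n$.

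\emph{Case (ii): $B_p$ a division algebra.} Here $\mathcal{O}_p$ is the unique maximal order, a noncommutative DVR with uniformizer $\pi$, $\pi^2=p\cdot$unit and $\pi\bar\pi=p$ up to a unit. Every two-sided ideal is $\pi^k\mathcal{O}_p$. Scaling by $g=\pi I_n$ multiplies the norm by $p$, so we may assume the norm is $\mathcal{O}_p$ or $\pi\mathcal{O}_p$.

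Next we use the Jacobowitz-type splitting for hermitian lattices over a local ring with involution. A lattice $L$ of norm $\pi^k\mathcal{O}_p$ decomposes orthogonally into rank-one pieces $\mathcal{O}_p x$ with $h(x,x)$ of valuation $k$, and rank-two hyperbolic-type pieces $\begin{pmatrix}0&\pi^k\\ \bar\pi^k&0\end{pmatrix}$. To get this, pick $x,y$ attaining the norm. If $h(x,x)$ attains it, split off the line $\mathcal{O}_p x$; otherwise $h(x,y)$ attains it, and we split off the plane. The diagonal entries $h(x,x)$ are symmetric, so they lie in $\Q_p$ and have even valuation in $\pi$.

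Maximality then forces a rigid shape. If the norm is $\mathcal{O}_p$, every rank-one piece can be normalized to $h(x,x)=1$ by the surjectivity of the reduced norm on units. A plane of type $\begin{pmatrix}0&1\\1&0\end{pmatrix}$ contains $x+y$ with $h=2$, a unit since $p$ is odd, so it diagonalizes. Thus $L\sim\mathcal{O}_p^n$. If the norm is $\pi\mathcal{O}_p$ (odd valuation), no diagonal entry can attain the norm, so $L$ is an orthogonal sum of planes. Any two such planes are isometric up to units, and $L$ is equivalent to one fixed lattice; this is $N_p$, for $n$ even.

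\emph{Main obstacle.} The hardest step is to verify, in each case, that the candidate normal form really is maximal and that a non-normal-form lattice can actually be enlarged. In the ramified case one must also check that every lattice of the given norm sits inside a maximal one of that shape. We would handle this with the dual-lattice criterion: $L$ is maximal of norm $\mathfrak{N}$ exactly when $L^{\#}\mathfrak{N}/L$ contains no nonzero isotropic vector modulo the norm. This reduces the enlargement question to a quadratic or hermitian form over the residue field $\mathbb{F}_{p^2}$ or $\mathbb{F}_q$, where isotropy can be checked directly. This is essentially Shimura's argument in \cite{Shimura}, which we could ultimately cite.
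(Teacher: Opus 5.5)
\textbf{Gap in part (ii).} The paper does not prove this lemma; it quotes Shimura \cite{Shimura} (Propositions 2.10 and 3.5), so your argument has to stand on its own. Part (i) does: the Morita reduction to the alternating $\Z_q$-lattice $(eL,\psi)$, the identification $G_n(\Q_q)\simeq\GSp_{2n}(\Q_q)$, and the elementary divisor theorem give the claim. The gap is in (ii).

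Your splitting statement is false as written. Splitting off one line or plane that attains the norm $\pi^k\mathcal{O}_p$ leaves an orthogonal complement whose norm may be strictly smaller. Iterating therefore gives a Jordan decomposition with components of several scales $\pi^k,\pi^{k+1},\dots$; for example, $\langle 1\rangle\perp\langle p\rangle$ has norm $\mathcal{O}_p$. Consequently your conclusions ``every rank-one piece has unit $h(x,x)$'' (norm $\mathcal{O}_p$) and ``$L$ is an orthogonal sum of planes'' (norm $\pi\mathcal{O}_p$) are exactly where maximality must be used, and you never use it.

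Worse, for odd $n$ the second conclusion is impossible by parity, and you only reach $N_p$ ``for $n$ even''. The lemma is for all $n$, and for odd $n$ the lattice $N_p$ is not a sum of planes. Its Gram matrix $D\xi\bar\xi^t\bar D^t$, with $D=\diag(I_r,\pi I_{n-r})$, has middle entry $\pi\bar\pi$. So $N_p$ is $r$ $\pi$-modular planes $\perp$ one line of scale $p$.

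\textbf{What is missing} is the maximality analysis of a genuine Jordan splitting.
\begin{enumerate}
\item[(a)] A component of scale contained in $\pi^{k+2}\mathcal{O}_p$ can be replaced by $\pi^{-1}$ times itself without leaving norm $\pi^k\mathcal{O}_p$. Hence a maximal $L$ has only components of scale $\pi^k$ and $\pi^{k+1}$.
\item[(b)] For $k=0$, a $\pi$-modular plane on $x,y$ enlarges to $\mathcal{O}_p x+\mathcal{O}_p\pi^{-1}y$, whose values stay in $\mathcal{O}_p$. So a maximal $L$ is unimodular, and only then does your diagonalization give $L\sim\mathcal{O}_p^n$.
\item[(c)] For $k=1$, the scale-$\pi^2$ part is a sum of lines $\langle pu_i\rangle$ with $u_i\in\Z_p^\times$. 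If there are two, on $x_1,x_2$, pick $a\in\mathcal{O}_p^\times$ with $\mathrm{Nrd}(a)=-u_2/u_1$ and set $z=\pi^{-1}(ax_1+x_2)$. Then $h(z,z)=0$ and $h(z,x_i)\in\pi\mathcal{O}_p$, so $L+\mathcal{O}_p z\supsetneq L$ still has norm $\pi\mathcal{O}_p$.
\end{enumerate}
Thus a maximal $L$ of norm $\pi\mathcal{O}_p$ consists of $\lfloor n/2\rfloor$ $\pi$-modular planes and, for odd $n$, exactly one line of scale $p$. Together with uniqueness up to isometry of $\pi$-modular planes and of lines $\langle pu\rangle$, this gives $L\cong N_p$ for every $n$.

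This is the concrete content of the ``isotropy over the residue field'' you mention in your last paragraph. As written, your text reaches the wrong conclusion for odd $n$ instead of carrying that argument out.

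A minor point: your step $h(x+y,x+y)=2$ needs $p$ odd, whereas the lemma is stated for every $p$. For $p=2$, choose $c$ so that $u\bar c$ has unit reduced trace, e.g.\ a primitive cube root of unity in $\mathcal{O}_2$.
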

Here, $N_p$ is defined as follows. Let $r = [\frac{n}{2}]$, the integral part
of $\frac{n}{2}$, and let $\pi$ be a prime element of ${\mathcal O}_p$.
Define a matrix $\xi \in GL_n(B_p)$ satisfying
$$
    \xi \bar{\xi}^{t} =
    \left(
\begin{array}{ccccc}
    0 &  0 &  \ldots  & 0 &1 \\
    0 &  0 &  \ldots  & 1 &0 \\
      &    &   \ldots   &   &   \\
    0 &  1 &  \ldots  & 0 &0 \\
    1 &  0 &  \ldots  & 0 &0 \\
\end{array}
\right).
$$
Then set
\begin{equation}\label{Np}
   N_p = {\mathcal O}_p^n 
\left(
 \begin{array}{cc}
     I_r &  0  \\
    0 &  \pi I_{n-r} 
\end{array}
\right)
\xi.
\end{equation}
Using this notation, define
$$
\begin{array}{l}
   {\mathcal L}_n(p, 1) = \{\mbox{maximal}~ {\mathcal O}\mbox{-lattice}~L \subset B^n
   \vert L_q \sim {\mathcal O}_q^n~\mbox{for all primes}~ q  \}\\
   {\mathcal L}_n(1, p) = \{\mbox{maximal}~ {\mathcal O}\mbox{-lattice}~L \subset B^n
   \vert L_q \sim {\mathcal O}_q^n~~\mbox{for all}~q \neq p 
   ~\mbox{and}~L_p \sim N_p \}.
\end{array}
$$
Then we have
$$
     {\mathcal L}_n({\mathcal O}) = {\mathcal L}_n(p, 1)\cup {\mathcal L}_n(1, p).
$$
We call ${\mathcal L}_n(p, 1)$ the principal genus and ${\mathcal L}_n(1, p)$
 the non-principal genus. Define the class numbers as:
$$
\begin{array}{l}
H_n(p, 1) = \vert {\mathcal L}_n(p, 1)/\mbox{global equivalence}\vert.  \\
H_n(1, p) = \vert {\mathcal L}_n(1, p)/\mbox{global equivalence}\vert. 
\end{array}
$$     
We call $H_n(p, 1)$ the class number of principal genus and 
$H_n(1, p)$ the class number of non-principal genus.

\subsection{Abelian varieties}\label{1.2}
Let $k$ be an algebraically closed field of charactersitic $p$,
and $A$ be an abelian variety of dimension $g$.
For a positive integer $n$, $[n]_A : A \longrightarrow A$
is the morphism defined by $x \mapsto nx$  for $x \in A$.
We set $A[n]= {\rm Ker}~ [n]_A$.
We denote by $\widehat{A} = {\rm Pic}^0(A)$ the dual abelian variety of $A$.
Then, for an invertible sheaf (or a divisor) $\mL$ on $A$, we have a homomorphism
$$
\begin{array}{rccc}
   \varphi_\mL :&  A  & \longrightarrow & {\widehat{A}} \\
         &     x & \mapsto    & T_x^*\mL - \mL.
\end{array}
$$
Here, $T_x$ is the translation by $x \in A$.
We set $K(\mL) = {\rm Ker}~\varphi_\mL$.

An abelian variety $A$ of dimension $n\geq 2$ is
said to be supersingular if $A$ is isogenous to a  product of $n$ supersingular elliptic curves,
and $A$ is said to be superspecial if $A$ is isomorphic to a  product of $n$ supersingular elliptic curves.
These notions don't depend on the choice of supersingular elliptic curves. Moreover, 
any two products of $n$ supersingular elliptic curves are isomorphic 
(for instance,  Shioda \cite[Theorem 3.5]{S}).
Therefore, if you take a supersingular elliptic curve $E$, the superspecial abelian variety of dimension $n$ 
($n \geq 2$) is uniquely given by $E^n$.

The following proposition is probably well known, but  we could not find a suitable reference,
so we include a proof.

\begin{prop}\label{rational-morphism}
Let $C$ be a non-singular algebraic curve. Let $\varphi : X \longrightarrow C$ and 
$\psi : Y \longrightarrow C$ be abelian schemes, and $f : X \longrightarrow Y$ be
a relative rational map over $C$.  Then, $f$ is a morphism.
\end{prop}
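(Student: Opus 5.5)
We prove the claim by reducing it to Weil's extension theorem for rational maps into group schemes (equivalently, the Néron property of abelian schemes over a Dedekind base). Throughout, $X$ is regular and $\psi : Y \to C$ is smooth and separated with group structure. $X$ is regular because it is smooth over the non-singular curve $C$.

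\emph{Step 1: a domain of definition.} The map $f$ is defined on a dense open $U \subset X$. Since $X$ is normal and $Y \to C$ is proper, the valuative criterion shows that the indeterminacy locus $Z = X \setminus U$ has codimension $\geq 2$. Indeed, at a codimension-one point the local ring of $X$ is a DVR dominating $C$, and properness of $Y/C$ extends $f$ over it.

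\emph{Step 2: Weil's argument.} Define the rational map
$$\Phi : X \times_C X \dashrightarrow Y,\qquad (x,y) \mapsto f(x) - f(y),$$
using the subtraction on the abelian scheme $Y/C$. Fix a point $x_0 \in X$ lying over $c \in C$. Then $f$ is defined at $x_0$ if and only if $\Phi$ is defined at $(x_0, x_0)$, and in that case $\Phi(x_0,x_0) = 0$. The nontrivial direction is the converse. Choose a general point $y$ in the fiber $X_c$ such that $f$ is defined at $y$; this is possible because $Z$ does not contain a whole fiber. Then write
$$f = \Phi(\cdot, y) + f(y)$$
near $x_0$.

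Weil's lemma then says the following. A rational map from a smooth (hence regular) $C$-scheme $V$ into a smooth separated $C$-group scheme is defined everywhere if its indeterminacy locus has codimension $\geq 2$. The proof goes by writing $\Phi$ in local coordinates via étale charts of $Y$ around the zero section. If $\Phi$ were undefined along some locus, then near the diagonal the indeterminacy locus of $\Phi$ would be a divisor. This divisor arises from the pullback of a pole divisor under the difference of the two projections. But the diagonal is contained in the domain of definition, because $\Phi$ restricts to the zero section there (on the open set where $f$ is defined, $\Phi$ vanishes on the diagonal). Extending by codimension-two purity gives the result. This is exactly Bosch--Lütkebohmert--Raynaud, \emph{Néron Models}, Section 4.4, Theorem 1 (Weil extension theorem): a $C$-rational map from a smooth $C$-scheme to a smooth separated $C$-group scheme which is defined in codimension $\leq 1$ is a morphism.

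Given Step 1, the theorem applies directly, and we would simply invoke it.

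\emph{Alternative (Néron property).} Since $Y$ is an abelian scheme over the Dedekind scheme $C$, it is the Néron model of its generic fiber. The Néron mapping property extends $C$-morphisms from smooth $C$-schemes defined on the generic fiber. Restrict $f$ to the generic fiber $X_\eta$. A rational map between abelian varieties over a field is a morphism, so $f_\eta$ is defined on all of $X_\eta$. The Néron property then extends $f_\eta$ uniquely to a morphism $X \to Y$ over $C$. This morphism agrees with $f$ as a rational map, hence $f$ is a morphism.

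The field case itself is the classical fact that a rational map from a smooth variety to an abelian variety is a morphism (Milne, \emph{Abelian Varieties}, Theorem 3.1); it follows from the same Weil argument combined with the fact that abelian varieties contain no rational curves.

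\textbf{Main obstacle.} Justifying the extension step, either Weil's codimension-two argument or the Néron mapping property, in this relative setting. I would cite BLR rather than reprove it, after checking its hypotheses: smoothness of $X/C$, and smoothness and separatedness of $Y/C$.
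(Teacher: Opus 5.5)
Your proof is correct, but after the common first step it takes a different route from the paper. Both arguments begin by showing that the indeterminacy locus has codimension $\ge 2$, so it contains no fiber.

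\emph{The paper's route.} It then argues fiber by fiber. By Lang's theorem each $f|_{X_a}$ extends to a morphism $X_a\to Y_a$. Using the closure $\Gamma$ of the graph in $X\times_C Y$, it then asserts that $\gamma^{-1}(X_a)\to X_a$ is an isomorphism.

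\emph{Your route.} You apply the relative Weil extension theorem (BLR 4.4/1) to $f$ itself, or the N\'eron mapping property of the abelian scheme $Y/C$ to $f_\eta$.

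\emph{What your route buys.} It gives rigor at exactly the delicate point of the paper's argument, because fiberwise extendability alone does not make $f$ defined along $X_a$. Take $C=\mathbb{A}^1_t$ and the map $(x,t)\mapsto([x:t],t)$ from $\mathbb{A}^1\times C$ to $\mathbb{P}^1\times C$. Every fiberwise restriction extends to a morphism, yet the map is undefined at $(0,0)$, where the closure of its graph has the whole $\mathbb{P}^1$ as fiber. Ruling out positive-dimensional fibers of $\gamma$ therefore needs an input specific to abelian schemes. One option is Weil's argument with the group law. Another is the absence of rational curves on $Y_a$, combined with the fact that such fibers over the smooth variety $X$ would contain rational curves. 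The paper leaves this implicit, while BLR supplies it in the relative setting. The price is heavier machinery than the paper's appeal to the classical field-level theorem.

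Your N\'eron variant also gives Corollary~\ref{endo} at once, since that corollary is essentially the N\'eron mapping property applied to the smooth $C$-scheme $X$.

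Two minor points:
\begin{enumerate}
\item In your hypothesis check for BLR 4.4/1, also state that the domain of definition is $C$-dense; this is built into BLR's notion of a $C$-rational map. It follows from Step 1, because each $X_c$ is an irreducible divisor.
\item In your heuristic, a single point $y\in X_c$ makes $f=\Phi(\cdot,y)+f(y)$ meaningful only on the fiber $X_c$, not on a neighborhood of $x_0$ in $X$. For that you would need a local (e.g.\ \'etale) section through $U\cap X_c$. Since you cite BLR, this sketch is not load-bearing.
\end{enumerate}
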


%To prove this proposition, we prepare a lemma.

%\begin{lem}
%Let $C$ be a non-singular algebraic curve. Let $\varphi : X \longrightarrow C$ and 
%$\psi : Y \longrightarrow C$ be proper algebraic varieties, and $f : X \longrightarrow Y$ be
%a relative rational map over $C$. Assume that $f$ induces a morphism on any fiber of
%$f : X \longrightarrow Y$. Then, $f$ is a morphism.
%\end{lem}
\begin{proof}
Let $V$ be the indeterminacy locus of the rational map $f$. Then by the general theory
of rational maps, we have $\dim V \leq \dim X -2$. Since each fiber of 
$\varphi : X \longrightarrow C$ has dimension $\dim X -1$, it follows that $V$ does not contain any fiber.
Hence, $f$ induces a rational map on every fiber. Moreover, since any rational map
from a nonsingular variety to an abelian variety is a morphim (cf. Lang \cite[Section II, Theorem 2]{L}),
we see that $f$ induces a morphim between a fiber of $\varphi$ and a fiber of $\psi$.

We consider the following commutative diagram:
%$$
%\begin{array}{ccc}
%X   & \stackrel{{\rm pr}_1}{\longleftarrow} & X\times_{C}Y \\
%\varphi \downarrow \quad \quad  &    & \quad \quad\downarrow {\rm pr}_2\\
%C & \stackrel{\psi}{\longleftarrow} & Y.
%\end{array}
%$$
\[
\xymatrix{
X\ar[d]_\varphi & X \times_C Y \ar[l]_{{\rm pr}_1\hspace{2mm}} \ar[d]^{{\rm pr}_2} \\
C  & Y \ar[l]_\psi
}
\]

We set $U =  X \setminus V$.
Then $U$ is a Zariski open subset of $X$. We denote by $f_{U}$ the restriction of $f$ to $U$.
Thus $f_U : U \longrightarrow Y$ is a relative morphism over $C$. 
Let $\Gamma_U$ be the graph of $f_U$, and let $\Gamma$ denote the Zariski closure of $\Gamma_U$
in $X\times_{C}Y$. Denote by $\gamma$ the restriction morphism of ${\rm pr}_1$ to $\Gamma$.
For $a \in C$ we consider the fiber $X_a=\varphi^{-1}(a)$. Then, as we saw above, the restriction 
$f\vert_{X_a}: X_a \longrightarrow Y_a$ is a morphism. Moreover, since we have 
${\rm pr}_2(\gamma^{-1}(X_a)) \subset Y_b$, we see that ${\rm pr}_1 : \gamma^{-1}(X_a) \longrightarrow X_a$ 
is an isomorphism.
Therefore, $f\vert_{X_a}: X_a \longrightarrow Y$ (not only to $Y_a$) is a morphism.
This shows that $f$ is defined at every point on $X$. Hence, we conclude that $f$ is a morphism.
\end{proof}

Let $C$ be a nonsingular curve over $k$, and let $\varphi : X \longrightarrow C$  be an
 abelian scheme. Denote by $X_{k(C)}$ the generic fiber of $\varphi$,
 and by ${\rm End}(X/C)$ the ring of relative endomorphisms of $\varphi : X \longrightarrow C$.
For an open set $U \subset C$, we consider the restricted morphism $\varphi\vert_{\varphi^{-1}(U)}:\varphi^{-1}(U)\longrightarrow U$. We denote by ${\rm End}(\varphi^{-1}(U)/U)$ 
the set of endomorphisms of $\varphi^{-1}(U)$ over $U$, and simply write it as ${\rm End}(X/U)$. 
%For each geometric point $s$ of $C$, we denote by $\calX_s$ the special fiber of $\varphi$ at $s$.
 
\begin{cor}\label{endo}
There is a natural isomorphism ${\rm End}(X/C) \cong {\rm End}(X_{k(C)})$. 
In particular, for any non-empty open variety $U$ of $C$, we have  
${\rm End}(X/C) \cong {\rm End}(X/U)$. 
 \end{cor}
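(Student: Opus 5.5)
The plan is to use Proposition \ref{rational-morphism} to extend generic endomorphisms to the whole family. The natural map is restriction to the generic fiber,
$$
{\rm End}(X/C)\lra {\rm End}(X_{k(C)}),\qquad f\mapsto f_{k(C)}=f\times_C {\rm Spec}\,k(C).
$$
We will show that this map is injective and surjective. It is clearly a ring homomorphism, since base change respects composition and addition of homomorphisms of abelian schemes.

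For injectivity, suppose $f_{k(C)}=0$. Then $f$ and the zero morphism agree on the generic fiber, hence on a dense open subset $\varphi^{-1}(U)$ of $X$. The scheme $X$ is integral, being smooth over the irreducible curve $C$ with geometrically connected fibers, and $Y=X$ is separated over $C$. Two $C$-morphisms $X\to X$ that coincide on a dense open subset therefore coincide everywhere, so $f=0$.

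For surjectivity, let $g\in{\rm End}(X_{k(C)})$. The morphism $g$ is defined by finitely many equations with coefficients in $k(C)$, so by the usual spreading-out argument it extends to a morphism $g_U:\varphi^{-1}(U)\to\varphi^{-1}(U)$ over some non-empty open $U\subset C$. This $g_U$ is a relative rational map $X\dashrightarrow X$ over $C$. Proposition \ref{rational-morphism}, applied with $Y=X$, then says that $g_U$ extends to a morphism $f:X\to X$ over $C$.

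It remains to check that $f$ is an endomorphism of abelian schemes, that is, a homomorphism preserving the zero section. The two maps $f\circ m$ and $m\circ(f\times f)$ from $X\times_C X$ to $X$, where $m$ is the group law, agree on the dense open set over $U$, because $g_U$ is a homomorphism there. Since $X\times_C X$ is reduced and $X$ is separated over $C$, they agree everywhere. The same argument gives $f\circ e=e$ for the zero section $e$. Alternatively, one can use rigidity: a $C$-morphism of abelian schemes sending $e$ to $e$ is automatically a homomorphism, and $f\circ e$ and $e$ agree over $U$ and hence on all of $C$. This proves ${\rm End}(X/C)\cong{\rm End}(X_{k(C)})$.

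For the second statement, apply the same result to the abelian scheme $\varphi^{-1}(U)\to U$. The curve $U$ is still a nonsingular curve and has the same generic point as $C$, so
$$
{\rm End}(X/U)\cong{\rm End}(X_{k(C)})\cong{\rm End}(X/C),
$$
and the composite isomorphism is the restriction map.

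The main obstacle is the spreading-out step, together with making sure that Proposition \ref{rational-morphism} applies to a map defined only over $\varphi^{-1}(U)$. That proposition's proof already treats such rational maps, so it suffices to note that a morphism on $\varphi^{-1}(U)$ is by definition a rational map on $X$.
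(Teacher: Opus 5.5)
Your proposal is correct and follows the paper's route. The paper also takes restriction to the generic fiber as the (clear) injection and obtains surjectivity by viewing a generic endomorphism as a relative rational map, then extending it via Proposition \ref{rational-morphism}. You additionally supply details the paper leaves implicit: the spreading-out step, the density argument for injectivity, and the check (by density or rigidity) that the extended morphism is a homomorphism of abelian schemes.
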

\begin{proof}
We clearly have a natural injection
$$
     {\rm End}(X/C) \hookrightarrow {\rm End}(X_{k(C)}).
$$
Take an element $f \in {\rm End}(X_{k(C)})$. Then $f$ defines a relative rational map
from $X$ to $X$. By Proposition \ref{rational-morphism} this map is in fact a morphism.
Hence, $f$ belongs to ${\rm End}(X/C)$. 
\end{proof}

\subsection{Moduli and genus} 
Let $n\ge 1$ and $p$ be a prime.  
Let ${\mathcal A}_{n, 1}$ denote the moduli space of principally 
polarized abelian varieties over $\bF_p$ of dimension $n$. Let ${\mathcal S}_{n,\bF_p}$
be the supersingular locus, and  ${\mathcal SS}_{n,\bF_p} \subset {\mathcal S}_{n,\bF_p}$ 
the superspecial locus in ${\mathcal A}_{n, 1}$.  
Then, 
${\mathcal S}_{n,\bF_p}$ is a union of subvarieties of dimension $[\frac{n}{4}]$ (cf. \cite[Section 4.9 Theorem]{LO})
and ${\mathcal SS}_{n,\bF_p}$ consists of finitely many  points. 

\begin{thm}[Ibukiyama-Katsua-Oort \cite{IKO}]\label{IKO}
   $\vert {\mathcal SS}_{n,\bF_p}\vert  = H_n(p, 1)$.
\end{thm}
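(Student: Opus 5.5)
The plan is to build an explicit bijection between $\mathcal{SS}_{n,\bF_p}$ and the set of global equivalence classes in $\mathcal{L}_n(p,1)$. Fix a supersingular elliptic curve $E$ over $\bF_p$ and identify $\mathrm{End}(E)$ with the maximal order $\mathcal{O}$ in $B$. Any superspecial principally polarized abelian variety is, as an abelian variety, isomorphic to $E^n$ by Shioda's theorem recalled in Section \ref{1.2}. So a point of $\mathcal{SS}_{n,\bF_p}$ is the same thing as a principal polarization $\lambda$ on $E^n$, taken up to automorphisms of $E^n$.

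The first step is to describe polarizations on $E^n$ by matrices. We have $\mathrm{End}(E^n)=M_n(\mathcal{O})$, and using the product polarization $\lambda_0$ we get $\mathrm{NS}(E^n)\otimes\Q\cong\{H\in M_n(B)\mid \overline{H}^t=H\}$ via $\lambda\mapsto \lambda_0^{-1}\lambda$. Under this identification:
\begin{itemize}
\item polarizations correspond to positive definite matrices $H$;
\item principal polarizations correspond to those $H\in M_n(\mathcal{O})$ that are invertible in $M_n(\mathcal{O})$, since the degree of $\lambda$ equals the reduced norm of $H$ squared;
\item isomorphism of $(E^n,\lambda)$ and $(E^n,\lambda')$ corresponds to $H'=\overline{g}^tHg$ for some $g\in GL_n(\mathcal{O})$.
\end{itemize}
The pullback formula $\alpha^*\lambda=\widehat{\alpha}\lambda\alpha$ together with the Rosati involution of $\lambda_0$, which is conjugate transpose, gives the last point. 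So $\mathcal{SS}_{n,\bF_p}$ is identified with the set of $GL_n(\mathcal{O})$-congruence classes of positive definite unimodular quaternion hermitian matrices over $\mathcal{O}$.

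The second step translates this into lattices. Given such an $H$, write $H=\overline{g}^tg$ with $g\in GL_n(B)$; this is possible because all positive definite hermitian forms over the definite algebra $B$ are equivalent to the standard $h$ (definite hermitian forms over $B$ are classified by rank). Then put $L=\mathcal{O}^ng^{-1}$, or the appropriate side convention. Unimodularity of $H$ says exactly that $L$ has norm $\mathcal{O}$ and $L$ is self-dual with respect to $h$. Locally this means $L_q\sim\mathcal{O}_q^n$ at every prime $q$, including $q=p$: a self-dual lattice is locally isometric to the standard one, and by Lemma \ref{Shimura} it is the principal local type, as opposed to $N_p$. Hence $L\in\mathcal{L}_n(p,1)$. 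Conversely, every $L$ in the principal genus is free locally everywhere, hence of the form $\mathcal{O}^ng$ up to the class of a locally free module. Changing $g$ by $GL_n(\mathcal{O})$ on one side and by $G_n(\Q)$ on the other matches congruence of $H$ with global equivalence of $L$. One must also check that similitudes in $G_n(\Q)$ rather than isometries do not enlarge the classes: for lattices of the same norm $\mathcal{O}$ the similitude factor must be a positive unit, hence $1$ up to scaling.

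The main obstacle is the converse direction: showing that every lattice in the principal genus actually arises from some $\mathcal{O}^n g$. For $n\ge2$ I would invoke the fact that a locally free left $\mathcal{O}$-module of rank $n\ge 2$ is free (Eichler; the stable class group of $\mathcal{O}$ is trivial since $\Q$ has class number one). If the lattice model instead yields $\mathcal{O}^{n-1}\oplus I$, I would use the isomorphism $E^{n-1}\times E'\cong E^n$ to absorb $I$. I expect getting the left/right and dual conventions to align exactly, especially the positivity and unimodularity dictionary at $p$, to be the fiddly part.
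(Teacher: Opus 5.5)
Your outline is correct for $n\ge 2$ (the range in which the paper defines superspecial varieties and Shioda's uniqueness of $E^n$ applies). It is essentially the Ibukiyama--Katsura--Oort argument that the paper cites rather than reproves; the paper spells out the non-principal analogue of this argument via the bijection $j:\mathrm{NS}(A)\to{\rm H}$ of Theorem \ref{intersection}, Lemma \ref{characterization} and Theorem \ref{1to1}. Two small fixes: $\deg\lambda$ equals the reduced norm of $H$ in $M_n(B)$ itself, not its square (for hermitian $H$ this reduced norm is already a square, e.g.\ $(ad-b\bar{b})^2$ when $n=2$); and since lattices in $\mathcal{L}_n(p,1)$ need not have norm $\mathcal{O}$, you should write $x\bar{x}^t=mH$ with $m\in\Q_{>0}$ and $H$ unimodular, as in Lemma \ref{characterization}, so that every global class is reached.
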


\begin{thm}[$n=2, 3$ by Katsura-Oort\cite{KO},\cite{KO2}, $n \geq 4$ by Oort-Li\cite{LO}]\label{KLO}
\begin{itemize}
\item[$({\rm i})$]If $n$ is odd, the number of irreducible components of 
${\mathcal S}_{n,\bF_p}$ is equal to the class number $H_n(p, 1)$ of the principal genus.
\item[$({\rm ii})$]If $n$ is even, the number of irreducible components of 
${\mathcal S}_{n,\bF_p}$ is equal to the class number $H_n(1, p)$ of the non-principal genus .
\end{itemize}
\end{thm}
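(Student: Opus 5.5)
The statement to be proved is Theorem \ref{KLO}, a cited result from the literature (Katsura--Oort for $n=2,3$ and Li--Oort for $n\ge 4$). I will sketch the structure of the proof following \cite{LO}, via the theory of polarized flag type quotients (PFTQs). Let $E$ be a supersingular elliptic curve and $g=n$.

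\textbf{Step 1: PFTQs cover $\mathcal{S}_{n,\bF_p}$.} Every principally polarized supersingular abelian variety $(A,\lambda)$ of dimension $n$ admits a minimal isogeny $E^n\to A$ of the following kind. It factors as a chain of quotients
$$Y_{n-1}=E^n\otimes(\cdots)\to Y_{n-2}\to\cdots\to Y_0=A,$$
each with kernel of bounded $\alpha$-type. The polarization $\eta$ on $Y_{n-1}$ pulled back from $\lambda$ satisfies $\ker\eta = E^n[F^{n-1}]$.

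\textbf{Step 2: reduce to a component count.} Fix such an $(E^n,\eta)$. The PFTQs with fixed $(Y_{n-1},\eta)$ form a projective scheme $\mathcal{P}_{n,\eta}$. This scheme is shown to be geometrically irreducible, and its generic point maps to an abelian variety with $a$-number $1$, so the map $\mathcal{P}_{n,\eta}\to\mathcal{S}_{n,\bF_p}$ is generically finite. Consequently the irreducible components of $\mathcal{S}_{n,\bF_p}$ correspond bijectively to the isomorphism classes of the pairs $(E^n,\eta)$. Injectivity here uses the fact that a generic point ($a=1$) determines its minimal isogeny uniquely.

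\textbf{Step 3: translate the pairs into lattices.} Via $\mathrm{Hom}(E,-)$, the pairs $(E^n,\eta)$ correspond to hermitian $\mathcal{O}$-lattices, where $\mathcal{O}=\mathrm{End}(E)$. If $n$ is even, $\ker\eta=E^n[F^{n-1}]$ has order a power of $p$ making $\eta=F^{n-1}$-twist. This corresponds to $p$-locally $\pi^{(n-1)}$-scaled lattices, which, after scaling by $p^{(n-2)/2}$, lie in the non-principal genus: their local structure at $p$ matches $N_p$ in (\ref{Np}). Combined with Lemma \ref{Shimura} and strong approximation for the class set, this identifies the count with $H_n(1,p)$. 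If $n$ is odd, the same scaling, now by $p^{(n-1)/2}$, lands in the principal genus, exactly as in Theorem \ref{IKO}, giving $H_n(p,1)$.

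\textbf{Main obstacle.} The hardest step is the irreducibility of $\mathcal{P}_{n,\eta}$ together with generic $a$-number $1$. This is proved by induction on the flag, using a careful Dieudonné-module analysis to show that each successive step is a smooth fibration with connected fibers, such as Grassmannian-type or Deligne--Lusztig-type varieties. For $n=2$ this is explicit: $\mathcal{P}_{2,\eta}\cong\mathbf{P}^1$, which is the Moret-Bailly family itself.
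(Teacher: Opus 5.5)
The paper gives no proof of this theorem. It is quoted from Katsura--Oort and Li--Oort, and the only supporting argument in the paper is the $n=2$ chain of bijections in Theorem~\ref{1to1}: components of $\mathcal{S}_{2,\bF_p}$ $\leftrightarrow$ Moret--Bailly families $\leftrightarrow \mathcal{D}/{\rm Aut}(E\times E) \leftrightarrow \Lambda/{\sim} \leftrightarrow \mathcal{L}_2(1,p)$ modulo global equivalence.

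Your outline is the Li--Oort PFTQ argument, and for $n=2$ it reduces to exactly that chain:
\begin{itemize}
\item $\mathcal{P}_{2,\eta}\cong\mathbf{P}^1$ parametrizes the subgroups $\alpha_p\subset\ker\eta=E^2[F]\cong\alpha_p\times\alpha_p$;
\item the universal quotient is the Moret--Bailly family;
\item $\eta$ ranges over $\mathcal{D}$, and its hermitian lattice is recorded by $j(\eta)\in\Lambda$ via Lemma~\ref{characterization}.
\end{itemize}
So the overall route is the right one. However, Step~1 is false as written.

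Suppose $\rho\colon E^n\to A$ satisfies $\ker(\rho^*\lambda)=E^n[F^{n-1}]$. Then $(\deg\rho)^2=\deg(\rho^*\lambda)=p^{n(n-1)}$, so $\deg\rho=p^{n(n-1)/2}$. The minimal isogeny need not have this degree. If $A$ is superspecial, the minimal isogeny is an isomorphism and $\rho^*\lambda=\lambda$ is principal.

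Hence the covering statement $\bigcup_\eta{\rm im}(\mathcal{P}_{n,\eta})=\mathcal{S}_{n,\bF_p}$, which Step~2 needs, does not follow from the minimal isogeny. It requires a separate lemma: for every principally polarized supersingular $(A,\lambda)$, produce some superspecial $Y$ and an isogeny $Y\to A$, refined to a flag satisfying the PFTQ conditions, whose pull-back polarization has kernel exactly $Y[F^{n-1}]$. Such an isogeny is in general non-minimal, and then in general not unique.

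You cannot get around this by taking the closure of the $a=1$ locus. The density of that locus in every component is itself one of the conclusions of the Li--Oort analysis, not an input.

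The uniqueness you use in Step~2 is needed only at $a=1$ points, where it does hold. Once Step~1 is replaced by the lemma above, the rest of your outline is consistent with the cited proofs, with irreducibility of $\mathcal{P}_{n,\eta}$ remaining the hard input, as you say.
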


Theorem \ref{IKO} was used in \cite{ATY} to construct the isogeny graph of
principally polarized superspecial abelian varieties. 
In this paper, we use Theorem \ref{KLO}
to construct the relative isogeny graph in the case $n= 2$.
Below, we provide the explicit formula of $H_2(1, p)$ (cf. \cite[(II), p.696 THEOREM]{HI}):
\begin{equation}\label{classnumber}
\begin{array}{rl}
H_2(1, p) = &\ds\frac{p^2 -1}{2880} + \frac{1}{64}(p+1)\Big\{1 - \Big(\frac{-1}{p}\Big)\Big\} + 
\frac{5}{192}(p-1)\Big\{1 + \Big(\frac{-1}{p}\Big)\Big\}       \\
          & +\ds\frac{1}{72}(p+1)\Big\{1 - \Big(\frac{-3}{p}\Big)\Big\} + 
\frac{1}{36}(p-1)\Big\{1 + \Big(\frac{-3}{p}\Big)\Big\} \\
 &
+ \left\{
 \begin{array}{l}
 \frac{2}{5} \cdots p \equiv 2~\mbox{or}~ 3~ (\mbox{mod}~5)\\
 0 \cdots p \equiv 1~\mbox{or}~ 4 ~(\mbox{mod}~5)
 \end{array}
 \right.
+ \left\{
 \begin{array}{l}
 \frac{1}{4} \cdots p \equiv 3~\mbox{or}~ 5~ (\mbox{mod}~8)\\
 0 \cdots p \equiv 1~\mbox{or}~ 7 ~(\mbox{mod}~8)
 \end{array}
 \right. \\
 &+ \left\{
  \begin{array}{l}
 \frac{1}{6} \cdots p \equiv 5~(\mbox{mod}~12)\\
 0 \cdots p \equiv 1, 7~\mbox{or}~ 11 ~(\mbox{mod}~12)
 \end{array}
 \right.
\end{array}
\end{equation}
if $p\ge 7$ and $H_2(1,p)=1$ for $2\le p\le 5$. 
We list $p$ and $H_2(1,p)$ with  $H_2(1,p)\le 10$ as below:
\begin{table}[htbp]
\centering 
\begin{tabular}{|c|c|c|c|c|c|c|c|c|c|c|} \hline
$H_2(1,p)$ & $1$ & $2$ & $3$  & $4$ & $5$ & $6$ & $7$ &  $8$ & $9$ & $10$ \\ \hline
$p$ & $p\le 11$ & $13, 17, 19, 23$ & $29, 31$  & $41, 47$ & $37, 43, 59$ & $53, 71$ & 
none &  $61, 67, 83$ & $79$ & $73, 89$ \\ \hline
\end{tabular}
\medskip
\caption{$H_2(1,p)\le 10$.}
\label{table1}
\end{table}

\subsection{IKO correspondence}
For the remainder of this section,
we fix a supersingular elliptic curve $E$ over $\bbF_p$ with
identity element $O_{E}$.
We also regard $E$ as an elliptic curve over $k=\overline{\bbF}_p$.
Set ${\mathcal O} = {\rm End}(E)$ and $B = {\rm End}^{0}(E)={\rm End}(E)\otimes_\Z \Q$.
Then $B$ is a quaternion division algebra over the rational number field $\Q$
with discriminant $p$, and ${\mathcal O}$ is a maximal order of $B$ 
(cf. Mumford \cite{M}, Section 22). 
Consider the superspecial abelian surface $A = E_{1} \times E_{2}$
with $E_{1} = E_{2} = E$, and take a divisor 
$$
  \Theta = E_{1}\times\{O_{E_2}\} + \{O_{E_1}\} \times E_{2}. 
$$
Then $\Theta$ is a principal polarization on $A$.
For simplicity, we also denote $E_{1}\times\{O_{E_2}\}$  
(resp.  $\{O_{E_1}\} \times E_{2}$) simply by $E_{1}$ (resp. by $E_{2}$)
for the sake of simplicity.

There is a natural identification of ${\rm End}(A)$ 
with the matrix ring ${\rm M_{2}}({\mathcal O})$:
$$
               {\rm End}(A) = {\rm M_{2}}({\mathcal O}).
$$
Explicitly, the action of $\left(
\begin{array}{cc}
\alpha  & \beta \\
\gamma & \delta
\end{array}
\right) \in {\rm M_{2}}({\mathcal O})$ on $A = E_1 \times E_2$ is given by
$$
\begin{array}{rccc}
\left(
\begin{array}{cc}
\alpha  & \beta \\
\gamma & \delta
\end{array}
\right) : &  A = E\times E & \longrightarrow & A = E\times E \\
    & (x, y)   &\mapsto & (\alpha (x) + \beta (y), \gamma (x) + \delta (y)).
\end{array}
$$

From now on, by a divisor $L$ we often mean its divisor class
in ${\rm NS}(A)$ whenever no confusion arises.
Since the map $\varphi_{\Theta} : A \longrightarrow {\rm Pic}^{0}(A)$ is an isomorphism,
for a divisor $L$ we obtain an endomorphism 
$\varphi_{\Theta}^{-1}\circ \varphi_L \in {\rm End}(A) \cong {\rm M}_{2}({\mathcal O})$.
Define  
$$
     {\rm H} = \Bigg\{
\left(
\begin{array}{cc}
\alpha  & \beta \\
\gamma & \delta
\end{array}
\right) \in {\rm M}_{2}({\mathcal O})
~\Bigg|
~\alpha, \delta \in \Z,~\gamma, \beta \in {\mathcal O},~\gamma = \bar{\beta}
\Bigg\} \subset {\rm M}_{2}({\mathcal O}).
$$
Using the notation in Subsection \ref{1.2}, we have the following correspondence.
\begin{thm}[Mumford \cite{M}, Ibukiyama, Katsura and Oort \cite{IKO}
and Katsura \cite{K}]\label{intersection}
The homomorphism
$$
\begin{array}{cccc}
j : & {\rm NS}(A) & \longrightarrow  & {\rm H} \\
   & L  &\mapsto  & \varphi_{\Theta}^{-1}\circ\varphi_{L}
\end{array}
$$
is bijective. Moreover,
for $L_{1}, L_{2} \in {\rm NS}(A)$ with
$$
  j(L_{1}) =
\left(
\begin{array}{cc}
\alpha_{1}  & \beta_{1} \\
\gamma_{1} & \delta_{1}
\end{array}
\right),~
  j(L_{2}) =
\left(
\begin{array}{cc}
\alpha_{2}  & \beta_{2} \\
\gamma_{2} & \delta_{2}
\end{array}
\right),
$$
the intersection number is given by
$$
L_{1}\cdot L_{2} = \alpha_{2}\delta_{1} + \alpha_{1}\delta_{2} - \gamma_{1}\beta_{2}
-\gamma_{2}\beta_{1}.
$$
For an endomorphism $r \in {\rm End}(A)$, we have
$$
         j(r^*L) = \bar{r}^t j(L) r 
$$
as an element of  ${\rm End}(A) = {\rm M_{2}}({\mathcal O})$.
\end{thm}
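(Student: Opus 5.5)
The plan is to derive all three assertions from the Rosati involution attached to $\Theta$ on ${\rm End}(A)={\rm M}_2(\calO)$, starting with the pullback formula, which is then used for both surjectivity and the intersection formula.

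\textbf{Step 1: the Rosati involution.} Since $\Theta$ is the product polarization, it is $\varphi_{\Theta}=\lambda_E\times\lambda_E$, where $\lambda_E$ is the canonical principal polarization of $E$. On ${\rm End}(E)=\calO$ the Rosati involution of $\lambda_E$ is the canonical involution $x\mapsto\bar x$ (Mumford, \S 21). Writing $\widehat{r}$ for the dual of $r\in{\rm End}(A)$ and computing on each factor, one gets
$$\varphi_\Theta^{-1}\,\widehat{r}\,\varphi_\Theta=\bar r^t .$$

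\textbf{Step 2: the pullback formula.} For any divisor $L$ one has the standard identity $\varphi_{r^*L}=\widehat r\,\varphi_L\, r$. Applying $\varphi_\Theta^{-1}$ and Step 1 gives $j(r^*L)=\bar r^t\, j(L)\, r$.

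\textbf{Step 3: $j$ is a well-defined injective homomorphism into ${\rm H}$.} The map $L\mapsto\varphi_L$ is additive and kills exactly ${\rm Pic}^0(A)$, so $j$ is an injective homomorphism on ${\rm NS}(A)$. Each $\varphi_L$ is symmetric, i.e.\ $\widehat{\varphi_L}=\varphi_L$ under $A\cong\widehat{\widehat A}$. Hence $j(L)$ is fixed by the Rosati involution, which by Step 1 means $\overline{j(L)}^t=j(L)$. For a matrix $\begin{pmatrix}\alpha&\beta\\ \gamma&\delta\end{pmatrix}$ this says exactly $\alpha=\bar\alpha$, $\delta=\bar\delta$ and $\gamma=\bar\beta$. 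The fixed points of the canonical involution in $\calO$ are the elements of $\calO\cap\Q=\Z$, so $\alpha,\delta\in\Z$ and $j(L)\in{\rm H}$.

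\textbf{Step 4: surjectivity.} I will not go through Mumford's general ``$2\varphi=\varphi_L$'' argument, which would leave a factor of $2$ to remove. Instead I will hit generators of ${\rm H}$ by explicit divisors.
\begin{itemize}
\item The divisor $E_1=E_1\times\{O_{E_2}\}$ is pulled back from the second factor, so $\varphi_{E_1}$ factors through the projection onto $E_2$. This gives $j(E_1)=\diag(0,1)$, and symmetrically $j(E_2)=\diag(1,0)$.
\item For $\beta\in\calO$ put $r=\begin{pmatrix}1&\beta\\0&1\end{pmatrix}$. By Step 2,
$$j(r^*\Theta)=\bar r^t r=\begin{pmatrix}1&\beta\\ \bar\beta&1+\beta\bar\beta\end{pmatrix}.$$
Since $\beta\bar\beta\in\Z$, subtracting $j(E_2)$ and $(1+\beta\bar\beta)\,j(E_1)$ leaves $\begin{pmatrix}0&\beta\\ \bar\beta&0\end{pmatrix}$.
\end{itemize}
These three families of matrices generate ${\rm H}$ additively, so $j$ is bijective.

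\textbf{Step 5: the intersection formula.} By Riemann--Roch, $L^2=2\chi(L)$ and $\chi(L)^2=\deg\varphi_L=\deg j(L)$. The degree of $x\in{\rm M}_2(\calO)$ acting on $E^2$ equals ${\rm Nrd}_{{\rm M}_2(B)/\Q}(x)^2$, because $\deg$ on $E$ is ${\rm Nrd}_B^2$ and is multiplicative. For a hermitian matrix in ${\rm H}$ the reduced norm is $\alpha\delta-\beta\bar\beta$. Hence $\chi(L)^2=(\alpha\delta-\beta\bar\beta)^2$ on the lattice ${\rm NS}(A)\cong{\rm H}$.

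Both $\chi$ and $\alpha\delta-\beta\bar\beta$ are quadratic forms on this lattice. Their squares agree on all lattice points, hence as polynomials, so $\chi=\pm(\alpha\delta-\beta\bar\beta)$ identically. The sign is $+$ because $\chi(\Theta)=1$ and $j(\Theta)=I_2$. Thus $L^2=2(\alpha\delta-\beta\gamma)$, and polarizing,
$$L_1\cdot L_2=\tfrac12\big((L_1+L_2)^2-L_1^2-L_2^2\big),$$
gives the stated bilinear formula, after using $\beta_1\gamma_2+\beta_2\gamma_1=\gamma_1\beta_2+\gamma_2\beta_1$ in $\Z$ (these are reduced traces).

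\textbf{Main obstacle.} I expect the hardest part to be Step 5, specifically the identification $\deg x={\rm Nrd}(x)^2$ for $x\in{\rm M}_2(\calO)$ and the sign determination of $\chi$. The surjectivity trick in Step 4 relies on Step 2, so Step 2 must be established first.
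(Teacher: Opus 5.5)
Steps 1--4 are correct. The paper gives no proof of its own here. It quotes Mumford, Ibukiyama--Katsura--Oort and Katsura, and only remarks that Katsura's pullback formula, proved for automorphisms, also holds for endomorphisms. Your Step 2 gives that extension directly, since $\varphi_{r^*L}=\widehat r\,\varphi_L\,r$ needs no invertibility of $r$. Your Step 4 gets surjectivity by hitting explicit generators of ${\rm H}$, instead of invoking the general fact that every symmetric homomorphism $A\to\widehat A$ over an algebraically closed field is some $\varphi_L$.

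Step 5, however, rests on two false statements whose errors happen to cancel.

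First, the degree is the reduced norm, not its square. On $E$ one has $\deg x=x\bar x={\rm Nrd}_B(x)$; for example $\deg[n]_E=n^2={\rm Nrd}_B(n)$, not $n^4$. On $A=E^2$ one has $\deg x=\det V_\ell(x)={\rm Nrd}_{{\rm M}_2(B)/\Q}(x)$ for any prime $\ell\neq p$. The reason is that $V_\ell(A)$ is a $4$-dimensional faithful module over ${\rm M}_2(B)\otimes\Q_\ell\cong{\rm M}_4(\Q_\ell)$, hence the simple one. This is a form of degree $4=2g$, as it must be.

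Second, the reduced norm of a hermitian matrix is the square of $\alpha\delta-\beta\bar\beta$, not $\alpha\delta-\beta\bar\beta$ itself. For $\alpha\neq 0$,
\[
\begin{pmatrix}\alpha&\beta\\ \bar\beta&\delta\end{pmatrix}
=\begin{pmatrix}1&0\\ \bar\beta\alpha^{-1}&1\end{pmatrix}
\begin{pmatrix}\alpha&\beta\\ 0&\delta-\alpha^{-1}\beta\bar\beta\end{pmatrix},
\]
so ${\rm Nrd}={\rm Nrd}_B(\alpha)\,{\rm Nrd}_B(\delta-\alpha^{-1}\beta\bar\beta)=(\alpha\delta-\beta\bar\beta)^2$. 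The case $\alpha=0$ follows by Zariski density. As a sanity check, $j(2\Theta)=2I_2$ has ${\rm Nrd}=16=\deg[2]_A$, whereas $\alpha\delta-\beta\bar\beta=4$.

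So the identity you actually use, $\chi(L)^2=\deg j(L)=(\alpha\delta-\beta\bar\beta)^2$, is true, but it must be justified this way. The claim $\deg x={\rm Nrd}(x)^2$, which you single out as the main obstacle, cannot be proved because it is false.

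With this repair the rest of Step 5 goes through: the integral-domain argument giving $\chi=\pm(\alpha\delta-\beta\bar\beta)$, the sign from $\chi(\Theta)=1$, the polarization, and ${\rm Trd}(\beta_1\bar\beta_2)={\rm Trd}(\bar\beta_1\beta_2)$.
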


\begin{rmk}
In \cite[Lemma 2.7]{K}, the first author assumed that $r$ is an automorphism and showed that 
$j(r^*L) = \bar{r}^t j(L)r$. However, the same proof remains valid when $r$ is merely an endomorphism.
\end{rmk}
 
\subsection{The Moret-Bailly family}\label{family}
In this subsection, we summarize the construction of Moret-Bailly families
of principally polarized supersingular abelian surfaces (Moret-Bailly \cite[Sections 1 and 2]{MB}).
Let $k$ be an algebraically closed field of charactersitic $p\geq 3$,
and $E$ be a supersingular elliptic curve defined over $k$.
We set $A = E \times E$, which is the unique superspecial abelian surface.
Let $\alpha_p = {\rm Spec}~ k[\epsilon]/(\epsilon^p)$ be the local-local group scheme
of rank $p$.
Then we have $\alpha_p \times \alpha_p \subset A$. Let ${\mathbf P}^1={\mathbf P}^1_k$ 
be the projective line over $k$. 
Gluing the families of the inclusions
\begin{equation}\label{iota}
\iota_t : \alpha_p\stackrel{(t, 1)}{\longrightarrow} \alpha_p \times \alpha_p \subset A\ 
(t\in k),\ 
\iota_s : \alpha_p\stackrel{(1, s)}{\longrightarrow} \alpha_p \times \alpha_p \subset A,\ 
(s\in k), 
\end{equation} 
by $st=1$, we have a relative subgroup scheme 
\begin{equation}\label{Q}
\mathcal{Q} \subset (\alpha_p \times \alpha_p) \times {\mathbf P}^1
\end{equation}
over ${\bf P^1}$ with parameter $t$ ($t \in k$ or $t = \infty$).
We have a sequence 
%$$
%\begin{array}{ccccccccc}
%0 &\rightarrow & Q & \longrightarrow & A \times {\mathbf P}^1 & \stackrel{\pi}{\longrightarrow} 
%& {\mathcal X} & \rightarrow & 0\\
%  &       &    &  {\rm pr}_1 \swarrow  \quad \quad & \downarrow  {\rm pr}_2 &\quad  \swarrow q & & & \\
%  &       & A   &           &{\mathbf P}^1 &            &  &  &
%\end{array}
%$$
\[
\xymatrix{
0 \ar[r] & \mathcal Q \ar[r] & A \times {\mathbf P}^1 \ar[r]^{\pi} \ar[dl]_{{\rm pr}_1} \ar[d]^{{\rm pr}_2} 
  & {\mathcal X}\ar[dl]_{q} \ar[r] & 0 \\
&A & {\mathbf P}^1  & & 
}
\]

with
${\mathcal X} \cong (A \times {\mathbf P}^1)/\mathcal{Q}$.

Now, we take an invertible sheaf  $\mL$ on $ E\times E = A$ which satisfies
the following two conditions:
\begin{itemize}
\item[$({\rm i})$] $\mL$ is symmetric.
\item[$({\rm ii})$] $K(\mL)\cong \alpha_p \times \alpha_p$. 
\end{itemize}
Let $\iota$ be the inversion of $A$. Then, ``symmetric'' means
that $\iota^\ast\mL \cong \mL$.
Using the descent theory by Mumford (\cite
[Corollary of Theorem 2 in Section 23]{M}), for the projection
$$
      \pi_t :  A \longrightarrow A/\iota_t(\alpha)
$$
there exists an invertible sheaf  $\mM_t$ on $A/\iota_t(\alpha)$ 
such that $\mL \cong \pi_t^*\mM_t$.
Considering relatively this situation (cf. Moret-Bailly \cite[Subsection 2.1]{MB}), 
we see there exists an invertible sheaf $\mM$ on ${\mathcal X}$
such that 
\begin{equation}\label{prL}
{\rm pr}_1^*\mL \cong \pi^*\mM.
\end{equation}
Then, by Moret-Bailly \cite[Subsection 2.1]{MB}, there exist an effective relative divisor $D$ 
on ${\mathcal X}$ such that 
$$
{\mathcal O}_{\mathcal X}(D) \cong M \otimes q^*{\mathcal O}_{{\mathbf P}^1}\Big(\frac{p-1}{2}\Big).
$$
The effective relative divisor $D$ is a family of principal polarizations on the fibers of 
$q : {\mathcal X} \longrightarrow {\mathbf P}^1$. 
The pair $({\mathcal X}, D)$ or $({\mathcal X}, \mM)$ is called a Moret-Bailly family of principally polarized supersingular abelian surfaces.

\begin{rmk}
Even in the case of $p=2$, one can construct a similar Moret-Bailly family by a different method
(cf. Moret-Bailly \cite{MB1}).
\end{rmk}

We give here a remark on the ${\ell}$-adic Tate module $T_{\ell}({\mathcal X}/{\mathbf P}^1)$. 
Let $n$ be a positive integer.
Let $X$ denote the generic fiber of $q: {\mathcal X}\longrightarrow {\mathbf P}^1$, and let
$[n]_{{\mathcal X}/{\mathbf P}^1}$  denote the relative multiplication-by-$n$ morphism 
associated with 
$q : {\mathcal X} \longrightarrow {\mathbf P}^1$. Assume that  $n$ is prime to $p$. Then 
$\mathrm{Ker} [n]_{{\mathcal X}/{\mathbf P}^1}$ is \'etale over ${\mathbf P}^1$. 
Since ${\mathbf P}^1$ is simply connected,
$\mathrm{Ker} [n]_{{\mathcal X}/{\mathbf P}^1}$ splits as a disjoint union of $n^4$ copies 
of ${\mathbf P}^1$.
We denote by ${\mathcal X}/{\mathbf P}^1[n]$ 
(resp. $(E\times E \times {\mathbf P}^1)/{\mathbf P}^1[n]$) the group of 
$n$-torsion sections. Since $\pi$ is purely inseparable, we have isomorphisms
$$
    (E\times E \times {\mathbf P}^1)/{\mathbf P}^1[n] \cong {\mathcal X}/{\mathbf P}^1[n] \cong ({\mathbf Z}/n{\mathbf Z})^4.
$$
Therefore, we have natural isomorphisms
$$
T_{\ell}(E\times E)\cong T_{\ell}((E\times E \times {\mathbf P}^1)/{\mathbf P}^1)\cong 
T_{\ell}({\mathcal X}/{\mathbf P}^1)\cong   T_{\ell}(X).
$$
For the endomorphism algebra, Corollary \ref{endo} yields
$$
   {\rm End}(X) \cong {\rm End}({\mathcal X}/{\mathbf P}^1) \subset {\rm End}(E \times E).
$$

\subsection{Construction of relative isogenies}\label{reliso}
Let ${\mathcal D}$ be the set of all invertible sheaves on $A$ which satisfy the conditions
(i) and (ii) in Subsection \ref{family}.
We consider generalized Richelot isogenies in ${\mathcal D}$.
We take $\mL \in {\mathcal D}$ and a prime number $\ell$ which is prime to $p$. 
%Here, we consider $L$ a divisor on $A$. 
There exist subgroup schemes $K_1(\mL)$ and $K_2(\mL)$ such that
\begin{equation}\label{K1K2}
{\rm Ker}~ \varphi_{\mL^{\otimes \ell}}\cong K_1(\mL) \times K_2(\mL)
\end{equation}
with $K_1(\mL) \cong \alpha_p \times \alpha_p$ and $K_2(\mL) \cong (\Z/\ell \Z)^4$.
The group scheme $K(\mL^{\otimes \ell}) = {\rm Ker}~ \varphi_{\mL^{\otimes \ell}}$ has the skew-symmetric bilinear 
homomorphism associated to the commutator in the theta group ${\mathcal G}(\mL^{\otimes \ell})$
(cf. Mumford \cite[Section 23]{M}). Let $G$ be a maximal isotropic subgroup of $K_2(\mL)$
with respect to this skew symmetric bilinear homomorphism.
Then, $G \cong (\Z/\ell \Z)^2$. We consider the quotient surface 
$A/G$ :
$$
    \phi : A \longrightarrow A/G.
$$  
By the descent theory of Mumford \cite[Corollary of Theorem 2 in Section 23]{M},
there exists a divisor $\mL'$ on $A/G$ such that $\mL^{\otimes \ell} \sim \phi^*\mL'$ and
a commutative diagram:
%$$
%\begin{array}{ccc}
%   A   & \stackrel{\varphi_{L^{\otimes \ell}}}{\longrightarrow} & \widehat{A}\\
%\phi \downarrow &      & \uparrow \widehat{\phi} \\
%A/G    & \stackrel{\varphi_{L'}}{\longrightarrow} & \widehat{(A/G)}.
%\end{array}
%$$
\[
\xymatrix{
A \ar[r]^{\varphi_{\mL^{\otimes \ell}}} \ar[d]_\phi & \widehat{A} \\
A/G \ar[r]^{\varphi_{\mL'}} & \widehat{(A/G)}\ar[u]_{\widehat{\phi}} 
}
\]
Observe that $\mL'$ is determined, up to algebraic equivalence, uniquely by $\mL^{\otimes \ell}$.
Since ${\rm Ker}~\phi \cong (\Z/\ell \Z)^2$ and ${\rm Ker}~\widehat{\phi}$
is dual to ${\rm Ker}~\phi$, we see ${\rm Ker}~\widehat{\phi} \cong (\Z/\ell \Z)^2$.
Therefore, we have ${\rm Ker}~\varphi_{\mL'}\cong \alpha_p \times \alpha_p$.
We may assume $\mL'$ is symmetric by a suitable translation and $\mL^{\otimes \ell} \sim \phi^*\mL'$.
Therefore, we see $A/G \cong E \times E = A$ (cf. Oort \cite[Theorem 2]{O}) and 
$\mL' \in {\mathcal D}$. Since the degree of $\phi$ is not divisible by $p$, we also see 
$\phi(\alpha_p \times \alpha_p) \cong \alpha_p \times \alpha_p$
and $\phi(\iota_t(\alpha_p)) \cong \alpha_p$.
Therefore, we have a commutative diagram:
%$$
%\begin{array}{cccc}
%          & A & \stackrel{\pi_t}{\longrightarrow} & A/\iota_t(\alpha_p)\\
%        & \phi  \downarrow &             & \downarrow \phi'    \\
%      A \cong    & A/G   &  \stackrel{\rho_t}{\longrightarrow} &(A/G)/\phi(\iota_t(\alpha_p))
%\end{array}
%$$
\[
\xymatrix{
A \ar[r]^{\pi_t \hspace{5mm}} \ar[d]_\phi & A/\iota_t(\alpha_p)\ar[d]^{\phi'}  \\
A\simeq A/G \ar[r]^{\rho_t\hspace{5mm}} & (A/G)/\phi(\iota_t(\alpha_p))
}
\]
Here, $\phi'$ is the homomorphism induced by $\phi$.

\begin{lem}\label{RichelotIsogeny}
The homomorphism $\phi': A/\iota_t(\alpha_p) \longrightarrow (A/G)/\phi(\iota_t(\alpha_p))$
is a Richelot isogeny. Namely, if we denote by $\mM'_t$ the invertible sheaf on 
$(A/G)/\phi(\iota_t(\alpha_p))$ given by $\mL' = \rho_t^*\mM'_t$, then we have
$\mM_t^{\otimes \ell} \cong \phi'^*\mM'_t$.
\end{lem}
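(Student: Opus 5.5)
We want to show $\mM_t^{\otimes \ell} \cong \phi'^*\mM'_t$ on $A/\iota_t(\alpha_p)$. The plan is to pull everything back along the purely inseparable map $\pi_t$ and use descent uniqueness. By construction $\mL \cong \pi_t^*\mM_t$, hence $\mL^{\otimes\ell}\cong \pi_t^*(\mM_t^{\otimes \ell})$. On the other hand, from the commutative square $\phi'\circ\pi_t=\rho_t\circ\phi$ we get
\[
\pi_t^*(\phi'^*\mM'_t)\cong \phi^*(\rho_t^*\mM'_t)\cong \phi^*\mL' \sim \mL^{\otimes \ell}.
\]
So both $\mM_t^{\otimes\ell}$ and $\phi'^*\mM'_t$ are descents of (the class of) $\mL^{\otimes\ell}$ along $\pi_t$. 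It remains to show that such a descent is unique, at least up to the equivalence in which the statement is meant (algebraic equivalence / class in ${\rm NS}$, possibly after normalizing by symmetry).

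For uniqueness I would argue as follows. Since $\pi_t$ is an isogeny, $\pi_t^*:{\rm NS}(A/\iota_t(\alpha_p))\to {\rm NS}(A)$ is injective modulo torsion, and ${\rm NS}$ of an abelian variety is torsion free. Hence $\pi_t^*N\equiv \pi_t^*N'$ algebraically forces $N\equiv N'$. Concretely, $\varphi_{\pi_t^*N}=\widehat{\pi_t}\circ\varphi_N\circ\pi_t$, and because $\pi_t$ is an epimorphism and $\widehat{\pi_t}$ is an isogeny, equality of $\varphi$'s upstairs gives equality of $\varphi_N$ and $\varphi_{N'}$ in ${\rm Hom}\otimes\Q$, hence in ${\rm Hom}$. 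This gives $\mM_t^{\otimes\ell}$ and $\phi'^*\mM'_t$ algebraically equivalent. To upgrade to an isomorphism, note that the difference lies in ${\rm Pic}^0$ and pulls back to a trivial (or translation-adjustable) sheaf. Using that $\mL'$ was chosen symmetric, both sheaves can be taken symmetric, so the difference is a symmetric element of ${\rm Pic}^0$ in the kernel of $\widehat{\pi_t}$, i.e.\ of order dividing $p$ (the kernel of $\widehat{\pi_t}$ is $\alpha_p$ dual, a local group scheme). Its only $k$-point is trivial, so the difference is trivial.

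I would also record why $\phi'$ deserves the name Richelot (generalized $(\ell,\ell)$) isogeny. Its kernel is $\phi$-image-related: $\ker\phi'=\pi_t(G)\cong(\Z/\ell\Z)^2$, because $\deg\phi$ is prime to $p$, so $G\cap\iota_t(\alpha_p)=0$. Moreover, $\ker\phi'$ is maximal isotropic in $K(\mM_t^{\otimes\ell})[\ell]$, since the commutator pairing is compatible with $\pi_t$ on the prime-to-$p$ part.

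The main obstacle is the passage from algebraic equivalence to honest isomorphism, since $\mL'$ is only determined up to algebraic equivalence. I would handle it as above, via the fact that $\ker\widehat{\pi_t}$ has no nontrivial $k$-points. If needed, I would instead interpret the lemma's $\cong$ in ${\rm NS}$, consistent with the convention in Theorem~\ref{intersection} that divisors are taken up to class.
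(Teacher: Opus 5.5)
Your proposal is correct and follows the paper's route. Both $\mM_t^{\otimes\ell}$ and $\phi'^*\mM'_t$ pull back along $\pi_t$ to $\mL^{\otimes\ell}$, and the descent is unique because $\pi_t^*=\widehat{\pi_t}$ on ${\rm Pic}^0$ is purely inseparable (its kernel, the Cartier dual of $\alpha_p$, has no nontrivial $k$-points); this is exactly the paper's one-line proof, and your ${\rm NS}$-injectivity step just makes explicit why the difference lies in ${\rm Pic}^0$.

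One correction: the appeal to symmetry does not by itself put the difference in $\ker\widehat{\pi_t}$, since a symmetric element of ${\rm Pic}^0$ is merely $2$-torsion. What does this is the descent isomorphism $\phi^*\mL'\cong\mL^{\otimes\ell}$ holding as line bundles; without it, the lemma is to be read in ${\rm NS}$, where your first step already suffices.
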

\proof{
This lemma follows from the fact that the homomorphism
$\pi_t^* : {\rm Pic}^0(A/\iota_t(\alpha_p)) \longrightarrow {\rm Pic}^0(A)$
is purely inseparable.
}

\begin{Def}\label{DefRI} 
The homomorphism $\phi'$ in the above lemma associated to a maximal isotropic subgroup of $K_2(\mL)\simeq 
(\Z/\ell\Z)^4$ is called  a relative Richelot isogeny or a relative $(\ell, \ell)$-isogeny.
\end{Def}

\subsection{Divisors and non-principal genus}
We take ${\mathcal O}= {\rm End}(E)$ and $B = {\rm End}(E)\otimes_{\Z} \Q$.
For every left ${\mathcal O}$-lattice $M$ in $B^n$, there exists $x\in GL_n(B)$ such that
$M ={\mathcal O}^n x$. 
Denoting by $F$ the Frobenius morphism on the supersingular elliptic curve $E$, we set
$$
\Lambda = 
\left\{
\left(
\begin{array}{cc}
ps   &   r\\
\bar{r} & pt
\end{array}
\right)
~\Bigg|
~s, t \in \Z, s>0, t>0, r \in F{\mathcal O}~\mbox{such that}~ p^2st - r\bar{r} = p
\right\}
\subset {\rm H}
$$
For $g_1, g_2 \in \Lambda$, we say that $g_1$ is equivalent to $g_2$ 
if there exist $\gamma \in GL_2({\mathcal O})$
and $m \in \Q_{>0}$ such that $\bar{\gamma}^t g_1 \gamma = m g_2$.
We denote this equivalent relation by $g_1 \sim g_2$.

The following lemma characterizes the elements of the non-principal genus
(cf. \cite[Lemma 2.6]{IKO}).
\begin{lem}\label{characterization}
A left ${\mathcal O}$-lattice $M= {\mathcal O}^nx$ in $B^n$ with $x \in GL_n(B)$ belongs to 
the non-principal genus ${\mathcal L}_n(1, p)$ if and only if $x\bar{x}^t = m g$ for some 
$m\in \Q_{>0}$ and $g \in \Lambda$.
\end{lem}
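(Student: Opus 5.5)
The plan is to work locally, comparing the Gram matrix $H:=x\bar{x}^t$ of $M=\mathcal{O}^2x$ (here $n=2$, the case relevant to $\Lambda$) with the Gram matrices of the local models in Lemma \ref{Shimura}. The dictionary is this: writing $e_1,e_2$ for the standard basis, the $(i,j)$ entry of $H$ is $h(e_ix,e_jx)$. Replacing the basis by $\gamma x$ with $\gamma\in GL_2(\mathcal{O}_q)$ changes $H$ to $\gamma H\bar{\gamma}^t$, and replacing $x$ by $xg$ with $g\in G_2(\Q_q)$ scales $H$ by $\nu(g)$. So $M_q\sim L_q$ for a second lattice $L=\mathcal{O}^2y$ holds exactly when $x\bar{x}^t$ and $y\bar{y}^t$ agree up to $GL_2(\mathcal{O}_q)$-congruence and a scalar in $\Q_q^\times$. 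The converse direction of this dictionary uses that two $x,y$ with $x\bar x^t=\lambda y\bar y^t$ differ by an element of $G_2(\Q_q)$.

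I would first compute the local models. For $q\ne p$ the model is $\mathcal{O}_q^2$, with Gram matrix $I_2$. At $p$, by (\ref{Np}), $N_p$ has Gram matrix
$$\mathrm{diag}(1,\pi)\,\xi\bar{\xi}^t\,\mathrm{diag}(1,\bar\pi)=\begin{pmatrix}0&\bar\pi\\ \pi&0\end{pmatrix}.$$
Its entries generate the two-sided prime $\mathfrak{P}=\pi\mathcal{O}_p$, its diagonal entries lie in $\mathfrak{P}\cap\Q_p=p\Z_p$, and its ``determinant'' $ad-r\bar r$ (the quaternionic Hermitian discriminant) equals $-N(\pi)$, which has $p$-valuation exactly $1$. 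Globally I will use that $F\mathcal{O}$ is the unique two-sided ideal of $\mathcal{O}$ above $p$, so $F\mathcal{O}\otimes\Z_p=\mathfrak{P}$.

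\emph{($\Leftarrow$)} Suppose $H=mg$ with $g=\begin{pmatrix}ps&r\\ \bar r&pt\end{pmatrix}\in\Lambda$; we may drop $m$ by scaling. Take first $q\ne p$. The entries of $g$ lie in $\mathcal{O}_q\cong M_2(\Z_q)$ and the discriminant $p^2st-r\bar r=p$ is a $q$-unit, so $g$ is a unimodular positive Hermitian form. By Lemma \ref{Shimura}(i) (Shimura's classification of maximal lattices, which here is just unimodularity) it is congruent to $I_2$. Now take $q=p$. Since $r\in F\mathcal{O}$, $p\mid N(r)$, and $p^2st-N(r)=p$ forces $\mathrm{ord}_p N(r)=1$. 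Hence $r=\pi u$ with $u\in\mathcal{O}_p^\times$. An explicit row and column reduction with elements of $GL_2(\mathcal{O}_p)$ should then clear the diagonal terms $ps,pt$: they are divisible by $N(\pi)$, hence absorbable through the off-diagonal unit $u$. This brings $g$ to the antidiagonal model above, so $M_p\sim N_p$. Maximality of $M$ follows because it is locally equivalent to maximal lattices everywhere. Therefore $M\in\mathcal{L}_2(1,p)$.

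\emph{($\Rightarrow$)} Suppose $M\in\mathcal{L}_2(1,p)$. By the dictionary, at each $q\ne p$ the matrix $H$ is $\nu_q\,\gamma_q\bar\gamma_q^t$, and at $p$ it is $\nu_p\,\gamma_p\begin{pmatrix}0&\bar\pi\\ \pi&0\end{pmatrix}\bar\gamma_p^t$. Thus the norm ideal of $M$ is locally $\nu_q\mathcal{O}_q$ and $\nu_p\mathfrak{P}$. Since the $\nu_q$ are units for almost all $q$, there is a global $m\in\Q_{>0}$ (positivity from definiteness) with $m\Z_q=\nu_q\Z_q$ for every $q$. Put $g=m^{-1}H$. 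Its diagonal entries are rational, positive, and lie in $\mathfrak{P}\cap\Q=p\Z$, so they have the form $ps,pt$ with $s,t>0$. Its off-diagonal entry lies in $\mathcal{O}\cap\bigcap_q(\text{local ideals})=F\mathcal{O}$, and the two off-diagonal entries are conjugate because $H$ is Hermitian. Finally, the discriminant of $g$ is a positive rational number which is a $q$-unit for $q\ne p$ and has $p$-valuation $1$. It is therefore exactly $p$, so $g\in\Lambda$.

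The main obstacle I expect is the local reduction at $p$ in ($\Leftarrow$), together with the dictionary ``$x\bar x^t$ congruent up to scalar $\Rightarrow$ lattices locally equivalent''. The first needs an explicit congruence computation in the ramified division algebra $B_p$. For the second I would appeal to the Witt-type uniqueness of quaternionic Hermitian forms with given Gram data, as in Shimura \cite{Shimura}, or cite \cite[Lemma 2.6]{IKO} for the parallel argument.
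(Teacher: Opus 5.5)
Your proof is correct. Note that the paper does not prove this lemma: it quotes it from \cite[Lemma 2.6]{IKO}, so there is no internal argument to compare with. What you supply is a self-contained local--global proof of the only meaningful case $n=2$ (the set $\Lambda$ consists of $2\times 2$ matrices). Your route has three steps. You translate local equivalence into $GL_2(\mathcal{O}_q)$-congruence of Gram matrices up to a scalar in $\Q_q^\times$. You read off the norm ideal ($\nu_q\mathcal{O}_q$, resp.\ $\nu_p\mathfrak{P}$) and the valuation of the discriminant $ad-r\bar r$ as local invariants. You then glue, which recovers exactly the three defining conditions of $\Lambda$. This also shows why $F\mathcal{O}$ (the global ideal localizing to $\mathfrak{P}$) and the value $p$ (discriminant a unit away from $p$ and of odd $p$-valuation) appear, which the citation leaves implicit. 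At $q\neq p$, applying Lemma \ref{Shimura}(i) needs $M_q$ maximal. This holds because a unimodular lattice equals its dual, so no strictly larger lattice has the same norm.

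The two steps you flag as obstacles are easier than you expect.

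\emph{The converse of your dictionary.} This needs no Witt-type theorem. Suppose $x\bar x^t=\lambda z\bar z^t$ with $z=\gamma y$ and $\gamma\in GL_2(\mathcal{O}_q)$. Then $h:=x^{-1}z$ satisfies $h\bar h^t=\lambda^{-1}I_2$, so $h\in G_2(\Q_q)$ and $(\mathcal{O}_q^2x)h=\mathcal{O}_q^2y$.

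\emph{The reduction at $p$.} Write $g=\begin{pmatrix}a&b\\ \bar b&d\end{pmatrix}$ with $a,d\in p\Z_p$ and $b=\pi u$.
\begin{itemize}
\item Conjugate by $\begin{pmatrix}1&c\\0&1\end{pmatrix}$ with $c=\tau\bar b^{-1}$, $\tau\in\Q_p$. The $(1,1)$ entry becomes $a+2\tau+\tfrac{d}{N(b)}\tau^2$. It vanishes for $\tau=-a/\bigl(1+\sqrt{1-ad/N(b)}\bigr)\in p\Z_p$, taking the square root $\equiv 1$ mod $p$; it exists by Hensel's lemma since $ad/N(b)\in p\Z_p$ and $p$ is odd.
\item Then $c\in\mathfrak{P}$, and the new off-diagonal entry $b'=b+cd$ is still $\pi$ times a unit.
\item Conjugating by $\begin{pmatrix}1&0\\c'&1\end{pmatrix}$ with $c'=-\tfrac{d}{2}b'^{-1}\in\mathfrak{P}$ kills the $(2,2)$ entry.
\item A diagonal unit matrix then gives $\begin{pmatrix}0&\bar\pi\\ \pi&0\end{pmatrix}$.
\end{itemize}

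You can avoid this computation, and the restriction to odd $p$, by proving $M_p$ maximal instead. Any Gram matrix of norm $\mathfrak{P}$ has diagonal entries in $p\Z_p$ and off-diagonal entries in $\mathfrak{P}$, hence discriminant of $p$-valuation $\ge 1$. A strictly larger lattice $\mathcal{O}_p^2x'\supsetneq\mathcal{O}_p^2x$, with $x=\gamma x'$, would have discriminant valuation smaller by $\ord_p\mathrm{Nrd}(\gamma)\ge 1$, where $\mathrm{Nrd}$ is the reduced norm of $M_2(B_p)$. That gives valuation $\le 0$, a contradiction. Then Lemma \ref{Shimura}(ii) applies. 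The parity of $\ord_p$ of the discriminant is invariant under equivalence; it is $0$ for $\mathcal{O}_p^2$ and $1$ for $N_p$. This forces $M_p\sim N_p$.
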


Let $M_1 = {\mathcal O}^nx_1$ and $M_2 = {\mathcal O}^nx_2$ $(x_1, x_2 \in GL_n(B))$
be two elements of ${\mathcal L}_n(1, p)$. As in Lemma \ref{characterization}, we have
$$
  x_i\bar{x}_i^t = m_ig_i ~\mbox{for some}~m_i\in \Q_{>0}\mbox{ and}~g_i \in \Lambda~(i=1,2).
$$
Then $M_1$ is globally equivalent to $M_2$ if and only if $g_1\sim g_2$.

Recall that $\mathcal D$ denotes the set of divisor classes in ${\rm NS}(A)$ 
satisfying the conditions (i) and (ii) in Subsection \ref{family}. We sometimes regards its elements as ample divisors of $A$ via the first Chern class map. 
We have an isomorphism
\begin{equation}\label{DLambda}
\begin{array}{rccc}
j\vert_{\mathcal D} : &{\mathcal D} & \cong & \Lambda\\
        &  \mL   & \mapsto  & \varphi_{\Theta}^{-1}\circ \varphi_{\mL}. 
\end{array}
\end{equation}
There is a natural isomorphism ${\rm Aut}(A) \cong \GL_2({\mathcal O})$.  
For $\theta \in {\rm Aut}(A)$, the action on ${\mathcal D}$ is given by 
$$
     \mL \mapsto \theta^{*}\mL ~(\mL \in {\mathcal D})
$$
and this action corresponds to the action of $\gamma \in GL_2({\mathcal O})$ on $\Lambda$,
namely
$$
         g \mapsto \bar{\gamma}^tg\gamma.
$$

Recall that  $A_{2, 1}$ denotes the moduli space of principally polarized abelian surfaces,
and that ${\mathcal S}_{2,\bF_p}$ denotes the locus of supersingular abelian surfaces 
in $A_{2,1}$. When no confusion arises, we sometimes regard ${\mathcal S}_{2,\bF_p}$ as
the set of irreducible components of the locus 
of the supersingular locus in $A_{2,1}$.
We also denote by ${\rm MB}(p)$ the set of isomorphism classes of Moret-Bailly families 
of principally polarized supersingular abelian surfaces. 
For a Moret-Bailly family $q : ({\mathcal X}, \mM) \longrightarrow {\mathbf P}^1$, 
by universality of $A_{2,1}$, we obtain a morphism
$$
      \phi: {\mathbf P}^1 \longrightarrow A_{2,1}.
$$
The image of $\phi$ yields an irreducible component of ${\mathcal S}_{2,\bF_p}$. Conversely, every irreducible
component of ${\mathcal S}_{2,\bF_p}$ arises in this method (cf. \cite[Corollary 2.2]{KO}).
The following theorem provides an explanation of Theorem \ref{KLO} in the case $n=2$.
\begin{thm}\label{1to1}
Under the notation above, there exist the following one-to-one correspondences:
$$
   {\mathcal L}_2(1, p)/\mbox{global equivalence}\longleftrightarrow \Lambda/\sim ~\longleftrightarrow {\mathcal D}/{\rm Aut}(A) \longleftrightarrow {\rm MB}(p) \longleftrightarrow {\mathcal S}_{2,\bF_p}.
$$
\end{thm}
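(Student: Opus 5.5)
We prove the four correspondences one at a time, from left to right, composing bijections already available in the paper.

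\emph{First bijection.} The map ${\mathcal L}_2(1,p)/\mbox{global equivalence}\to \Lambda/\sim$ is read off from Lemma \ref{characterization}. For $M={\mathcal O}^2x$ in the non-principal genus, write $x\bar{x}^t=mg$ with $m\in\Q_{>0}$ and $g\in\Lambda$, and send $M$ to the class of $g$.
\begin{itemize}
\item Well-definedness. Changing $x$ to $\gamma x$ with $\gamma\in GL_2({\mathcal O})$ replaces $g$ by $\gamma g\bar\gamma^t$. This is of the required form after transposing and conjugating, since $g$ is hermitian. The ambiguity in the decomposition $mg$ is only a positive rational scalar, because $\Lambda$ is normalized by $\det g=p$ (in the reduced sense $p^2st-r\bar r=p$).
\item Injectivity. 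This is the statement recorded after Lemma \ref{characterization}: $M_1\sim M_2$ if and only if $g_1\sim g_2$.
\item Surjectivity. Given $g\in\Lambda$, $g$ is a positive definite hermitian matrix over $B$. By the classification of quaternion hermitian forms (Shimura), $g=x\bar x^t$ for some $x\in GL_2(B)$, at least up to a positive rational scalar. The converse direction of Lemma \ref{characterization} then puts ${\mathcal O}^2x$ in ${\mathcal L}_2(1,p)$.
\end{itemize}

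\emph{Second bijection.} The correspondence $\Lambda/\sim\ \leftrightarrow {\mathcal D}/{\rm Aut}(A)$ comes from (\ref{DLambda}) together with Theorem \ref{intersection}.
\begin{itemize}
\item The isomorphism $j|_{\mathcal D}$ intertwines $\theta^*$ with $g\mapsto\bar\gamma^t g\gamma$.
\item Orbits of $GL_2({\mathcal O})$ therefore match orbits of ${\rm Aut}(A)$.
\item The remaining point is the scalar $m$ in the definition of $\sim$. If $\bar\gamma^tg_1\gamma=mg_2$ with $g_1,g_2\in\Lambda$, taking reduced norms gives $p\,N(\gamma)=m^2p$. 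Since $\gamma$ is a unit, $N(\gamma)=1$, so $m=1$. Hence $\sim$ is exactly the ${\rm Aut}(A)$-action.
\end{itemize}

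\emph{Third bijection.} The correspondence ${\mathcal D}/{\rm Aut}(A)\leftrightarrow{\rm MB}(p)$ uses the construction in Subsection \ref{family}.
\begin{itemize}
\item Definition of the map. Each $\mL\in{\mathcal D}$ produces $(\mathcal X,\mM)$. The subgroup $\mathcal Q$ is intrinsic: it is the family of $\alpha_p\subset K(\mL)\cong\alpha_p\times\alpha_p$ parametrized by ${\bf P}^1$. Hence an automorphism $\theta$ of $A$ carries the family of $\theta^*\mL$ isomorphically onto the family of $\mL$, and the map is well defined.
\item Surjectivity. This is the definition of a Moret-Bailly family.
\item Injectivity. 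Let $(\mathcal X,\mM)\cong(\mathcal X',\mM')$ over ${\bf P}^1$, possibly after an automorphism of ${\bf P}^1$. The isomorphism is a relative morphism. It lifts through the purely inseparable maps $\pi,\pi'$ (Frobenius-type factorization, using that $A\times{\bf P}^1$ can be recovered from ${\mathcal X}$ as the quotient by the relative kernel $K(\mM)/\mathcal Q$, i.e.\ $\mathcal X/(K(\mM)/\mathcal{Q})$ up to twist). Alternatively, compare generic fibers and use Corollary \ref{endo}. Either way one obtains $\theta\in{\rm Aut}(A)$ with $\theta^*\mL\cong\mL'$ by (\ref{prL}).
\end{itemize}

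\emph{Fourth bijection.} The correspondence ${\rm MB}(p)\leftrightarrow{\mathcal S}_{2,\bF_p}$ is given by taking the image of the classifying map $\phi:{\bf P}^1\to A_{2,1}$.
\begin{itemize}
\item Surjectivity. This is \cite[Corollary 2.2]{KO}.
\item Injectivity. Suppose two families have the same image component. By \cite{MB}, $\phi$ is generically injective onto its image, or at least the component determines the superspecial points and the generic fiber. Then the family is recovered as the pullback of the universal family, normalized to ${\bf P}^1$, so the two families agree.
\end{itemize}
As a consistency check, the count then matches Theorem \ref{KLO}(ii) with the class number $H_2(1,p)$.

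\emph{Main obstacle.} The hardest step is injectivity in the third bijection: showing that an isomorphism of Moret-Bailly families descends from, and lifts to, an automorphism of $A$ carrying $\mL$ to $\mL'$. The plan is to recover $A\times{\bf P}^1\to{\mathcal X}$ canonically as the quotient by the relative kernel of $\mM$, and then apply Proposition \ref{rational-morphism}.
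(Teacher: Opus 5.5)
The paper gives no argument of its own here. It only cites \cite[Section 2.3]{IKO} and \cite[Theorem 2.7, Corollary 3.8]{KO}. Your first two steps are correct and amount to the lattice-theoretic part of \cite{IKO}. The gap is in both geometric injectivity claims, which are exactly the Katsura--Oort input.

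\textbf{Third step.} The object ``$K(\mathcal M)/\mathcal Q$'' does not make sense, for two reasons.
\begin{itemize}
\item $\mathcal M$ agrees fibrewise with ${\mathcal O}_{\mathcal X}(D)$, a principal polarization. Indeed $p^2=\deg\varphi_{\mathcal L}=\deg\pi_t\cdot\deg\varphi_{\mathcal M_t}\cdot\deg\widehat{\pi}_t$ forces $\deg\varphi_{\mathcal M_t}=1$, so $K(\mathcal M_t)=0$.
\item $\mathcal Q$ is a subgroup scheme of $A\times\mathbf{P}^1$, not of $\mathcal X$.
\end{itemize}
Your fallback via Corollary \ref{endo} only concerns endomorphisms of a single family, so it produces no automorphism of $A$.

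\textbf{Fourth step.} The map $\phi:\mathbf{P}^1\to{\mathcal A}_{2,1}$ is in general not generically injective. Any $\theta\in{\rm Aut}(A)$ with $\theta^*\mathcal L\cong\mathcal L$ preserves $\ker F_A=\alpha_p\times\alpha_p$. It therefore acts on the parameter line through ${\rm PGL}_2(\mathbb{F}_{p^2})$, and $X_t\cong X_{\bar\theta(t)}$ as polarized surfaces. So $\phi$ factors through the quotient of $\mathbf{P}^1$ by a finite group that is non-trivial in general. Moreover ${\mathcal A}_{2,1}$ is only a coarse space, so there is no universal family over a component to pull back.

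\textbf{The missing idea.} A single non-superspecial polarized fibre already determines the class of $(A,\mathcal L)$.
\begin{itemize}
\item For $t\notin\mathbf{P}^1(\mathbb{F}_{p^2})$ the fibre $X_t=A/\iota_t(\alpha_p)$ has $a$-number $1$. Hence the kernel of $\rho_t:X_t\to A/\ker F_A\cong A^{(p)}$ is the unique copy of $\alpha_p$ in $X_t$, and $\rho_t\circ\pi_t=F_A$.
\item An isomorphism $f:(X_t,\mathcal M_t)\cong(X'_{t'},\mathcal M'_{t'})$ respects these $\alpha_p$'s. It therefore induces $\theta^{(p)}$ on $A^{(p)}$ for some $\theta\in{\rm Aut}(A)$.
\item Then $f\circ\pi_t-\pi'_{t'}\circ\theta$ maps $A$ into $\ker\rho'_{t'}\cong\alpha_p$, so it is zero.
\item Consequently $\mathcal L=\pi_t^*f^*\mathcal M'_{t'}=\theta^*(\pi'_{t'})^*\mathcal M'_{t'}=\theta^*\mathcal L'$ in ${\rm NS}(A)$.
\end{itemize}
Two families with the same image component share such a fibre, since there are only finitely many superspecial points. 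This gives injectivity of the composite $\mathcal D/{\rm Aut}(A)\to{\mathcal S}_{2,\bF_p}$, and hence of your third and fourth maps at once. The relative $\alpha_p$ given by $(\alpha_p\times\alpha_p\times\mathbf{P}^1)/\mathcal Q\subset\mathcal X$ is also the right object for the ``lift through $\pi,\pi'$'' you had in mind.

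\textbf{A counting shortcut.} You have surjections $\mathcal D/{\rm Aut}(A)\to{\rm MB}(p)\to{\mathcal S}_{2,\bF_p}$, and both end sets have $H_2(1,p)$ elements, by your steps 1--2 and Theorem \ref{KLO}(ii). Counting would then force both maps to be bijections. However, \cite{KO} obtains that count through this very correspondence, so the shortcut only relocates the missing input.
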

For details on this subsection, see \cite[Section 2.3]{IKO} and \cite[Theorem 2.7 and Corollary 3.8]{KO}.

\subsection{Relative isogeny graphs}
For two  Moret-Bailly families $({\mathcal X}, \mM)$ and
 $({\mathcal X}', \mM')$ which corresponds to $(A, \mL)$ and 
 $(A/G\cong A, \mL')$ for a maximal totally isotropic subgroup $G$ of $K_2(\mL)$, 
 by Lemma \ref{RichelotIsogeny}, we have a relative Richelot isogeny (a relative $(\ell, \ell)$-isogeny) $f$:
%$$
%\begin{array}{ccc}
%     {\mathcal X} &\stackrel{f}{\longrightarrow} & {\mathcal X}'  \\
%    q \downarrow &           &  \downarrow  q' \\
%    {\mathbf P}^1  & \longrightarrow     & {\mathbf P}^1.
%\end{array}
%$$

\[
\xymatrix{
{\mathcal X} \ar[r]^f \ar[dr]_{q}^{\circlearrowright} & {\mathcal X}' \ar[d]^{q'} \\
& {\mathbf P}^1
}
\] 
(see (\ref{K1K2}) for $K_2(\mL)$). The group $G$ is uniquely extended to a relative 
finite \'etale group subscheme $\mathcal{G}$ of $\mathcal X$. Then, we define the 
weight of $f$ by 
\begin{equation}\label{weight}
w(f):=|({\rm RA}(({\mathcal X}, \mM)/ {\mathbf P}^1)/{\rm Stab}_{{\rm RA}(({\mathcal X}, \mM)/ {\mathbf P}^1)}(\mathcal{G}))|
\end{equation}
where ${\rm RA}(({\mathcal X}, \mM)/ {\mathbf P}^1)={\rm Aut}(({\mathcal X}, \mM)/ {\mathbf P}^1)/\{\pm 1\}$ 
is the relative reduced automorphism group of $({\mathcal X}, \mM)$ and 
${\rm Stab}_{{\rm RA}(({\mathcal X}, \mM)/ {\mathbf P}^1)}(\mathcal{G})$ stands for the stabilizer of 
$\mathcal{G}$ 
inside ${\rm RA}(({\mathcal X}, \mM)/ {\mathbf P}^1)$. 

We now define the supersingular relative isogeny graph ${\mathcal G}^{{\rm MB}}(\ell, p)$ of Moret-Bailly families as follows:
\begin{itemize}
\item the vertices are the elements of ${\rm MB}(p)$, 
\item the oriented edges are relative $(\ell, \ell)$-isogenies and   
two edges $f_1,f_2$ starts from $({\mathcal X}, \mM)$ 
associated to maximal totally isotropic subgroups $G_1,G_2$ of $K_2(\mL)$ with unique extensions 
$\mathcal {G}_1,\mathcal{G}_2$ inside ${\mathcal X}$ are identified if 
 $$\mathcal{G}_1=g\mathcal{G}_2$$ for some $g\in {\rm RA}(({\mathcal X}, \mM)/ {\mathbf P}^1)$, 
 and 
 \item each edge $f$ has the weight $w(f)$ which is well-defined under the above identification. 
\end{itemize}

\begin{thm}\label{graphconst} The graph ${\mathcal G}^{{\rm MB}}(\ell, p)$ is a 
finite oriented graph such that 
\begin{enumerate}
\item the number of vertices is equal to the class number $H_2(1, p)$ of non-principal genus of 
the quaternion hermitian form in $B^2$, and 
\item the weighted sum $\ds\sum_{v'\in V,\ f:v\to v'}w(f)$ of the edges outgoing from any fixed vertex $v$ is equal to $N_2(\ell):=(\ell + 1)(\ell^2 + 1)$. 
\end{enumerate}
\end{thm}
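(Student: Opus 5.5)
For part (1), I will use Theorem \ref{1to1}, which identifies ${\rm MB}(p)$ with ${\mathcal L}_2(1,p)$ modulo global equivalence. The latter has cardinality $H_2(1,p)$ by definition. By construction, the vertex set of ${\mathcal G}^{{\rm MB}}(\ell,p)$ is ${\rm MB}(p)$, so the number of vertices is $H_2(1,p)$.

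Finiteness of the edge set follows once part (2) is established. From each vertex there are only finitely many maximal isotropic subgroups $G$ of $K_2(\mL)\simeq(\Z/\ell\Z)^4$. I also need that the target of each edge is again a vertex. This is the construction of Subsection \ref{reliso}: there $\mL'\in{\mathcal D}$ on $A/G\cong A$, and Lemma \ref{RichelotIsogeny} gives a relative $(\ell,\ell)$-isogeny to the Moret-Bailly family attached to $\mL'$.

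For part (2), fix a vertex $v=({\mathcal X},\mM)$ corresponding to $\mL\in{\mathcal D}$.

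First, I count the maximal isotropic subgroups of $K_2(\mL)$. The commutator pairing of the theta group ${\mathcal G}(\mL^{\otimes\ell})$ restricted to $K_2(\mL)$ is a nondegenerate alternating form, since $K_2(\mL)$ is the prime-to-$p$ part of $K(\mL^{\otimes\ell})$, where the pairing is perfect. So $K_2(\mL)$ is a $4$-dimensional symplectic space over $\mathbb{F}_\ell$, and its maximal isotropic subgroups are the Lagrangian planes. Their number is $(\ell^4-1)(\ell^3-\ell)/((\ell^2-1)(\ell^2-\ell))=(\ell^2+1)(\ell+1)=N_2(\ell)$.

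Second, each $G$ extends uniquely to a relative finite étale subgroup scheme ${\mathcal G}\subset{\mathcal X}$. This holds because $\pi$ is purely inseparable and ${\mathcal X}/{\mathbf P}^1[\ell]\cong(\Z/\ell\Z)^4$, as noted in Subsection \ref{family}. This gives a bijection between Lagrangians $G$ and such ${\mathcal G}$, and it is compatible with the action of ${\rm RA}(({\mathcal X},\mM)/{\mathbf P}^1)$.

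Third, I apply the orbit-stabilizer theorem. Edges out of $v$ are by definition the ${\rm RA}$-orbits of the ${\mathcal G}$'s. The weight $w(f)$ is exactly the size of the orbit of ${\mathcal G}$. Summing over orbits, that is, over all edges out of $v$ grouped by target $v'$, gives
\[
\sum_{v',\,f:v\to v'}w(f)=\#\{\text{Lagrangians in }K_2(\mL)\}=N_2(\ell).
\]

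I expect the main obstacle to be the second step: checking that the action of ${\rm RA}$ on relative subgroup schemes ${\mathcal G}$ really corresponds to an action on $K_2(\mL)$ preserving the isotropy condition. Here I would use Corollary \ref{endo}, which gives ${\rm Aut}({\mathcal X}/{\mathbf P}^1)\subset{\rm Aut}(A)$. Elements of this group preserve $\mM$, hence $\mL$ up to algebraic equivalence. They therefore preserve $K(\mL^{\otimes\ell})$ and the commutator pairing, so the action on Lagrangians is well defined. The element $-1$ acts trivially on subgroups, so the action descends to ${\rm RA}$.

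A subtler point is whether two orbits could yield the same target with different isogenies. This is harmless, because the sum is over edges, not over targets.
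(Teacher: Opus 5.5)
Your proposal is correct and follows the paper's own argument. The vertex count comes from Theorem~\ref{1to1}, and the weighted out-degree equals the number $(\ell+1)(\ell^2+1)$ of maximal isotropic subgroups of $K_2(\mL)\simeq(\Z/\ell\Z)^4$, since each weight $w(f)$ is by definition the size of an ${\rm RA}$-orbit. The paper states this in two lines; your extra checks (nondegeneracy of the commutator pairing on $K_2(\mL)$, unique extension of $G$ to $\mathcal{G}$, and ${\rm RA}$-compatibility so that orbit--stabilizer applies) are details the paper leaves implicit.
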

\begin{proof}
By Theorem \ref{1to1}, the number of vertices is equal to $H_2(1,p)$. 
Since the number of all maximal isotropic subgroups of $K_2(\mL)\simeq (\Z/\ell\Z)^4$ is 
equal to $(\ell + 1)(\ell^2 + 1)$, the second claim follows from the definition of 
the weight. 
\end{proof}

\begin{rmk}\label{notsimple}
Note that the graph ${\mathcal G}^{{\rm MB}}(\ell, p)$ may have loops, namely, 
may not be simple. 
\end{rmk}

\section{An adelic description of ${\mathcal G}^{{\rm MB}}(\ell, p)$}\label{AdelicDesc}

In this section, we focus on $H:=G_2$. 
Let $\A=\A_\Q$ be the ring of adeles of $\Q$ and $\A_f$ be the finite part of $\A$.
For each prime $\ell\neq p$, there exists an isomorphism  
\begin{equation}\label{Ghitza}
H(\Q_\ell)\simeq \GSp_4(\Q_\ell)
\end{equation} 
satisfying that
\begin{itemize}
\item $H(\Z_\ell)\simeq \GSp_4(\Z_\ell)$, 
\item the similitude is preserved, 
\item $\diag(1,\ell)$ corresponds to $\diag(1,1,\ell,\ell)$, and 
\item the transpose conjugate on $H(\Q_\ell)$ 
corresponds to $g\mapsto J^{-1}\cdot g^t \cdot J$ on $\GSp_4(\Q_\ell)$
\end{itemize}
(cf.\ Lemma 4 of \cite{Ghitza}).  

For an $\calO$-lattice $L$ and each rational prime $\ell$, put $K_{\ell}(L):=
\{\gamma_{\ell}\in H(\Q_{\ell})\ |\ (L\otimes_\Z\Z_{\ell})\gamma_{\ell}=L\otimes_\Z\Z_{\ell}\}$ which is an open compact subgroup of $H(\Q_{\ell})$. Then $K(L):=\ds\prod_{\ell}K_{\ell}(L)$ makes up an 
open compact subgroup of $H(\A_f)$. In particular, $K_\ell(L)=H(\Z_\ell)$ for 
all but finitely many $\ell$ different from $p$.  

For each element 
$\gamma=(\gamma_{\ell})_{\ell}$ of $H(\A_f)$ and an $\calO$-lattice $L$, put 
$$L\gamma:=\bigcap_{{\ell}<\infty}L\gamma_{\ell}\cap B^2$$
and it is easy to see that $L\gamma$ is also an $\calO$-lattice which is locally equivalent to 
$L$ at each prime $\ell$. 
%Hence we have  
%\begin{equation}\label{desc-general}
%K(L)\bs H(\A_f)/H(\Q)\stackrel{\sim}{\lra} \mL(L)/\sim,\ K(L)\gamma H(\Q)\mapsto  [L\gamma]
%\end{equation}
%where $H(\Q)$ is diagonally embedded in $H(\A_f)$ as $h\mapsto (h)_p$. 

Let us fix an element $L_{{\rm npr}}$ in $\cL_2(1,p)$ so that $K_p(L_{{\rm npr}})$  is the stabilizer of $N_p$ inside $H(\Q_p)$ (see (\ref{Np}) for $N_p$ when $n=2$). By definition of 
$\cL_2(1,p)$, clearly,   
$K_\ell(L_{{\rm npr}})=H(\Z_\ell)$ for any $\ell$ different from $p$.   
Then, we have  
\begin{equation}\label{desc1}
K(L_{{\rm npr}})\bs H(\A_f)/H(\Q)\stackrel{\sim}{\lra} \cL_2(1,p), 
K(L_{{\rm npr}})\gamma H(\Q)\mapsto  [L_{{\rm npr}}\gamma]
\end{equation}
where $H(\Q)$ is diagonally embedded in $H(\A_f)$ as $h\mapsto (h)_p$. 

For each prime $\ell\neq p$, put $K(L_{{\rm npr}})^{(\ell)}=\ds\prod_{q\neq \ell}K_q(L_{{\rm npr}})$. Clearly, $K(L_{{\rm npr}})=K(L_{{\rm npr}})^{(\ell)}\times H(\Z_\ell)$. 
We also define
$$\G_{\mathrm{npr}}^H:= H(\Q)\cap K(L_{{\rm npr}})^{(\ell)}.$$
and denote by $\G_{\mathrm{npr}}^{\GSp_4}$ the corresponding subgroup of $\GSp_4(\Q_\ell)$ under 
(\ref{Ghitza}).

\begin{prop}\label{iko-mb} For each prime $\ell\neq p$, 
there is a one-to-one correspondence between 
$\cL_2(1,p)$ and $$\G_{{\rm npr}}^{\GSp_4}\bs \GSp_4(\Q_\ell)/Z_{GSp_4}(\Q_\ell) \GSp_4(\Z_\ell).$$
\end{prop}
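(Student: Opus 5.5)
The plan is strong approximation for the derived group of $H$, plus control of the similitude. Throughout, "$\cL_2(1,p)$" is read as the set of global equivalence classes, which is how (\ref{desc1}) uses it. Write $K=K(L_{\rm npr})=K^{(\ell)}\times H(\Z_\ell)$ with $K^{(\ell)}=K(L_{\rm npr})^{(\ell)}$.

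By (\ref{desc1}) it suffices to construct a bijection
$$K\bs H(\A_f)/H(\Q)\;\longleftrightarrow\;\G^H_{\rm npr}\bs H(\Q_\ell)/Z_H(\Q_\ell)H(\Z_\ell),$$
and then transport it through (\ref{Ghitza}). Because of the right action convention, I would first invert all elements. This turns the double coset into $H(\Q)\bs H(\A_f)/K$, and $\G^H_{\rm npr}\bs H(\Q_\ell)/\cdots$ becomes $\cdots\bs H(\Q_\ell)/\G^H_{\rm npr}$. I would keep track of this cosmetic reversal but not dwell on it.

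The map is the natural one, induced by the inclusion $H(\Q_\ell)\hookrightarrow H(\A_f)$ at the $\ell$-component. It sends $\G^H_{\rm npr}\,x\,H(\Z_\ell)$ to the class of $(1,\dots,1,x,1,\dots)$.

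The steps in order are as follows.

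(a) Well-definedness and injectivity. Suppose $x,y\in H(\Q_\ell)$ have the same image, so $y=k x\gamma$ with $k\in K$ and $\gamma\in H(\Q)$. Comparing components away from $\ell$ gives $\gamma\in K^{(\ell)}$, hence $\gamma\in\G^H_{\rm npr}$. Comparing $\ell$-components then gives $y\in H(\Z_\ell)\,x\,\G^H_{\rm npr}$. The center needs separate handling. For $z\in Z_H(\Q_\ell)=\Q_\ell^\times$, write $z=\ell^a u$ with $u\in\Z_\ell^\times$. The unit $u$ lies in $H(\Z_\ell)$. The scalar $\ell^a\in Z_H(\Q)$ is a unit at every $q\neq\ell$, including $q=p$, where $\ell$ is a unit in $\calO_p$ and hence stabilizes $N_p$. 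So $\ell^a\in\G^H_{\rm npr}$, and quotienting by the center is harmless.

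(b) Surjectivity. This is the main obstacle: I must show $H(\A_f)=K^{(\ell)}\,H(\Q_\ell)\,H(\Q)$ (in the appropriate order). I would do this in three parts.

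First, the similitude. The group $\nu(H(\Q))$ consists of positive rationals, since $h$ is definite. The problem is to show
$$\A_f^\times=\nu(K^{(\ell)})\,\Q_\ell^\times\,\Q_{>0}.$$
For $q\neq p,\ell$ we have $\nu(H(\Z_q))=\Z_q^\times$. At $p$, $\nu(K_p)$ should be all of $\Z_p^\times$: the element $\diag(u,u)$ with reduced norm $u^2$ covers the squares, and a unit of $\calO_p$ with nonsquare norm covers the rest, which is possible because the reduced norm from the division algebra is surjective onto $\Z_p^\times$. Since $\A_f^\times=\Q_{>0}\widehat{\Z}^\times$ and $\widehat{\Z}^\times\subset\nu(K^{(\ell)})\,\Z_\ell^\times$, the similitude surjects.

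Second, after multiplying by suitable elements, I reduce to an adelic point of the kernel $H^1$ of $\nu$. This $H^1$ is an inner form of $\Sp_4$, hence simply connected and almost simple.

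Third, I apply strong approximation with respect to the place $\ell$. This is available because $H^1(\Q_\ell)\simeq\Sp_4(\Q_\ell)$ is noncompact. It gives density of $H^1(\Q)H^1(\Q_\ell)$ in $H^1(\A_f)$, so $H^1(\A_f)=H^1(\Q)H^1(\Q_\ell)\big(K\cap H^1(\A_f)\big)$.

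One point needs care at the first part: the rational element needed to fix the similitude must be realized in $H(\Q)$ itself. For this I would use scalars in $Z_H(\Q)=\Q^\times$, whose similitudes are squares, together with $H(\Q_\ell)$ to absorb powers of $\ell$. Alternatively, I would check directly that $\nu(H(\Q))=\Q_{>0}$: by Hasse–Witt theory for quaternionic hermitian forms, every positive rational is a similitude factor.

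(c) Transport the bijection via the isomorphism (\ref{Ghitza}). It identifies $H(\Z_\ell)$ with $\GSp_4(\Z_\ell)$ and centers with centers, which gives the stated description with $\G^{\GSp_4}_{\rm npr}$.
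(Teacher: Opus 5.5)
Your proposal is correct and is essentially the paper's argument. Like the paper, you use strong approximation for the simply connected derived group $H^1$ with respect to $\{\infty,\ell\}$, together with similitude control via $\nu(K(L_{\rm npr})^{(\ell)})=(\widehat{\Z}^\times)^{(\ell)}$, to get $H(\A_f)=H(\Q)\bigl(K(L_{\rm npr})^{(\ell)}\times H(\Q_\ell)\bigr)$; you then swap sides (the paper uses $g\mapsto\bar g^t=\nu(g)g^{-1}$) and transport through (\ref{Ghitza}). You are also more careful than the paper in justifying why $Z_{GSp_4}(\Q_\ell)$ may be inserted, namely because $\ell^a I_2\in\G^H_{\rm npr}$.

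One correction: your first option for realizing the rational similitude in $H(\Q)$ fails. Scalars have square similitudes, and $(\Q_{>0})^2\ell^{\Z}\neq\Q_{>0}$, so you must use your second option. That option holds concretely because $\nu(bI_2)=\mathrm{nrd}(b)$ and $\mathrm{nrd}(B^\times)=\Q_{>0}$ for the definite algebra $B$; the paper also uses this fact implicitly when passing from $H^1$ to $H$.
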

\begin{proof}
Let $H^1=[H,H]$ be the derived group of $H$.  
For any algebraic closed field $F$, $H^1(F)=\Sp_4(F)$. 
Since $Sp_4$ is simply connected as an algebraic group over $\Q$, so is $H^1$. 
Let $\A^{(\ell)}_f$ be the finite adeles of $\Q$ outside $\ell$. 
By the strong approximation theorem (cf.\ Theorem 7.12, p.427 in Section 7.4 of \cite{PR}) for $H^1$ with respect to $S=\{\infty,\ell\}$ and using the exact sequence 
$$1\lra H^1\lra H\stackrel{\nu}{\lra} GL_1\lra 1,$$
with the fact that $\nu(K(L_{{\rm npr}})^{(\ell)})=(\widehat{\Z}^\times)^{(\ell)}$, 
we conclude that $H(\Q)\bs H(\A^{(\ell)}_f)/K(L_{{\rm npr}})^{(\ell)}=\{1\}$. 
It yields a decomposition 
\begin{equation}\label{sapprox1}
H(\A_f)=H(\A^{(\ell)}_f)\times H(\Q_\ell)=H(\Q)(K(L_{{\rm npr}})^{(\ell)}\times H(\Q_\ell)).
\end{equation}

Combining (\ref{desc1}) with (\ref{sapprox1}), we have 
\begin{eqnarray}\label{sapprox2}
\cL(1,p)&\stackrel{\sim}{\lla}&  K(L_{{\rm npr}}) \bs H(\A_f)/H(\Q)\nonumber \\
&\stackrel{g\mapsto \bar{g}^t \atop\sim}{\lra} & H(\Q)\bs H(\A_f)/K(L_{{\rm npr}}) \nonumber \\
&=&H(\Q) \bs H(\Q)(K(L_{{\rm npr}})^{(\ell)}\times H(\Q_\ell))/
K(L_{{\rm npr}})^{(\ell)}\times H(\Z_\ell)\\
&=&(H(\Q)\cap K(L_{{\rm npr}})^{(\ell)}) \bs 
H(\Q_\ell)/ H(\Z_\ell) =\G_{\mathrm{npr}}^H \bs 
H(\Q_\ell)/ H(\Z_\ell) \nonumber \\
&\stackrel{(\ref{Ghitza})}{\simeq} &\G_{{\rm npr}}^{\GSp_4}\bs \GSp_4(\Q_\ell)/
\GSp_4(\Z_\ell) =\G_{{\rm npr}}^{\GSp_4}\bs \GSp_4(\Q_\ell)/Z_{GSp_4}(\Q_\ell) 
\GSp_4(\Z_\ell). \nonumber 
\end{eqnarray}
We remark that at the last line $Z_{GSp_4}(\Q_\ell)$ is intentionally inserted due to the formulation 
in terms of the Bruhat-Tits building of $PGSp_4=GSp_4/Z_{GSp_4}$ handled later on.
\end{proof}

\subsection{A connection to $\ell$-adic symmetric spaces a la Jordan-Zaytman}\label{lJZ}
Let $\ell\neq p$ be a prime.  
As in \cite[Section 2.5]{ATY}, we construct several natural bijections between objects in Theorem \ref{1to1} and 
$$\G_{{\rm npr}}^{\GSp_4}\bs \GSp_4(\Q_\ell)/Z_{GSp_4}(\Q_\ell) 
\GSp_4(\Z_\ell).$$ 
Then, we also check the compatibility of the Hecke operators  
at $\ell$ which are defined on each of objects and on 
the $\ell$-adic symmetric space 
$\GSp_4(\Q_\ell)/Z_{GSp_4}(\Q_\ell) \GSp_4(\Z_\ell)=
\PGSp_4(\Q_\ell)/\PGSp_4(\Z_\ell)$ which can be viewed as 
a special 1-complex of the Bruhat-Tits building of $\PGSp_4(\Q_\ell)$ (\cite[Section 3.3]{ATY}).

As we already stated (see Theorem \ref{1to1}), the set ${\rm MB}(p)$ of Moret-Bailly families up to automorphism is 
in one-to-one correspondence with the set 
\begin{equation}\label{D}
\widetilde{\mathcal D}(A) = \{(A, {\mathcal L}) \mid {\mathcal L}\in {\mathcal D}\}/{\rm Aut}(A).
\end{equation}
Moreover, ${\ell}$-Rechelot isogenies between Moret-Bailly families correspond 
to Rechelot isogenies between $(A, {\mathcal L)}$'s associated with the Moret-Bailly families.
Therefore, in this section, we work with the set $\widetilde{\mathcal D}$ rather than 
$\mathcal{S}_{2,\bF_p}$ or $\MB(p)$. This allows us to investigate the structure of $\Gamma_{{\rm npr}}$ precisely (for the definition of $\Gamma_{{\rm npr}}$, see (\ref{dagger}) below), 
whereas such an analysis does not arise in the case of the principal genus. 

\subsubsection{An interpretation of $\Gamma_{{\rm npr}}$}\label{intGammanpr}
We fix a polarized abelian surface $(A, {\mathcal L})$ with $A = E \times E$ 
and ${\mathcal L} \in {\mathcal D}$,
where $E$ is a supersingular elliptic curve.
Put $g = j({\mathcal L}) \in {\Lambda}$. Then, there exists $x \in \GL_2(B)$ and $m \in \Q_{>0}$
such that $g = m x{\bar{x}}^t$ and ${\mathcal{O}}^2x$ represents an element of the non-principal genus 
${\mathcal{L}}_2(1, p)$. We fix $L_{{\rm npr}} ={\mathcal{O}}^2x$ as a representative of
the non-principal genus.
For the Moret-Bailly family $q:(\mathcal{X},\mM)\lra \bf P^1$ corresponding to
$(A, {\mathcal L})$, we put
\begin{equation}\label{dagger}
\G_{{\rm npr}}({\mathcal X}/\mathbb{P}^1)=\G_{{\rm npr}}(A,\mL):=\{f\in ({\rm End}(A)\otimes_\Z \Z[1/\ell])^\times\ |\ f\circ f^\dagger=
f^\dagger\circ f\in \Z[1/\ell]^\times {\rm id}_A\}.
\end{equation}
Here, ${}^\dagger$ denotes the Rosati involution with respect to ${\mathcal L}$, defined by
$$
        f^{\dagger} = \varphi_{\mathcal L}^{-1}\circ \hat{f} \circ \varphi_{\mathcal L},
$$
where $\hat{f}$ is the dual homomorphism of $f$.

\begin{lem}\label{pull-back}
Let $\ell$ be a prime number distinct from $p$. Then, for $f \in {\rm End}(A)\otimes_{\Z}{\Q}$
and an integer $n$, we have
$f\circ f^{\dagger} = \ell^n {\rm id}_{A}$ if and only if $f^{*}{\mathcal L} = \ell^n {\mathcal L}$.
\end{lem}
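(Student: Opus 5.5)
We will prove the equivalence by translating both conditions into homomorphisms $A\to\widehat{A}$ and using that $\varphi_{\mathcal L}$ is an isogeny, hence invertible in ${\rm Hom}(A,\widehat A)\otimes_\Z\Q$. Since both sides are linear in rational scalars, we may first clear denominators: write $f=f_0/N$ with $f_0\in{\rm End}(A)$ and $N$ a positive integer. Then $f\circ f^\dagger=N^{-2}f_0\circ f_0^\dagger$, and on the Néron–Severi side we set $f^*\mathcal L:=N^{-2}f_0^*\mathcal L$ in ${\rm NS}(A)\otimes\Q$. It therefore suffices to treat $f\in{\rm End}(A)$ and $n\in\Z$, replacing $\ell^n$ by the rational number $\ell^n N^2$ throughout; this changes nothing in the argument.

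The key identity, valid for $f\in{\rm End}(A)$, is
$$\varphi_{f^*\mathcal L}=\widehat{f}\circ\varphi_{\mathcal L}\circ f$$
(Mumford \cite[Section 15/23]{M}), together with $\varphi_{\mathcal L^{\otimes m}}=m\,\varphi_{\mathcal L}$, extended $\Q$-linearly to ${\rm NS}(A)\otimes\Q$. We also use that $\mathcal L\mapsto\varphi_{\mathcal L}$ is injective on ${\rm NS}(A)\otimes\Q$, which holds because $j$ in Theorem \ref{intersection} is a bijection onto ${\rm H}$.

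With this identity, $f^*\mathcal L=\ell^n\mathcal L$ is equivalent to
$$\widehat f\circ\varphi_{\mathcal L}\circ f=\ell^n\varphi_{\mathcal L}.$$
Composing on the left with $\varphi_{\mathcal L}^{-1}$ (in the rationalized Hom), this becomes $f^\dagger\circ f=\ell^n\,{\rm id}_A$. Thus $f^*\mathcal L=\ell^n\mathcal L$ if and only if $f^\dagger f=\ell^n$.

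It remains to pass from $f^\dagger f$ to $f f^\dagger$. Suppose $f^\dagger f=\ell^n$ with $\ell^n\neq0$. Then $f$ is invertible in ${\rm End}(A)\otimes\Q$ with $f^{-1}=\ell^{-n}f^\dagger$, so $ff^\dagger=\ell^n$ as well. The converse direction is symmetric. Hence $f\circ f^\dagger=\ell^n\,{\rm id}_A$ if and only if $f^*\mathcal L=\ell^n\mathcal L$.

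The only delicate point is bookkeeping. First, the equality $f^*\mathcal L=\ell^n\mathcal L$ must be read in ${\rm NS}(A)$, or in ${\rm NS}(A)\otimes\Q$ when $n<0$ or $f$ is non-integral, i.e. up to algebraic equivalence. This is harmless because $\varphi_{\mathcal L}$ only depends on the class of $\mathcal L$ in ${\rm NS}(A)$. Second, one could alternatively verify the statement via the matrix formula $j(r^*L)=\bar r^tj(L)r$ of Theorem \ref{intersection}, using that the Rosati involution with respect to $\mathcal L$ is $f\mapsto g^{-1}\bar f^tg$ with $g=j(\mathcal L)$. Then $f^\dagger f=\ell^n$ reads $\bar f^tgf=\ell^n g$, which is exactly $j(f^*\mathcal L)=\ell^n j(\mathcal L)$. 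I would include this as a cross-check, since it is the form used later.
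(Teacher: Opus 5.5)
Your proof is correct and follows the paper's argument: the paper likewise rewrites $\varphi_{f^*\mathcal L}=\widehat f\circ\varphi_{\mathcal L}\circ f$ as $\varphi_{\mathcal L}\circ f^{\dagger}\circ f$ and compares it with $\varphi_{\ell^n\mathcal L}=\ell^n\varphi_{\mathcal L}$. Your additional bookkeeping fills in steps the paper leaves implicit: injectivity of $\mathcal L\mapsto\varphi_{\mathcal L}$ on ${\rm NS}(A)\otimes\Q$, the passage from $f^\dagger\circ f$ to $f\circ f^\dagger$, and clearing denominators for non-integral $f$.
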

\begin{proof}
This lemma follows from the following computation:
$$
  \varphi_{f^{*}{\mathcal L}}
 = \hat{f}\circ \varphi_{\mathcal L}\circ f 
 = \varphi_{\mathcal L}\circ \varphi_{\mathcal L}^{-1}\circ\hat{f}\circ \varphi_{\mathcal L}\circ f
    = \varphi_{\mathcal L}\circ f^{\dagger}\circ f = \varphi_{\mathcal L}\circ \ell^{n}{\rm id}_{A} 
     = \varphi_{\ell^{n} {\mathcal L}}.
$$
\end{proof}
\begin{lem}\label{x}
The homomorphism
$$
\begin{array}{rccc}
     \alpha : &\G_{{\rm npr}}(A,\mL) & \longrightarrow & \Gamma_{{\rm npr}}^H\\
             &  r &\mapsto &  x^{-1} {\bar{r}}^tx
\end{array}
$$
is an isomorphism.
\end{lem}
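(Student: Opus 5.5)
Throughout, identify $\mathrm{End}(A)=M_2(\mathcal O)$. I will translate the conditions defining $\Gamma_{\rm npr}(A,\mathcal L)$ into matrix identities via Theorem \ref{intersection}, and then conjugate by $x$.

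\emph{Step 1: the Rosati involution as a matrix.} Write $g=j(\mathcal L)=\varphi_\Theta^{-1}\varphi_{\mathcal L}$. The Rosati involution for $\Theta$ is $r\mapsto\bar r^t$; this is the content of $j(r^*L)=\bar r^t j(L) r$ applied to $L=\Theta$. From $\varphi_{\mathcal L}=\varphi_\Theta g$ one gets
\[
f^\dagger=\varphi_{\mathcal L}^{-1}\hat f\varphi_{\mathcal L}=g^{-1}\,\bar f^{t}\,g
\]
in $M_2(B)$. Hence the condition $f f^\dagger=f^\dagger f=c\in\Z[1/\ell]^\times$ becomes $f g^{-1}\bar f^t g=c$, i.e. $\bar f^t g f=c\,g$. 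This is consistent with Lemma \ref{pull-back} and Theorem \ref{intersection}, since $j(f^*\mathcal L)=\bar f^t g f$.

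\emph{Step 2: conjugate by $x$.} Put $\gamma=x^{-1}\bar r^t x$. Using $g=m\,x\bar x^t$, so that $x\bar x^t=g/m$, we compute
\[
\gamma\bar\gamma^t=x^{-1}\bar r^t x\bar x^t r\,\bar x^{-t}=m^{-1}x^{-1}(\bar r^t g r)\bar x^{-t}.
\]
By Step 1 this equals $c\,m^{-1}x^{-1}g\bar x^{-t}=c$. So $\gamma$ is a similitude in $H(\Q)=G_2(\Q)$ with $\nu(\gamma)=c\in\Z[1/\ell]^\times$. The computation reverses: given $\gamma\in H(\Q)$, set $r=\overline{(x\gamma x^{-1})}^{\,t}$, which satisfies $\bar r^tgr=\nu(\gamma)g$. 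The identity $ff^\dagger=f^\dagger f$ is then automatic in the invertible case. The map $\alpha$ is an anti-homomorphism composed with transpose-conjugation, hence a homomorphism, since transpose-conjugation reverses products. It is clearly injective.

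\emph{Step 3: integrality, the main obstacle.} It remains to show that $r\in(M_2(\mathcal O)\otimes\Z[1/\ell])^\times$ if and only if $\gamma\in K_q(L_{\rm npr})$ for every $q\neq\ell$. At $q\neq p,\ell$, $x$ lies in $GL_2(\mathcal O_q)$ up to a scalar, because $L_{{\rm npr},q}\sim\mathcal O_q^2$. So $r_q\in GL_2(\mathcal O_q)$ if and only if $\bar r^t_q$ stabilizes $\mathcal O_q^2$, if and only if $\gamma$ stabilizes $\mathcal O_q^2x=L_{{\rm npr},q}$. Here we use that $\mathcal O_q^2 y=\mathcal O_q^2$ with $y=\bar r^t$ means $y\in GL_2(\mathcal O_q)$, and that $\mathcal O_q^2 x\gamma=\mathcal O_q^2 y x$.

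The delicate prime is $q=p$. There $L_{{\rm npr},p}=\mathcal O_p^2x$ is equivalent to $N_p$, and we need $\mathcal O_p^2\bar r^t=\mathcal O_p^2$, i.e. $r\in GL_2(\mathcal O_p)$, exactly when $\gamma$ stabilizes $\mathcal O_p^2x$. The identity $\mathcal O_p^2x\gamma=\mathcal O_p^2\bar r^tx$ gives this immediately, without needing the structure of $N_p$. The real subtlety is ensuring that $\gamma$ preserving $L_{\rm npr}$ is the same as $\gamma\in K_p(L_{\rm npr})$ as defined; this holds by definition. The remaining care is that the similitude is a unit at $p$, which follows because $\gamma$ stabilizing a lattice forces $\nu(\gamma)\in\Z_p^\times$.

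Combining the local conditions over all $q\neq\ell$ gives $\alpha(\Gamma_{\rm npr}(A,\mathcal L))=H(\Q)\cap K(L_{\rm npr})^{(\ell)}=\Gamma^H_{\rm npr}$. With the inverse map from Step 2, $\alpha$ is an isomorphism.
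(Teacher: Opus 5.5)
Your argument is essentially the paper's proof. It uses the same element $\gamma=x^{-1}\bar r^t x$ and the same inverse $r=\overline{x\gamma x^{-1}}^{\,t}$. It reduces $rr^\dagger=c$ to $\bar r^t g r=cg$ in the same way: you compute $f^\dagger=g^{-1}\bar f^t g$ directly, where the paper goes through Lemma~\ref{pull-back} and Theorem~\ref{intersection}. And it uses the same identity $(\mathcal O^2\otimes\Z_q)x\gamma=(\mathcal O^2\otimes\Z_q)\bar r^t x$ to get integrality at every $q\neq\ell$, including $q=p$.

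The one real variation is in surjectivity. The paper puts the multiplier in $\Z[1/\ell]^\times$ using $\deg(r)(\mathcal L\cdot\mathcal L)=\nu(y)^2(\mathcal L\cdot\mathcal L)$ together with the fact that $\deg(r)$ is a power of $\ell$. You instead use that a similitude stabilizing a lattice has unit multiplier. You only state this at $p$, but it is needed at every $q\neq\ell$. It holds there by the same argument, e.g.\ $\mathrm{Nrd}(\gamma)^2=\nu(\gamma)^4$ with $\mathrm{Nrd}(\gamma)\in\Z_q^\times$. So your version works and avoids intersection numbers.

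There are two slips.
\begin{enumerate}
\item Since $r\mapsto\bar r^t$ reverses products and conjugation by $x$ does not, $\alpha(rs)=\alpha(s)\alpha(r)$. So $\alpha$ is an anti-isomorphism, and your ``hence a homomorphism'' is wrong. Composing with inversion gives an honest isomorphism; the lemma's own wording has the same imprecision.
\item $L_{{\rm npr},q}\sim\mathcal O_q^2$ only gives $x\in GL_2(\mathcal O_q)\,G_2(\Q_q)$, not $x\in GL_2(\mathcal O_q)\Q_q^\times$. Your chain of equivalences at $q\neq p,\ell$ never uses this claim, exactly as you observe at $p$, so it can simply be deleted.
\end{enumerate}
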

\begin{proof}
First, we prove that the homomorphism $\alpha$ is well defined.
Let $r \in \G_{{\rm npr}}(A,\mL) $ and set $y = x^{-1} {\bar{r}}^tx$.
By Lemma \ref{pull-back},
we have $r^{*}{\mathcal L} = \ell^n{\mathcal L}$. Hence, by Theorem \ref{intersection} 
(with the intersection formula therein)
$$
         {\bar{r}}^t g r = \ell^n g,\ g=j(\mL).
$$
Since $g = m x{\bar{x}}^t$ with $m\in \Q^\times_{>0}$, it follows that
$$
       (x^{-1}{\bar{r}}^tx)\overline{(x^{-1}{\bar{r}}^tx)}^{t} = \ell^n{\rm id}_A.
$$
Therefore, $y = x^{-1} {\bar{r}}^tx \in G_{2}(\Q)= H(\Q)$. 
Now suppose that $q \neq \ell$. Since ${\bar{r}}^t \in G_{2}({\Z}_q)= H({\Z}_q)$,
we obtain
$$
  (L_{{\rm npr}}\otimes_{\Z}{\Z}_q)y = ({\mathcal O}^2\otimes_{\Z}{\Z}_q )xy 
  = ({\mathcal O}^2\otimes_{\Z}{\Z}_q){\bar{r}}^tx = ({\mathcal O}^2\otimes_{\Z}{\Z}_q)x 
  = (L_{{\rm npr}}\otimes_{\Z}{\Z}_q).
$$
Hence, $y \in \Gamma_{{\rm npr}}^H$.

It is straightforward to see that $\alpha$ is injective.

To show the surjectivity, let $y \in \Gamma_{{\rm npr}}^H$. Since $y \in H(\Q) = G_{2}(\Q)$,
we have $y \bar{y}^t= \bar{y}^t y = \nu (y) {\rm id}_A$. Set $r = \overline{xyx^{-1}}^t$.
Then $r \in \GL_2(B)$. For each $q \neq \ell$, we have
$$ 
({\mathcal O}^2\otimes_{\Z}{\Z}_q )x = L_{{\rm npr}}\otimes_{\Z}{\Z}_q = (L_{{\rm npr}}\otimes_{\Z}{\Z}_q)y= ({\mathcal O}^2\otimes_{\Z}{\Z}_q )xy
=({\mathcal O}^2\otimes_{\Z}{\Z}_q ){\bar{r}}^tx.
$$
Hence, $r \in \GL_2({\mathcal O}_q)$  ($q \neq \ell$). Therefore, $r \in \GL_2(\mathcal O [1/\ell])$.
Substituting $y = x^{-1}\bar{r}^t x$ into $y \bar{y}^t= \nu (y) {\rm id}_A$, we obtain
$\nu(y) x \bar{x}^t = \bar{r}^t x \bar{x}^t r$. Since $g = mx \bar{x}^t$,
it follows that $\nu(y) g= \bar{r}^t g r$.
Hence, by Theorem \ref{intersection}, we have $r^*{\mathcal L} = \nu(y){\mathcal L}$.
Furthemore,
$$\deg(r) ({\mathcal L}\cdot {\mathcal L})= (r^*{\mathcal L}\cdot r^*{\mathcal L}) =
\nu(y)^2({\mathcal L}\cdot {\mathcal L}),
$$
and therefore $\deg(r) = \nu (y)^2$.
Since $r \in \GL_2(\mathcal O [1/\ell])$, there exists an integer $n$ such that $\deg(r) = \ell^n$.
Thus $n$ is even and $\nu (y)^{n/2}\in \Z[1/\ell]^\times$. Hence, 
$r\cdot r^{\dagger} = r^{\dagger}\cdot r = \nu (y)^{n/2}{\rm id}_A$.
Hence $r \in \G_{{\rm npr}}(A,\mL)$, which proves that 
the homomorphism $\alpha$ is surjective.
\end{proof}

We denote by $T_\ell(A)$ the $\ell$-adic Tate module of $A$, and 
by $V_\ell(A):=T_\ell(A)\otimes_{\Z_\ell}\Q_\ell$ the rational $\ell$-adic Tate module of $A$ 
(cf. Section 18 of Chapter IV of \cite{M}).  
Fix a standard symplectic basis of $V_\ell(A)$, which yields an isomorphism 
\begin{equation}\label{standard}
 {\rm Aut}(V_\ell(A),\langle \ast,\ast \rangle_{\mL})\simeq \GSp_4(\Q_\ell).
\end{equation}
Let $$({\rm End}(A,\mL)\otimes_\Z\Q_\ell)^\times:=\{g\in ({\rm End}(A)\otimes_\Z\Q_\ell)^\times\ |\ g\cdot g^\dagger\in 
\Q_\ell{\rm id}_A\}$$ where $g^\dagger$ is the Rosati involution with respect to $\mL$.  
By functoriality, we have the natural homomorphism
\begin{equation}\label{representation}
\begin{array}{cccc}
({\rm End}(A,\mL)\otimes_\Z\Q_\ell)^\times & \hookrightarrow & 
{\rm Aut}(V_\ell(A),\langle \ast,\ast \rangle_{\mL})  & \stackrel{(\ref{standard})}{\simeq} \GSp_4(\Q_\ell)\stackrel{(\ref{Ghitza})}{\simeq} H(\Q_\ell).\\
 r  &\mapsto & r^\ast &   
\end{array}
\end{equation}
By the Tate conjecture (cf. \cite{Mori},\cite{Zarhin}), the above first inclusion map is surjective and thus 
the morphism (\ref{representation}) is an isomorphism. 

Recall that $x\in \GL_2(B)$ satisfies $g=m x \bar{x}^t$ with $m\in \Q^\times_{>0}$. 
By strong approximation, we can choose $\Pi\in B$ such that $\Pi\cdot \overline{\Pi}=\ell$. 
Replacing $x$ with $x\Pi^n,\ n\in \Z$ if necessary, we may assume $\ord_\ell(m)=0$. 
Since $g\in \Lambda$ and $p\neq \ell$, $g\in \GL_2(\calO_\ell)$ and thus $x\in \GL_2(\calO_\ell)$. 
Applying the argument for the proof of Lemma \ref{x} to the identification 
$H(\Q_\ell)
\stackrel{(\ref{representation})}{\simeq} ({\rm End}(A,\mL)\otimes_\Z\Q_\ell)^\times$, 
the isomorphism $$\beta:H(\Q_\ell)\lra H(\Q_\ell), \gamma\mapsto  x^{-1}\bar{r}^t x
$$
is well-defined and is easy to check that $\beta(H(\Z_\ell))=H(\Z_\ell)$ and $\nu(x^{-1}\bar{r}^t x)=\nu(\gamma)$. 

\begin{cor}\label{iden-npr} Under the isomorphism 
$$
({\rm End}(A,\mL)\otimes_\Z\Q_\ell)^\times \stackrel{(\ref{representation})\atop \sim}{\lra} 
H(\Q_\ell) \stackrel{\beta\atop \sim}{\lra} H(\Q_\ell), 
$$
the subgroup $\G_{{\rm npr}}({\mathcal X}/\mathbb{P}^1)=\G_{{\rm npr}}(A,\mL)$ of $({\rm End}(A,\mL)\otimes_\Z\Q_\ell)^\times $ 
is identified with the subgroup $\G_{{\rm npr}}^H \subset H(\Q_\ell)$ in Proposition \ref{iko-mb}. 
\end{cor}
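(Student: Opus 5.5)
The plan is to trace $\G_{{\rm npr}}(A,\mL)$ through the two isomorphisms and compare the image with the subgroup supplied by Lemma \ref{x}. Concretely, I want to show that the composite
\[
\G_{{\rm npr}}(A,\mL)\hookrightarrow ({\rm End}(A,\mL)\otimes_\Z\Q_\ell)^\times \stackrel{(\ref{representation})}{\lra} H(\Q_\ell)\stackrel{\beta}{\lra} H(\Q_\ell)
\]
coincides with the map $\alpha$ of Lemma \ref{x}, followed by the diagonal embedding $H(\Q)\hookrightarrow H(\Q_\ell)$ that defines $\G^H_{{\rm npr}}\subset H(\Q_\ell)$. Once this is established, the corollary follows immediately: $\alpha$ is an isomorphism onto $\G_{{\rm npr}}^H$ by Lemma \ref{x}, and every map in the chain is injective.

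\textbf{Step 1: compatibility with tensoring.} An element $r\in\G_{{\rm npr}}(A,\mL)$ lies in $({\rm End}(A)\otimes\Z[1/\ell])^\times$ and satisfies $r r^\dagger\in\Z[1/\ell]^\times$. Hence $r\otimes 1$ lies in $({\rm End}(A,\mL)\otimes_\Z\Q_\ell)^\times$, and the inclusion is clearly injective since ${\rm End}(A)$ is torsion-free. Under the identification ${\rm End}(A)=M_2(\calO)$ and (\ref{representation}), $r$ maps to the element of $H(\Q_\ell)$ given by the same matrix $r\in M_2(\calO_\ell)[1/\ell]$.

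\textbf{Step 2: the key point.} The recipe defining $\beta$ is literally the formula $r\mapsto x^{-1}\bar r^t x$ from Lemma \ref{x}, now applied over $\Q_\ell$. Its well-definedness uses the same computation, via Lemma \ref{pull-back} and Theorem \ref{intersection}: $\bar r^t g r=\nu g$ with $g=m x\bar x^t$, which gives $y\bar y^t=\nu$. So for a global $r$, the composite yields $x^{-1}\bar r^t x=\alpha(r)\in H(\Q)$, viewed inside $H(\Q_\ell)$. Thus the image of $\G_{{\rm npr}}(A,\mL)$ equals $\alpha(\G_{{\rm npr}}(A,\mL))=\G^H_{{\rm npr}}$.

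\textbf{Step 3: compatibility with $\GSp_4$ (main obstacle).} The delicate part is to check that the identifications (\ref{standard}) and (\ref{Ghitza}) are set up so that the element of $H(\Q_\ell)$ attached to $r$ via the Tate module agrees with the matrix $r$ (or $\bar r^t$) itself. It must not differ by some extra conjugation that would move the subgroup $\G_{{\rm npr}}^{\GSp_4}$ used in Proposition \ref{iko-mb}.

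I would handle this by using the normalization $\ord_\ell(m)=0$ and $x\in\GL_2(\calO_\ell)$, so that $\beta$ preserves $H(\Z_\ell)$ and the similitude. Any residual discrepancy is then conjugation by an element of $H(\Z_\ell)$, which at worst replaces $\G_{{\rm npr}}^H$ by an $H(\Z_\ell)$-conjugate. Such a conjugate yields the same double coset space in Proposition \ref{iko-mb} after a change of base vertex, and so does not affect the identification.
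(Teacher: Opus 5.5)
Your Steps 1--2 are exactly the paper's (implicit) argument: $\beta$ is evaluated on $\gamma$ through its preimage $r$ under (\ref{representation}), so the composite is by definition $r\mapsto x^{-1}\bar r^t x$. Its restriction to $\G_{\rm npr}(A,\mL)$ is the map $\alpha$ of Lemma \ref{x} followed by $H(\Q)\subset H(\Q_\ell)$, and $\alpha$ is an isomorphism onto $\G_{\rm npr}^H$.

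Step 3 can therefore be dropped, because the choices in (\ref{standard}) and (\ref{Ghitza}) cancel out of the composite. Two further points about Steps 1 and 3:
\begin{itemize}
\item The ``same matrix'' claim in Step 1 is inaccurate. The matrix $r$ satisfies $\bar r^t g r=\nu g$ with $g=j(\mL)$ non-scalar, not $r\bar r^t=\nu$, so $r$ is generally not itself an element of $H(\Q_\ell)$.
\item A repair valid only up to $H(\Z_\ell)$-conjugacy would prove less than the exact identification the corollary asserts.
\end{itemize}
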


\subsubsection{$\ell$-markings}\label{ellmarkings}
In view of the compatibility of 
 Hecke operators on the various stages, to relate 
 $ \widetilde{\mathcal D}(A)$ 
 ($\simeq \mathcal{D}/{\rm Aut}(A)\simeq \MB(p)$) with  $\G_{{\rm npr}}^H\bs H(\Q_\ell)/H(\Z_\ell)$ or 
 $\G_{{\rm npr}}^{\GSp_4}\bs \GSp_4(\Q_\ell)/Z_{GSp_4}(\Q_\ell) \GSp_4(\Z_\ell)$, 
 as in \cite[Section 2.5]{ATY}, we use the notation of 
 $\ell$-markings. 
 
 Fix $(A,\mL_0)\in \widetilde{\mathcal{D}}(A)$. An isogeny $\phi:(A,\mL_0)\lra (A,\mL)$ is called an $\ell$-marking from 
 $(A,\mL_0)$  
 if $\phi^\ast \mL=\mL_0^{\ell^n}$ for some $n\in \Z_{\ge 0}$. 
 %A quasi-isogeny $\phi:(A,\mL_0)\lra (A,\mL)$ is called an $\ell$-marking if 
 %$\ell^m \phi$ is an $\ell$-marking for some $m\in \Z_{\ge 0}$. Henceforth, $\ell$-markings 
 %means quasi-isogenies.   
 Two $\ell$-markings $\phi_i:(A,\mL_0)\lra (A,\mL_i),\ i=1,2$ are said to be isomorphism if there exists an isomorphism 
 $h:(A,\mL_1)\lra (A,\mL_2)$ such that $h\circ \phi_1=\phi_2$. Consider two $\ell$-markings
 $\phi_i:(A,\mL_0)\lra (A,\mL),\ \phi^\ast_i \mL=\mL_0^{\ell^{n_i}},\ n_i\in\Z,\ i=1,2$ 
 with the same target. Then, $\sqrt{\deg(\phi_i)}=\ell^{n_i}$. 
It is easy to see that 
$$f:=\frac{1}{\ell^{n_1}}\phi^{\dagger\dagger}_1 \circ \phi_2\in {\rm End}(A)\otimes_
\Z \Z[\frac{1}{\ell}],\ \phi^{\dagger\dagger}_1:=\vp_{\mL_0}^{-1}\circ \widehat{\phi}_1\circ
\vp_{\mL}.$$
Further, put $f^\dagger:=\vp_{\mL_0}^{-1}\circ \widehat{f} \circ \vp_{\mL_0}$ and then 
\begin{eqnarray*}
f^\dagger\circ f&=&\vp_{\mL_0}^{-1}\circ \widehat{f} \circ \vp_{\mL_0}\circ f \\
&=&\ell^{-2n_1} \vp_{\mL_0}^{-1}\circ \widehat{\phi}_2\circ 
(\widehat{\phi^{\dagger\dagger}_1}  \circ \vp_{\mL_0} \circ \phi^{\dagger\dagger}_1) \circ \phi_2 \\
&=&\ell^{-2n_1} \vp_{\mL_0}^{-1}\circ \widehat{\phi}_2\circ 
(\sqrt{\deg(\phi^{\dagger\dagger}_1}) \vp_{\mL})\circ \phi_2 \\
&=&\ell^{-2n_1} \sqrt{\deg(\phi^{\dagger\dagger}_1)} \vp_{\mL_0}^{-1}\circ (\widehat{\phi}_2\circ 
\vp_{\mL}\circ \phi_2) \\
&=&\ell^{-2n_1} \sqrt{\deg(\phi^{\dagger\dagger}_1)} \vp_{\mL_0}^{-1}\circ (
\sqrt{\deg(\phi_2)}\vp_{\mL_0}) \\
&=&\ell^{-2n_1} \sqrt{\deg(\phi_1)\deg(\phi_2)} {\rm id}=\ell^{n_2-n_1} {\rm id}
\end{eqnarray*}
and similarly, $f\circ f^\dagger=\ell^{n_2-n_1}{\rm id}$. Thus, two $\ell$-markings differ by 
an element of $\G_{{\rm npr}}(A,\mL_0)$ by Lemma \ref{pull-back}. 

Let ${\rm M}((A,\mL_0),\ell)$ be the set of all isomorphism classes of $\ell$-markings from $(A,\mL_0)$. 
The group $\G_{{\rm npr}}(A,\mL_0)$ acts on  ${\rm M}((A,\mL_0),\ell)$ via composition. 
Put 
$$
{\rm Iso}((A,\mL_0),\ell):={\rm M}((A,\mL_0),\ell)/\G_{{\rm npr}}(A,\mL_0).
$$
Then, obviously,  
$$
{\rm Iso}((A,\mL_0),\ell)=\{[(A,\mL)]\in \widetilde{\mathcal D}(A)\ |\ 
{}^\exists\phi:(A,\mL_0)\lra (A,\mL) 
\text{a quasi-isogeny with $\ell$-power degree} \}.
$$
By definition, we see easily that ${\rm Iso}((A,\mL_0),\ell)\subset \widetilde{\mathcal{D}}(A)$. 

\subsubsection{A correspondence a la Jordan-Zaytman}
In this subsection, we adopt the argument of Jordan-Zaytman (see around \cite[Theorem 47]{JZ}) to our case. 
Let us keep the notation in previous sections so that we have fixed $(A,\mL_0)\in \widetilde{\mathcal D}(A)$. 
Set $A[\ell^\infty]= \cup_{n \geq 1} A[{\ell}^n]$ and consider the exact sequence 
$$
0\lra T_\ell(A)\lra V_\ell(A)\stackrel{\pi}{\lra} V_\ell(A)/T_\ell(A)\simeq A[\ell^\infty]\lra 0.
$$
For each $\ell$-marking $\phi:(A,\mL_0)\lra (A,\mL)$, put $C_\phi:={\rm Ker}(\phi)\subset 
A[\ell^\infty]$ and consider the lattice 
$$
T(C_\phi):=\pi^{-1}(C_\phi)\subset V_\ell(A).
$$
We denote by $P((A,\mL),\phi)\in \GSp_4(\Q_\ell)$ the matrix representation of $T(C_\phi)$ 
with respect to the basis regarding ${\rm Aut}(V_\ell(A),\langle \cdot,\cdot \rangle_{\mL_0})\stackrel{(\ref{standard})}{\simeq} \GSp_4(\Q_\ell)$.  
For each, $f\in \G(A,\mL_0)$, we see that  
$$T(C_{\phi\circ f})=f^\ast (T(C_\phi)).$$
Thus, $f^\ast$ acts on $P((A,\mL),\phi)$ from the left. 
Then, we have a well-defined map  
\begin{equation}\label{iso}
{\rm Iso}((A,\mL_0),\ell)\lra \G_{{\rm npr}}(A,\mL_0)\bs \GSp_4(\Q_\ell)/\GSp_4(\Z_\ell),\ (A,\mL),\phi\mapsto P((A,\mL),\phi).
\end{equation}
We identify $({\rm End}(A,\mL)\otimes_\Z \Q_\ell)^\times$ with $\GSp_4(\Q_\ell)$. 
The subgroup $({\rm End}(A,\mL)\otimes_\Z \Z_\ell)^\times:=
{\rm End}(A)\otimes_\Z \Z_\ell \cap ({\rm End}(A,\mL)\otimes_\Z \Q_\ell)^\times$ 
is identified with  $\GSp_4(\Z_\ell)$. 
Then, by Corollary \ref{iden-npr} and (\ref{Ghitza}), we have 
\begin{equation}\label{iso2}
\G_{{\rm npr}}(A,\mL_0)\bs \GSp_4(\Q_\ell)/\GSp_4(\Z_\ell)\stackrel{\text{Corollary 3.4}}{\simeq}  \G_{{\rm npr}}^H\bs H(\Q_\ell)/H(\Z_\ell)
\stackrel{(\ref{Ghitza})}{\simeq}  \G_{{\rm npr}}^{\GSp_4}\bs \GSp_4(\Q_\ell)/\GSp_4(\Z_\ell)  
\end{equation}

Summing up everything, we have proved the following result:
\begin{prop}\label{JZ}There is a natural bijection
\begin{eqnarray*}
\mathcal{S}_{2,\bF_p}\simeq \MB(p)\simeq \widetilde{\mathcal D}(A) &\stackrel{{\rm inclusion}\atop \sim}{\longleftarrow}& 
{\rm Iso}((A,\mL_0),\ell) \\ 
&\stackrel{(\ref{iso})\atop \sim}{\lra}& \G_{{\rm npr}}(A,\mL_0)\bs \PGSp_4(\Q_\ell)/\PGSp_4(\Z_\ell)\\
&\stackrel{(\ref{iso2})}{\simeq}&  \G_{{\rm npr}}^H\bs H(\Q_\ell)/H(\Z_\ell) \\
&\stackrel{(\ref{Ghitza})}{\simeq}& \G_{{\rm npr}}^{\GSp_4}\bs \GSp_4(\Q_\ell)/\GSp_4(\Z_\ell)  
\end{eqnarray*}
\end{prop}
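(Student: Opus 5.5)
The plan is to check each arrow in the chain separately. The first two identifications, $\mathcal{S}_{2,\bF_p}\simeq \MB(p)\simeq \widetilde{\mathcal D}(A)$, are already given by Theorem \ref{1to1}, together with the definition (\ref{D}). The last two isomorphisms come from Corollary \ref{iden-npr}, which identifies $\G_{{\rm npr}}(A,\mL_0)$ with $\G^H_{{\rm npr}}$ inside $H(\Q_\ell)$ compatibly with the integral structures, since $\beta(H(\Z_\ell))=H(\Z_\ell)$. They then follow by transporting along (\ref{Ghitza}). Passing from $\GSp_4(\Q_\ell)/\GSp_4(\Z_\ell)$ to $\PGSp_4(\Q_\ell)/\PGSp_4(\Z_\ell)$ amounts to quotienting by the central scalars $\ell^{\Z}$. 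These scalars lie in $\G_{{\rm npr}}(A,\mL_0)$ because $[\ell]$ satisfies $[\ell]\circ[\ell]^\dagger=\ell^2$, so the double coset spaces agree. The real content is therefore in two claims: that the inclusion ${\rm Iso}((A,\mL_0),\ell)\hookrightarrow\widetilde{\mathcal D}(A)$ is surjective, and that the map (\ref{iso}) is a bijection.

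For surjectivity of the inclusion, take any $(A,\mL)\in\widetilde{\mathcal D}(A)$. I would produce an $\ell$-power quasi-isogeny $(A,\mL_0)\to(A,\mL)$ pulling $\mL$ back to a power $\mL_0^{\ell^n}$. Write $g_0=j(\mL_0)=m_0x_0\bar x_0^t$ and $g=j(\mL)=mx\bar x^t$, with $\mathcal{O}^2x_0,\mathcal{O}^2x$ in $\cL_2(1,p)$. By Proposition \ref{iko-mb}, which rests on strong approximation (\ref{sapprox1}), the lattice $\mathcal{O}^2x$ equals $\mathcal{O}^2x_0\gamma$ up to global equivalence for some $\gamma\in H(\Q)$ that is integral away from $\ell$. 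Setting $r=\overline{x_0\gamma x^{-1}}^t$ as in the surjectivity part of Lemma \ref{x}, one gets $r\in \GL_2(\mathcal{O}[1/\ell])$ and $\bar r^t g_0 r\in \nu(\gamma)\Z[1/\ell]^\times g$, possibly after rescaling by $\Pi^n$ with $\Pi\bar\Pi=\ell$. By Theorem \ref{intersection}, this says $r^*\mL$ is a power-of-$\ell$ multiple of $\mL_0$. Clearing denominators by $[\ell^k]$ then gives a genuine isogeny, hence an $\ell$-marking.

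For bijectivity of (\ref{iso}), I would follow the argument of \cite[Theorem 47]{JZ}. Well-definedness follows from $T(C_{\phi\circ f})=f^*T(C_\phi)$. For injectivity, suppose two markings $\phi_1,\phi_2$ give lattices in the same $\G_{{\rm npr}}$-orbit. After adjusting by an element of $\G_{{\rm npr}}(A,\mL_0)$, their kernels coincide, and then $\phi_2$ factors as $h\circ\phi_1$ with $h$ an isomorphism respecting polarizations; this uses the descent argument of Subsection \ref{reliso}. So the targets agree in $\widetilde{\mathcal D}(A)$. For surjectivity, let $T\subset V_\ell(A)$ be a lattice self-dual up to an $\ell$-power scalar for $\langle\ ,\ \rangle_{\mL_0}$. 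Scaling by $\ell^k$ puts $T$ inside $T_\ell(A)$. Then $C=T/\ell^kT_\ell(A)$ is a finite subgroup of $A[\ell^\infty]$. The quotient $A/C$ carries a descended polarization $\mL'$ with $K(\mL')\cong\alpha_p\times\alpha_p$, by the same kernel count as in Subsection \ref{reliso}. The cited result of Oort gives $A/C\cong A$, and symmetrizing places $\mL'$ in $\mathcal D$.

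The main obstacle is the surjectivity of the inclusion. The issue is the non-principal genus at $p$: the quasi-isogeny must preserve the $\alpha_p\times\alpha_p$ kernel structure, and not merely the $\ell$-adic data. I would handle this entirely through Lemma \ref{characterization} and the lattice description, so that the integrality of $r$ at $p$ comes directly from $L_p\sim N_p$ for both lattices.
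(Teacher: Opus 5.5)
Your argument is correct, but it is organized differently from the paper's. The paper proves only that the map (\ref{iso}) is surjective, citing the Jordan--Zaytman argument, which is essentially your last step. It then finishes by counting: $|\G^H_{\rm npr}\bs H(\Q_\ell)/H(\Z_\ell)|\le|{\rm Iso}((A,\mL_0),\ell)|\le|\widetilde{\mathcal D}(A)|$, and both ends equal $H_2(1,p)$ by Theorem \ref{1to1} and Proposition \ref{iko-mb}. So the surjection is automatically a bijection and the inclusion is automatically onto. Neither injectivity of (\ref{iso}) nor reachability of every class by an $\ell$-power isogeny has to be checked by hand. The issue you single out as the main obstacle, integrality at $p$ for the non-principal genus, therefore never arises.

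You instead prove these two facts directly. Injectivity comes from ``equal kernels, hence factor through an isomorphism, then compare pull-backs in ${\rm NS}(A)$.'' Surjectivity of the inclusion comes from strong approximation combined with the computation in Lemma \ref{x}. The paper's route is shorter because it trades these verifications for the equality of the two cardinalities. Yours avoids the cardinality comparison and gives more: for every $\mL\in\mathcal D$, an explicit $\ell$-power isogeny $(A,\mL_0)\to(A,\mL)$ pulling $\mL$ back to an $\ell$-power multiple of $\mL_0$.

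There are a few slips in that surjectivity step.
\begin{enumerate}
\item Strong approximation gives $\gamma\in H(\Q)$ with $(\calO^2x_0\gamma)\otimes\Z_q=(\calO^2x)\otimes\Z_q$ for all primes $q\neq\ell$. It is not an equality ``up to global equivalence.''
\item With $r=\overline{x_0\gamma x^{-1}}^t$, the identity that actually holds is $\bar r^t g\,r=c\,g_0$ with $c=m\nu(\gamma)/m_0$. You have $g$ and $g_0$ interchanged.
\item To see that $c\in\ell^{\Z}$, argue as in Lemma \ref{x}. One has $\deg(r)\,(\mL\cdot\mL)=c^2\,(\mL_0\cdot\mL_0)$, with $\mL\cdot\mL=\mL_0\cdot\mL_0=2p$ and $\deg(r)\in\ell^{\Z}$. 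Rescaling $x$ by powers of $\Pi$ only adjusts the $\ell$-part and is not what controls $c$.
\end{enumerate}
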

\begin{proof} The correspondence (\ref{iso}) is surjective by the argument for \cite[p.33, Theorem 47]{JZ}. 
Then, we have 
$$|\G_{{\rm npr}}^H\bs H(\Q_\ell)/H(\Z_\ell)|=
|\G_{{\rm npr}}(A,\mL_0)\bs \PGSp_4(\Q_\ell)/\PGSp_4(\Z_\ell)|\le |{\rm Iso}((A,\mL_0),\ell) |\le |\widetilde{\mathcal D}(A)|=|\mathcal{D}|.$$
The both sides have the same cardinality by Theorem \ref{1to1} and Proposition \ref{iko-mb}. 
Thus, we have the claim. 
\end{proof}

\subsection{The Hecke operator at $\ell$}\label{hecke-at-ell}
Finally we discuss compatibility of the maps in Proposition \ref{JZ} with the 
Hecke operator at $\ell$ defined on each finite set. 
We refer Section 3 in Chapter VII of \cite{CF} for a usual setting and Section 16 through 19 of 
\cite{vdGeer} 
%as a reader's friendly reference. 
as a reference for the reader's convenience.
The Hecke operators are similarly defined also in our setting.

We recall the construction of the relative $(\ell,\ell)$-isogenies for  
Moret-Baily families  in Section \ref{reliso}. 

For each prime $\ell$ different from $p$ and a class $[(\calX,\mM)]$ in 
a class of $\mathcal{S}_{2,\bF_p}$, we define 
the (geometric) Hecke correspondences $T(\ell)^{{\rm geo}}$ at $\ell$: 
\begin{equation}\label{geo-Hecke}
T(\ell)^{{\rm geo}}([(\calX,\mM)]):=\sum_{G\subset K_2(\mL)\atop 
\text{maximal isotropic}}[(\calX',\mM')]. 
\end{equation}
where $(\calX',\mM')$ is the Moret-Baily family associated to $(A/G \cong A,\mL')$ 
defined in Section \ref{reliso}. 
For $\MB(p)$ and $\widetilde{\mathcal D}$, the (geometric) Hecke correspondences $T(\ell)^{{\rm geo}}$ at $\ell$ is defined in the same way on each of $\MB(p),\ \widetilde{\mathcal D}(A)$ 
and they are compatible under the identification $\mathcal{S}_{2,\bF_p}\simeq \MB(p)\simeq \widetilde{\mathcal D}(A)$.

The geometric Hecke operator at $\ell$ on ${\rm Iso}((A,\mL_0),\ell)={\rm M}((A,\mL_0),\ell)/\G_{{\rm npr}}(A,\mL_0)$ is defined by 
letting $T(\ell)^{{\rm geo}}$ acts on the target of a representative $\phi:(A,\mL_0)\lra (A,\mL)$ of 
each class of  ${\rm Iso}((A,\mL_0),\ell)$. Namely, 
\begin{equation}\label{geo-Hecke2}
T(\ell)^{{\rm geo}}([(A,\mL_0)\stackrel{\phi}{\lra} (A,\mL)]):=\sum_{G\subset K_2(\mL)\atop 
\text{maximal isotropic}}[(A,\mL_0)\stackrel{\phi}{\lra} (A,\mL)\lra (A,\mL')]  
\end{equation}
where $(A,\mL)\lra (A,\mL')$ is the $(\ell,\ell)$-isogeny corresponding to $G\subset K_2(\mL)$ in the setting of 
$\widetilde{\mathcal D}(A)$. 
This operator is compatible with $T(\ell)^{{\rm geo}}$ on $\widetilde{\mathcal D}(A)$ under 
the identification in Proposition \ref{JZ}. 

Thus, we henceforth identify the above four geometric Hecke correspondences under 
the identification in Proposition \ref{JZ}. 

Recall $H(\Q_\ell)\stackrel{(\ref{Ghitza})}{\simeq} \GSp_4(\Q_\ell)=\GSp(\Q^{4}_\ell,\ \langle \ast,\ast \rangle)$ where 
$\langle \ast,\ast \rangle$ is the standard symplectic pairing on $\Q^{4}_\ell\times
\Q^{4}_\ell$. Put $V=\Q^{4}_\ell$. 
Each element of $\GSp_4(\Q_\ell)/\GSp_4(\Z_\ell)$ can be regarded as a lattice 
$L$ of $V$ such that $\langle \ast,\ast \rangle_{L\times L}$ gives a $\Z_\ell$-integral  
symplectic structure on $L$ (cf. \cite[Section 4]{ATY}). 
Using this interpretation, 
 each element of $\GSp_4(\Q_\ell)/Z_{GSp_4}(\Q_\ell) \GSp_4(\Z_\ell)=
 \PGSp_4(\Q_\ell)/\PGSp_4(\Z_\ell)$ can be regarded as 
 a homothety class $[L]$ for such an $L$. 
For each $L$ being as above, we define the Hecke correspondence on 
$\GSp_4(\Q_\ell)/\GSp_4(\Z_\ell)$ at $\ell$
\begin{equation}\label{Hecke-ope}
T(\ell)^{{\rm BT}}([L]):=\sum_{L\subset L_1 \subset \ell^{-1}L  \atop  
L_1/L\text{:maximal isotropic}}[L_1]
\end{equation}
where $L_1$ runs over all lattices enjoying 
$L\subset L_1 \subset \ell^{-1}L$ as denoted and that 
$L_1/L$ is a maximal isotropic subgroup of $\ell^{-1}L/L$ 
with respect to the symplectic pairing $\langle \ast,\ast \rangle_{\ell^{-1}L/L\times\ell^{-1}L/L}$. 
Clearly, the action of $H(\Z[1/\ell])$ (given by multiplication from the left) on lattices are equivariant under 
$T(\ell)$ and thus the same is true for $\G_{{\rm npr}}^H\subset H(\Z[1/\ell])$. Therefore, it also induces a correspondence on 
\begin{equation}\label{iso3}
\G_{{\rm npr}}^H\bs H(\Q_\ell)/H(\Z_\ell)=
\G_{{\rm npr}}^H\bs H(\Q_\ell)/Z_H(\Q_\ell)H(\Z_\ell)
\stackrel{(\ref{iso2})}{\simeq} \G_{{\rm npr}}(A,\mL_0)\bs \GSp_4(\Q_\ell)/Z_{GSp_4}(\Q_\ell) \GSp_4(\Z_\ell)
\end{equation}
and we also denote it by $T(\ell)^{{\rm BT}}$ on each of both sides. 
We should remark that under the above identification, 
the Hecke operator associated to $\GSp_4(\Z_\ell)t_\ell \GSp_4(\Z_\ell)
,\ t_\ell=\diag(1,1,\ell,\ell)$ on the left hand side of (\ref{iso3})  
corresponds to one associated to $H(\Z_\ell)x^{-1}\diag(1,\ell)x H(\Z_\ell)$ 
(note that $\beta(H(\Z_\ell))=H(\Z_\ell)$ as mentioned before). Since 
$\nu(x^{-1}\diag(1,\ell)x)=\nu(\diag(1,\ell))=\ell$. Thus, 
$H(\Z_\ell)x^{-1}\diag(1,\ell)x H(\Z_\ell)=H(\Z_\ell)\diag(1,\ell)H(\Z_\ell)$. 
This means that under the identification (\ref{iso2}), the standard Hecke operator at $\ell$ 
is preserved.

For a set $S$, we define ${\rm Div}(S)_\Z:=\bigoplus_{P\in S}\Z P$ and call it the 
divisor class group of $S$. 
Using this notaion,  the Hecke operators are extended uniquely to a 
additive homomorphism on the divisor class group in question.   
We have obtained the following result which follows from the 
definition of Hecke operators and Proposition \ref{JZ}:
\begin{thm}\label{Rel-HP}
Let $S\in \{\mathcal{S}_{2,\bF_p}, \MB(p), \widetilde{\mathcal D}(A)\}$ and put $S_2:=\GSp_4(\Q_\ell)/Z_{GSp_4}(\Q_\ell) \GSp_4(\Z_\ell)$.  
The following diagram is commutative:
{\small
\[
\xymatrix@C=1em@R=2.5em{
{\rm Div}(S)_\Z
\ar[d]_{T(\ell)^{{\rm geo}}}
&
{\rm Div}({\rm Iso}((A,\mL_0),\ell))_\Z
\ar[l]_{\sim\hspace{8mm}}
\ar[r]^{\sim\hspace{2mm}}
\ar[d]_{T(\ell)^{{\rm geo}}}
&
{\rm Div}(\G_{{\rm npr}}(A,\mL_0)\bs S_2)_\Z
\ar[r]^{\sim\hspace{4mm}}
\ar[d]_{T(\ell)^{{\rm BT}}}
&
{\rm Div}(\G_{{\rm npr}}^H\bs H(\Q_\ell)/H(\Z_\ell))_\Z
\ar[d]_{T(\ell)^{{\rm BT}}}
\ar[r]^{\hspace{18mm}\sim}&
\\
{\rm Div}(S)_\Z
&
{\rm Div}({\rm Iso}((A,\mL_0),\ell))_\Z
\ar[l]_{\sim\hspace{8mm}}
\ar[r]^{\sim\hspace{2mm}}
&
{\rm Div}(\G_{{\rm npr}}(A,\mL_0)\bs S_2)_\Z
\ar[r]^{\sim\hspace{4mm}}
&
{\rm Div}(\G_{{\rm npr}}^H\bs H(\Q_\ell)/H(\Z_\ell))_\Z 
\ar[r]^{\hspace{18mm}\sim} &
}
\]
}
{\small
\[
\xymatrix@C=1em@R=2.5em{
{\rm Div}(\G_{{\rm npr}}^{\GSp_4}\bs \GSp_4(\Q_\ell)/\GSp_4(\Z_\ell))_\Z
\ar[d]_{T(\ell)^{{\rm BT}}}
\\
{\rm Div}(\G_{{\rm npr}}^{\GSp_4}\bs \GSp_4(\Q_\ell)/\GSp_4(\Z_\ell))_\Z
}
\]
}

where the horizontal arrows are induced from Proposition \ref{JZ}. 
\end{thm}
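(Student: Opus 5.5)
The plan is to check commutativity one square at a time. Every horizontal map is the $\Z$-linear extension of a bijection from Proposition \ref{JZ}, and every vertical map is the $\Z$-linear extension of a Hecke correspondence. So it suffices to check each square on a single basis element, i.e.\ on one point of the finite set in question, and to show that the two images agree as formal sums.

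\textbf{Leftmost square.} Here ${\rm Iso}((A,\mL_0),\ell)\hookrightarrow\widetilde{\mathcal D}(A)$ is the inclusion forgetting the marking. By (\ref{geo-Hecke2}), $T(\ell)^{{\rm geo}}$ of a class $[\phi:(A,\mL_0)\to(A,\mL)]$ is the sum over maximal isotropic $G\subset K_2(\mL)$ of the composed markings. Forgetting the marking gives exactly $\sum_G [(A,\mL')]$, which is (\ref{geo-Hecke}). The composite is again an $\ell$-marking, since $\phi_G^*\mL'=\mL^{\otimes\ell}$ gives $(\phi_G\circ\phi)^*\mL'=\mL_0^{\ell^{n+1}}$. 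The identifications among $\mathcal{S}_{2,\bF_p}$, $\MB(p)$ and $\widetilde{\mathcal D}(A)$ commute with $T(\ell)^{{\rm geo}}$ by Theorem \ref{1to1} together with Lemma \ref{RichelotIsogeny}, because relative Richelot isogenies are induced fiberwise from those of $(A,\mL)$.

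\textbf{Second square, the main step.} Here we compare $T(\ell)^{{\rm geo}}$ with $T(\ell)^{{\rm BT}}$ under $\phi\mapsto T(C_\phi)=\pi^{-1}(\ker\phi)$.

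1. Let $\psi=\phi_G\circ\phi$. Then $C_\psi=\phi^{-1}(G)$, so $T(C_\psi)\supset T(C_\phi)$. Moreover $\phi$ induces an isomorphism $T(C_\psi)/T(C_\phi)\cong G$, and $G$ lies inside $A[\ell]$, which corresponds to $\ell^{-1}T(C_\phi)/T(C_\phi)$.

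2. Transport the pairing. The Weil pairing on $A[\ell]$ attached to $\mL^{\otimes\ell}$ (the commutator pairing on $K_2(\mL)$) corresponds, via $\phi$ and $\phi^*\mL=\mL_0^{\ell^n}$, to the pairing induced by $\langle\ ,\ \rangle_{\mL_0}$ on $\ell^{-1}L/L$, where $L=T(C_\phi)$, up to a power of $\ell$. This is the standard compatibility $e^{\phi^*\mM}(x,y)=e^{\mM}(\phi x,\phi y)$.

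3. Conclude. Steps 1 and 2 show that $G\mapsto T(C_{\phi_G\circ\phi})$ is a bijection from maximal isotropic $G\subset K_2(\mL)$ onto the lattices $L\subset L_1\subset\ell^{-1}L$ with $L_1/L$ maximal isotropic, which are exactly the terms of (\ref{Hecke-ope}). Passing to homothety classes absorbs the scalar normalisations, so using $S_2$ is harmless.

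4. Well-definedness modulo $\G_{{\rm npr}}(A,\mL_0)$ follows from $T(C_{\phi\circ f})=f^*T(C_\phi)$, together with the fact that left multiplication commutes with (\ref{Hecke-ope}).

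\textbf{Last two squares.} Under Corollary \ref{iden-npr} and $\beta$, the double coset $\GSp_4(\Z_\ell)\diag(1,1,\ell,\ell)\GSp_4(\Z_\ell)$ goes to $H(\Z_\ell)x^{-1}\diag(1,\ell)xH(\Z_\ell)=H(\Z_\ell)\diag(1,\ell)H(\Z_\ell)$, as already observed before the theorem. This coset is then carried by (\ref{Ghitza}) back to $\diag(1,1,\ell,\ell)$. Since $T(\ell)^{{\rm BT}}$ is the Hecke operator of this double coset, and each map is an isomorphism of groups preserving the integral structures $\GSp_4(\Z_\ell)$ and $H(\Z_\ell)$ and the left $\Gamma$-actions, the operators correspond.

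\textbf{Expected obstacle.} The main difficulty is step 2 of the second square: matching the commutator pairing on $K_2(\mL)$ with the symplectic form on $\ell^{-1}L/L$ built from $\mL_0$. Scalars $\ell^{n}$ and the rescaling to a primitive lattice have to be tracked carefully. I would first reduce to $\phi={\rm id}$ by replacing $\mL_0$ with $\mL$, which is allowed because $\phi$ identifies the rational Tate modules with the pairings scaled by $\ell^n$. After that, step 2 is the classical fact that $(\ell,\ell)$-isogenies correspond to maximal isotropic subgroups of the $\ell$-torsion with its Weil pairing.
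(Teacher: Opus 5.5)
Your proposal is correct and follows essentially the same route as the paper. The paper states the theorem as a direct consequence of the definitions of the Hecke operators, the bijections of Proposition \ref{JZ}, and the observation $H(\Z_\ell)x^{-1}\diag(1,\ell)xH(\Z_\ell)=H(\Z_\ell)\diag(1,\ell)H(\Z_\ell)$; your square-by-square check, especially the lattice comparison $T(C_\phi)\subset T(C_{\phi_G\circ\phi})\subset \ell^{-1}T(C_\phi)$ together with the compatibility of the commutator pairing with $\langle\ast,\ast\rangle_{\mL_0}$, simply spells out the step the paper leaves implicit.
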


\subsection{Row vectors and column vectors}\label{column}
In the next section, we  use the theory of Bruhat-Tits buildings.
In this theory, we use ${\mathcal O}$-lattices as lattices of column vectors.
In this subsection, we explain how to pass from the row vector expression
to the column vector expression. The notion of ${\mathcal O}$-lattices as lattices of row vectors
is naturally translated into that of ${\mathcal O}$-lattices of column vectors.
Let ${\mathcal L}'(1, p)$ denote the set of maximal ${\mathcal O}$-lattices consisting 
of column vectors.
Each element of ${\mathcal L}'(1, p)$ is a lattice which is also a right ${\mathcal O}$-module.
For an ${\mathcal O}$-lattice $L$, we set
$$
        \bar{L}^t = \Big\{
\left(
\begin{array}{c}
\bar{a} \\
\bar{b}
\end{array}
\right)
\ \Big|\ 
(a, b) \in L\Big\}.
$$
We consider the following map:
$$
\begin{array}{rccccc}
   T: &  {\mathcal L}(1, p) & \longrightarrow & {\mathcal L}'(1, p)  &    &     \\
    &   L   & \mapsto &   \bar{L}^t.
\end{array}
$$
This map $T$ is bijective and compatible with the global equivalence.
As in the case of the row vector expression, for an element $L'$ of ${\mathcal L}'(1, p)$ 
there exists $x' \in GL_2(B)$ such that $L' = x'{\mathcal O}^2$, 
where ${\mathcal O}^2$ is written as column vectors.
If $L = {\mathcal O}^2x$ with $x \in GL_2(B)$, then
$$
\bar{L}^t =\Big\{{\bar x}^t
\left(
\begin{array}{c}
\bar{a} \\
\bar{b}
\end{array}
\right)\ 
\Big|  \ 
a, b\in {\mathcal O}\Big\}
= \Big\{{\bar x}^t
\left(
\begin{array}{c}
a \\
b
\end{array}
\right)\ 
\Big|\   
a, b\in {\mathcal O}\Big\} = \bar{x}^t{\mathcal O}^2.
$$
Therefore, even if we adopt the convention that lattices are represented 
by column vectors, the situation does not change under the correspondence given by $T$ .

\subsection{The graph defined by the special 1-complex}\label{btq1} 
Put $G=\GSp_4(\Q_\ell),\ K=\GSp_4(\Z_\ell)$, and $Z=Z_{GSp_4}(\Q_\ell)$ 
for simplicity. 
In this subsection, 
we consider the graph associated to the quotient $\G_{{\rm npr}}\bs S_2$ 
where $S_2=G/ZK= \PGSp_4(\Q_\ell)/\PGSp_4(\Z_\ell)$. 
The symmetric space $S_2$ is called 
the special 1-complex of the Bruhat-Tits building of $PGSp_4$.
In our treatment of the Bruhat-Tits building, we adopt the convention that lattices are represented 
by column vectors and that the group acts on them
by left multiplication. As explained in Subsection \ref{column}, 
this convention does not affect the situation.

The edges are defined by $T(\ell)^{{\rm BT}}$ and thus in terms of lattices (see (\ref{Hecke-ope})). 
Let $t_\ell:={\rm diag}(1,1,\ell,\ell)\in G$.   
We decompose 
\begin{equation}\label{double-coset}
Kt_\ell K=\coprod_{t\in T}g_t K
\end{equation}
where $T$ is the index set so that $|T|=(\ell^2+1)(\ell+1)$. 
Note that for $k \in K$, the coset $kt_{\ell}K$ corresponds to the lattice 
$L = kt_{\ell}({\mathbf Z}_{\ell}^{\oplus 4})$, satisfying 
$\ell ({\mathbf Z}_{\ell}^{\oplus 4})\subset L \subset ({\mathbf Z}_{\ell}^{\oplus 4})$.
We can rewrite (\ref{Hecke-ope}) as 
\begin{equation}\label{Hecke-ope2}
T(\ell)^{{\rm BT}}([L])=\sum_{t\in T}[g_t L ].
\end{equation}
In this vein, two elements $v_1=\G_{{\rm npr}} g_1 ZK$ and 
$v_2=\G_{{\rm npr}} g_2 ZK$ in $\G_{{\rm npr}}\bs G/ZK$ said to be 
adjacent if $v_2=\G_{{\rm npr}} g_1 g_t ZK$ for some $t \in T$ where 
$\{g_t\}_{t\in T}$ is defined in (\ref{double-coset}). In this setting, 
we have a directed edge $v_1\lra v_2$. 

For each element $v=\G_{{\rm npr}} g ZK$, we define 
$${\rm RA}(v):=(\G_{{\rm npr}}\cap g ZK g^{-1})Z/Z$$
which naturally acts on the set $\{\G_{{\rm npr}}g g_tZK\ |\ t\in T\}$. 
For each edge $f:v_1\lra v_2$, we define the weight of $f$ by 
$$w(f):={\rm RA}(v_1)/{\rm Stab}_{{\rm RA}(v_1)}(v_2).$$

The graph in question, say ${\rm BTQ}^1_2(\ell,p)$, is a directed (regular) graph where
\begin{itemize}
\item the set of vertices $V({\rm BTQ}^1_2(\ell,p))$ is $\G_{{\rm npr}}\bs G/ZK$, and
\item the set of directed edges between two vertices $v_1=\G_{{\rm npr}} g_1 ZK$ and 
$v_2=\G_{{\rm npr}} g_2 ZK$ is defined by the adjacency condition in the above sense. 
Namely, an edge from $v_1$ from $v_2$ is $g_t$ with $t\in T$ such that 
$v_2=\G_{{\rm npr}} g_1 g_t ZK$. 
Two edges from $v_1$ to $v_2=\G_{{\rm npr}} g_1 g_t ZK$ and $v'_2=\G_{{\rm npr}} g_1 g'_t ZK$ 
($g_t,g'_t\in T$) are identified if there exists $h=z g_1 k g^{-1}_1\in {\rm RA}(v_1)$ 
such that $k g'_2 ZK=g_2 ZK$. 
\end{itemize} 

\subsection{Comparison theorem}

We will prove the following comparison theorem which plays an important role in 
our study:
\begin{thm}\label{comparison}
The identification of Proposition \ref{JZ} induces the 
following graph isomorphism
$${\mathcal G}^{{\rm MB}}(\ell, p)
\stackrel{\sim}{\lra}{\rm BTQ}^1_2(\ell,p)$$
whose Random walk matrices are given by 
$\frac{1}{(\ell^2+1)(\ell+1)}T(\ell)^{{\rm geo}}$ and 
$\frac{1}{(\ell^2+1)(\ell+1)}T(\ell)^{{\rm BT}}$ on each side respectively.  
Further, the following properties are preserved under the isomorphisms:
\begin{itemize}
\item the Hecke action of $T(\ell)^{{\rm geo}}$ or 
$T(\ell)^{{\rm BT}}$ on each set of the vertices defines neighbors of a given vertex $v$ such that  
the weighted sum of the neighbors satisfies $\ds\sum_{v'\in V,\ f:v\to v'}w(f)=(\ell^2+1)(\ell+1)$,  
\item each edge $f$ from $v_1$ to $v_2$ has an opposite $\widehat{f}$ such that  
$$|{\rm RA}(v_2)|\cdot |O_{{\rm RA}(v_1)}({\rm Ker}(f))|=|{\rm RA}(v_1)|\cdot 
|O_{{\rm RA}(v_2)}({\rm Ker}(\widehat{f}))|$$
where ${\rm Ker}(f)$ is $G$ if $v_1=[(\calX,\mL)]$ and $f$ is defined 
by a maximal totally isotropic subgroup $G\subset K_2(\mL)$ in the case of ${\mathcal G}^{{\rm MB}}(\ell, p)$, 
and $v_2 = \G_{{\rm npr}}g_1 g_t ZK$ 
if $v_1=\G_{{\rm npr}}g_1 ZK$ gives rise to an edge $v_1\lra v_2$
defined by $g_t,\ t\in T$ 
in the case of ${\rm BTQ}^1_g(\ell,p)$.
\end{itemize}
\end{thm}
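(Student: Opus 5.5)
The plan is to build the graph isomorphism in three layers: first on vertices, then on edges with their weights, and finally the two listed properties.

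\emph{Vertices and reduced automorphism groups.} The vertex bijection $\MB(p)\simeq \widetilde{\mathcal D}(A)\simeq \G_{{\rm npr}}^{\GSp_4}\bs \GSp_4(\Q_\ell)/Z\,\GSp_4(\Z_\ell)$ is Proposition \ref{JZ}. We then identify reduced automorphism groups. For a vertex $v=[(A,\mL)]$ we use an $\ell$-marking $\phi:(A,\mL_0)\to(A,\mL)$, with associated coset $gZK$, $g=P((A,\mL),\phi)$. We show that ${\rm Aut}((\calX,\mM)/\mathbf P^1)\cong{\rm Aut}(A,\mL)$ via Corollary \ref{endo} and Lemma \ref{pull-back} with $n=0$. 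We also show that conjugation by $\phi$ identifies ${\rm Aut}(A,\mL)$ with the elements $f\in\G_{{\rm npr}}(A,\mL_0)$ that fix the lattice $T(C_\phi)$ up to homothety. This set is exactly $\G_{{\rm npr}}\cap gZKg^{-1}$. Modding out by $\{\pm1\}$ on one side and by $Z$ on the other gives ${\rm RA}(v)\cong(\G_{{\rm npr}}\cap gZKg^{-1})Z/Z$. The point needing care is that an element of $\G_{{\rm npr}}$ stabilising the homothety class has similitude in $\ell^{2\Z}$, so after rescaling by a central power of $\ell$ it becomes an honest automorphism preserving $\mL$, with $\pm1$ as the only scalars left.

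\emph{Edges and random walk matrices.} A maximal isotropic $G\subset K_2(\mL)\simeq A[\ell]$ corresponds, under $\pi:V_\ell(A)\to A[\ell^\infty]$ applied at the lattice $T(C_\phi)$, to a lattice $L_1$ with $L\subset L_1\subset\ell^{-1}L$ and $L_1/L$ maximal isotropic. Here the pairing on $K_2(\mL)$ coming from the theta group is the Weil pairing of $\mL^{\otimes\ell}$ restricted to $A[\ell]$. This matches the (\ref{Hecke-ope}) description, which by Theorem \ref{Rel-HP} is compatible with $T(\ell)^{{\rm geo}}$. The bijection $G\leftrightarrow L_1$ is equivariant for ${\rm RA}(v)$ by the previous step. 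Hence edge identifications, and the weights $w(f)=|{\rm RA}(v_1)/{\rm Stab}(G)|$, match. Both weighted out-degrees then equal the number $(\ell^2+1)(\ell+1)$ of maximal isotropic subgroups of $(\Z/\ell)^4$, or equivalently $|T|$. The random walk operators are therefore $\frac{1}{N_2(\ell)}T(\ell)$ on both sides, and they coincide by Theorem \ref{Rel-HP}.

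\emph{The reversibility identity.} This is the main obstacle, and we would prove it on the BT side by orbit counting. The edge $f$ given by $g_t$ has opposite $\widehat f$ given by $\ell t_\ell^{-1}$, i.e.\ $L_1\supset\ell L_1\ldots$ returns to $[L]$, corresponding to the dual isogeny. We count the set
$$\{(\gamma_1,\gamma_2)\}\ \text{of pairs of lifts realising an edge } v_1\to v_2 \text{ in } \G_{{\rm npr}}\bs(G/ZK\times G/ZK),$$
where the pair of vertices is adjacent. This is done in two ways, using that stabilisers in $\G_{{\rm npr}}/\G_{{\rm npr}}\cap Z$ of $g_1ZK$ and $g_2ZK$ are ${\rm RA}(v_1)$ and ${\rm RA}(v_2)$. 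The orbit of the pair $(g_1ZK,g_1g_tZK)$ has stabiliser ${\rm Stab}_{{\rm RA}(v_1)}(v_2)={\rm Stab}_{{\rm RA}(v_2)}(v_1)$. Hence
$$|O_{{\rm RA}(v_1)}({\rm Ker} f)|=\frac{|{\rm RA}(v_1)|}{|{\rm Stab}|},\qquad |O_{{\rm RA}(v_2)}({\rm Ker}\widehat f)|=\frac{|{\rm RA}(v_2)|}{|{\rm Stab}|},$$
which gives the identity, analogous to \cite[(3.1)]{FS}. We transport it to ${\mathcal G}^{{\rm MB}}(\ell,p)$ through the isomorphism just built. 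The subtle point to verify is that $\G_{{\rm npr}}$ acts through a finite quotient on these stabilisers, i.e.\ $\G_{{\rm npr}}\cap gZKg^{-1}$ is finite modulo $Z$. This follows from definiteness of $B$, since ${\rm Aut}(A,\mL)$ is finite.
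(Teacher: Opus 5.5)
Your outline follows the paper's proof. Vertices come from Proposition \ref{JZ}, edges and random walk operators from Theorem \ref{Rel-HP}, and the reversibility identity from the stabiliser argument of \cite[Proposition 2.12]{ATY}: the stabiliser of an adjacent pair of lifts is the same subgroup of ${\rm RA}(v_1)$ and of ${\rm RA}(v_2)$.

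The step that fails is ${\rm Aut}((\calX,\mM)/\mathbf P^1)\cong{\rm Aut}(A,\mL)$ ``via Corollary \ref{endo} and Lemma \ref{pull-back}''. Corollary \ref{endo} identifies relative endomorphisms with endomorphisms of the generic fibre. These form in general a proper subring of ${\rm End}(A)$, and the paper only records ${\rm End}(\calX/\mathbf P^1)\subset{\rm End}(E\times E)$.

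To see why, take $\theta\in{\rm Aut}(A,\mL)$. It preserves $K(\mL)=A[F]\cong\alpha_p\times\alpha_p$ but permutes the subgroups $\iota_t(\alpha_p)$. Concretely, it acts on the base $\mathbf P^1=\mathbf P({\rm Lie}\,A)$ through its reduction $\theta_0\in{\rm GL}_2(\calO/F\calO)={\rm GL}_2(\bbF_{p^2})$. What descends along $\pi$ is therefore $\theta\times[\theta_0]$, an automorphism of $\calX$ lying over a possibly nontrivial automorphism of $\mathbf P^1$. It is an automorphism over $\mathbf P^1$ only when $\theta_0$ is scalar.

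A short computation with $j(\mL)\in\Lambda$ shows $\theta_0\in{\rm SL}_2(\bbF_{p^2})$. Reduction is injective on finite subgroups for $p\ge5$, so for such $p$ the automorphisms over the identity of $\mathbf P^1$ are only $\pm1$. By contrast, ${\rm Aut}(A,\mL)$ is usually larger: for $p=11$ there is one class, and the mass $(p^2-1)/5760=1/48$ gives $|{\rm Aut}(A,\mL)|=48$.

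Consequently, the groups $(\G_{\rm npr}\cap gZKg^{-1})Z/Z$ you compute on the building side are ${\rm Aut}(A,\mL)/\{\pm1\}$, not the strictly relative groups. With the strictly relative groups, the edge identifications, the weights (all equal to $1$ for $p\ge5$) and the inner product on ${\mathcal G}^{\rm MB}(\ell,p)$ would not match those of ${\rm BTQ}^1_2(\ell,p)$.

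For the theorem and your proof to hold, ${\rm RA}$ on the Moret--Bailly side must be read as ${\rm Aut}(A,\mL)/\{\pm1\}$, i.e.\ automorphisms of the family that may move the base. This is what the paper tacitly uses when it passes to $\widetilde{\mathcal D}(A)$, and it is the group to which the bound $|{\rm RA}(v)|\le120$ refers. These automorphisms are obtained by descending $\theta\times[\theta_0]$, not from Corollary \ref{endo}.

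With that correction, the rest of your argument is fine and coincides with the paper's:
\begin{itemize}
\item the lattice-stabiliser description of ${\rm RA}(v)$;
\item the ${\rm RA}$-equivariant bijection between maximal isotropic $G\subset A[\ell]$ and lattices $L\subset L_1\subset\ell^{-1}L$;
\item the orbit count giving the reversibility identity.
\end{itemize}
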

\begin{proof}
The compatibility of the Hecke operators, hence adjacency matrices follows from Theorem \ref{Rel-HP} 
and this yields the first property in the claim. The remaining formula follows 
from the argument for \cite[Proposition 2.12]{ATY}. 
\end{proof}
We define the finite Hilbert space $\ell^2({\rm BTQ}^1_g(\ell,p))$
associated with ${\rm BTQ}^1_g(\ell,p)$ in a manner similar to that for
${\mathcal G}^{{\rm MB}}(\ell,p)$ introduced in Section~\ref{intro}.

\begin{cor}\label{comp1}The identification of Proposition \ref{JZ} induces the 
following isomorphism 
$$\ell^2({\mathcal G}^{{\rm MB}}(\ell, p))\stackrel{\sim}{\lra}\ell^2({\rm BTQ}^1_g(\ell,p))$$
 as a finite dimensional Hilbert space so that it preserves the inner products and 
 two randam walk operators corresponds each other.
\end{cor}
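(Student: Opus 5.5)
The plan is to deduce Corollary \ref{comp1} directly from Theorem \ref{comparison}. Both Hilbert spaces are spaces of all complex-valued functions on the vertex sets. Each carries the weighted inner product $\langle F_1,F_2\rangle=\sum_v F_1(v)\overline{F_2(v)}|{\rm RA}(v)|^{-1}$. The random walk operator is $(PF)(v)=N_2(\ell)^{-1}\sum_{f:v'\to v}F(v')w(f)$ on each side.

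Let $\Phi: V({\mathcal G}^{\MB}(\ell,p))\to V({\rm BTQ}^1_2(\ell,p))$ be the bijection of vertices coming from Proposition \ref{JZ}. Define the pullback $\Phi^\ast:\ell^2({\rm BTQ}^1_2(\ell,p))\to\ell^2({\mathcal G}^{\MB}(\ell,p))$ by $F\mapsto F\circ\Phi$. This is clearly a linear bijection, and I take its inverse as the desired isomorphism.

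\textbf{Step 1 (inner products).} It suffices to check $|{\rm RA}(v)|=|{\rm RA}(\Phi(v))|$ for each vertex $v$. Let $v$ correspond to $(A,\mL)$ with $\ell$-marking $\phi:(A,\mL_0)\to(A,\mL)$, and let $\Phi(v)=\G_{\rm npr}gZK$. Then ${\rm RA}(v)={\rm Aut}(A,\mL)/\{\pm1\}$, using Corollary \ref{endo} to pass between relative automorphisms of $\calX/{\bf P}^1$ and ${\rm Aut}(A,\mL)$. Conjugation by $\phi$, namely $h\mapsto \phi^{\dagger\dagger}\circ h\circ\phi$ up to the scalar $\ell^{-n}$ as in the $\ell$-marking computation of Subsection \ref{ellmarkings}, sends ${\rm Aut}(A,\mL)$ into $\G_{\rm npr}(A,\mL_0)$. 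The image consists exactly of the elements that preserve the lattice $T(C_\phi)$ up to scalar. Under Corollary \ref{iden-npr} and (\ref{iso}), these elements are the elements of $\G_{\rm npr}\cap gZKg^{-1}$.

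The elements of $\G_{\rm npr}\cap Z$ are $\pm\ell^{k}$. Modding out by $Z$ therefore matches modding out by $\{\pm1\}$, because the $\ell$-power scalars have been normalized away. This gives ${\rm RA}(v)\simeq{\rm RA}(\Phi(v))$, and so $\langle\Phi^\ast F_1,\Phi^\ast F_2\rangle=\langle F_1,F_2\rangle$.

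\textbf{Step 2 (random walks).} Theorem \ref{comparison} says $\Phi$ is a graph isomorphism under which $T(\ell)^{\rm geo}$ corresponds to $T(\ell)^{\rm BT}$ on divisor groups, by Theorem \ref{Rel-HP}. The operator $P$ is $N_2(\ell)^{-1}$ times the transpose action of the Hecke correspondence on functions. Edges are identified modulo ${\rm RA}$ in the same way on both sides, and by Step 1 ${\rm RA}(v_1)\simeq{\rm RA}(\Phi(v_1))$ compatibly with the action on outgoing edges (kernels $G$ corresponding to lattices $g_tL$). Hence the weights satisfy $w(f)=w(\Phi(f))$, and $P\circ\Phi^\ast=\Phi^\ast\circ P$ follows by summing over incoming edges.

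The main obstacle is Step 1: identifying the automorphism groups so that they match the stabilizers ${\rm RA}(v)=(\G_{\rm npr}\cap gZKg^{-1})Z/Z$ exactly, including the normalization of the central $\ell$-power scalars. I would handle this by following the argument of \cite[Proposition 2.12]{ATY}, already invoked in the proof of Theorem \ref{comparison}. The reversibility identity stated there is also consistent with, and serves as a check on, the equality of weights in Step 2.
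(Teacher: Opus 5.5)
Your route is the paper's. Corollary \ref{comp1} is read off from Theorem \ref{comparison}: the vertex bijection of Proposition \ref{JZ}, the Hecke compatibility of Theorem \ref{Rel-HP}, and a matching of reduced automorphism groups that the paper delegates to [ATY, Prop.~2.12]. Your $\ell$-marking computation correctly supplies that last piece on the lattice side. Conjugation by $\phi$ (with $\phi^{-1}=\ell^{-n}\phi^{\dagger\dagger}$) identifies ${\rm Aut}(A,\mL)$ with the similitude-one elements of $\G_{\rm npr}(A,\mL_0)$ stabilizing $T(C_\phi)$. An element stabilizing the homothety class of $T(C_\phi)$ has similitude $\ell^{2k}$, and $\G_{\rm npr}\cap Z=\{\pm\ell^k\}$. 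Hence ${\rm Aut}(A,\mL)/\{\pm1\}\simeq(\G_{\rm npr}\cap gZKg^{-1})Z/Z$.

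The step that does not work as justified is ${\rm RA}(v)={\rm Aut}(A,\mL)/\{\pm1\}$ via Corollary \ref{endo}. That corollary only identifies relative endomorphisms with endomorphisms of the generic fibre. An endomorphism of $A$ descends to $X_t=A/\iota_t(\alpha_p)$ only if it preserves $\iota_t(\alpha_p)$. An element of ${\rm Aut}(A,\mL)$ preserves $K(\mL)\cong\alpha_p\times\alpha_p$ but in general permutes its $\alpha_p$-subgroups. So it induces an automorphism of $\calX$ lying over a possibly non-trivial automorphism of $\mathbf{P}^1$; only elements acting on ${\rm Lie}(A)$ by scalars are automorphisms over $\mathbf{P}^1$.

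This does occur. For $p=7$ there is a single class, and the mass $(p^2-1)/5760$ gives $|{\rm Aut}(A,\mL)|=120$. The kernel of the action on ${\rm Lie}(A)\simeq k^2$ is a $7$-group, hence trivial, while the scalars $\mathbb{F}_{49}^\times$ form a cyclic group of order $48$. So under the literal reading of ${\rm Aut}((\calX,\mM)/\mathbf{P}^1)$ the weights $1/|{\rm RA}(v)|$ on the two sides need not agree.

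The corollary, and your Step 1, hold once ${\rm RA}(v)$ is taken to be ${\rm Aut}(A,\mL)/\{\pm1\}$, i.e.\ automorphisms of the family that may move the base. This is what the paper tacitly uses: its bound $|{\rm RA}(v)|\le 120$ comes from lattice automorphism groups. You should state this convention explicitly rather than cite Corollary \ref{endo}.

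Step 2 does not actually need Step 1. Whatever group is used to identify edges, $\sum_{f:v'\to v}w(f)$ is the number of maximal isotropic subgroups of $K_2$ at $v'$ whose quotient lies in the class $v$. So the random-walk operators correspond by Theorem \ref{Rel-HP} alone, and only the inner product depends on $|{\rm RA}(v)|$.
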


\begin{cor}\label{lwm}Keep the notation being as above. 
The graph ${\mathcal G}^{{\rm MB}}(\ell, p)$ is connected. 
\end{cor}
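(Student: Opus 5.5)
By Theorem \ref{comparison}, $\mathcal{G}^{\rm MB}(\ell,p)\simeq{\rm BTQ}^1_2(\ell,p)$ as directed graphs. Connectedness is a statement only about which pairs of vertices are joined by a directed path, so it suffices to prove that ${\rm BTQ}^1_2(\ell,p)$ is connected. The vertex set of that graph is $\G_{\rm npr}\bs G/ZK$ with $G=\GSp_4(\Q_\ell)$, $K=\GSp_4(\Z_\ell)$, $Z=Z_{GSp_4}(\Q_\ell)$. An edge from $\G_{\rm npr}gZK$ goes to $\G_{\rm npr}gg_tZK$ with $g_t\in Kt_\ell K$. Since the vertex classes are $\G_{\rm npr}$-orbits on $S_2=G/ZK$, it is enough to prove the following local statement: any two points $gZK, g'ZK$ of $S_2$ are joined by a finite chain of steps of the form $hZK\mapsto hg_tZK$. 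Projecting such a chain to the quotient then gives a directed path between the corresponding vertices.

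The key step is to show that the monoid generated by $Kt_\ell K$, together with $Z$ and $K$, is all of $G$. I would argue as follows. By the Cartan decomposition, $G=\coprod K\,\diag(\ell^{a_1},\ell^{a_2},\ell^{c-a_1},\ell^{c-a_2})\,K$. Modulo $Z$ we may take exponents $\ge0$, and, up to reordering and the Weyl group, every such dominant diagonal element is a product of $t_\ell=\diag(1,1,\ell,\ell)$ and $s=\diag(1,\ell,\ell^2,\ell)$, times a central element.

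Next I reduce $s$ to $t_\ell$'s. The product $t_\ell\,w\,t_\ell$, for a suitable Weyl element $w\in K$ swapping the first two symplectic coordinate pairs, equals $\diag(1,\ell,\ell^2,\ell)$ up to a permutation in $K$. Hence $KsK\subset Kt_\ell K\,Kt_\ell K$, and so every double coset $KgK$ lies in a product $Z\cdot(Kt_\ell K)^n$. Writing $g'^{-1}$-adjusted: given $g,g'$, set $h=g^{-1}g'\in Z(Kt_\ell K)^n$. Then $h=zk_1t_\ell k_2t_\ell\cdots$, and each partial product walks along the edges. Indeed, $g k\, t_\ell K$ for $k\in K$ is one of the $g g_t K$, because $kt_\ell K\subset Kt_\ell K=\coprod g_tK$. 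Iterating, each successive coset $gk_1t_\ell k_2\cdots t_\ell K$ has the form $g g_{t_1}g_{t_2}\cdots K$. This is exactly a directed path in $S_2$ from $gZK$ to $g'ZK$.

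I expect the main obstacle to be the explicit identity $KsK\subset(Kt_\ell K)^2$ in $\GSp_4$, equivalently that the lattices at distance two along $T(\ell)$ include the non-hyperspecial-type neighbour. The plan is to check this with lattices. Take $L_0=\Z_\ell^4$ and $L_1=\ell^{-1}\langle e_1,e_2\rangle+L_0$ (Lagrangian). Then take $L_2=\ell^{-1}\langle e_1,f_2\rangle+\ell^{-1}$-scaled choices in $\ell^{-1}L_1/L_1$, so that $[L_2]$ has elementary divisors $(1,\ell,\ell,\ell^2)$ relative to $L_0$. This requires picking a Lagrangian in $\ell^{-1}L_1/L_1$ meeting $L_1$'s structure appropriately, which is a concrete finite check.

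A cleaner alternative, if that gets messy, is to use strong approximation directly. In Proposition \ref{iko-mb} the right $H(\Q_\ell)$-action is transitive, and the subgroup generated by $K$ and $Kt_\ell K$ is open and unbounded modulo the centre, and normalized suitably. By Tits/Howe–Moore type simplicity of $\PGSp_4(\Q_\ell)^+$, it contains $\PSp_4(\Q_\ell)$. Combined with the similitude $\nu(t_\ell)=\ell$ generating $\Q_\ell^\times/\Z_\ell^\times(\Q_\ell^\times)^2$ modulo the centre, this gives all of $G/Z$. Directedness is harmless either way: by the reversibility property in Theorem \ref{comparison} every edge has an opposite edge, so connectivity and strong connectivity coincide.
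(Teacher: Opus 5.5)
Your argument is correct, and its skeleton matches the paper's. You reduce via Theorem~\ref{comparison} to ${\rm BTQ}^1_2(\ell,p)$ and deduce connectedness of the quotient from connectedness of the special 1-complex $S_2$. The difference is at that last step: the paper simply cites \cite[Proposition 4.3]{ATY} for the connectedness of $S_2$, whereas you prove it. By the Cartan decomposition, every double coset modulo $Z$ is $K s^m t_\ell^n K$ with $m,n\ge 0$, since $t_\ell$ and $s$ generate the dominant cone of $\PGSp_4$. The inclusion $KsK\subset (Kt_\ell K)^2$ then gives $G=Z\bigcup_N (Kt_\ell K)^N$, hence a directed path of length $2m+n$ between any two points of $S_2$. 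This buys three things: a self-contained proof, an explicit bound on path lengths in terms of relative position, and strong connectivity directly, without needing the opposite-edge property. The citation is simply shorter.

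There is one slip. The Weyl element you describe, exchanging the symplectic pairs $(e_1,f_1)$ and $(e_2,f_2)$, commutes with $t_\ell$. So $t_\ell w t_\ell K=t_\ell^2K$, which is the wrong double coset. The right choice is the long-root reflection $e_1\mapsto f_1$, $f_1\mapsto -e_1$, fixing $e_2,f_2$. For it, $w t_\ell w^{-1}=\diag(\ell,1,1,\ell)$ and $t_\ell\cdot w t_\ell w^{-1}=\diag(\ell,1,\ell,\ell^2)$, which is Weyl-conjugate to $s=\diag(1,\ell,\ell^2,\ell)$. Equivalently, in your lattice check take the Lagrangian spanned by $\ell^{-2}e_1,\ell^{-1}f_2$ in $\ell^{-1}L_1/L_1$; then $L_2=\diag(\ell^{-2},\ell^{-1},1,\ell^{-1})L_0=\ell^{-2}sL_0$, as required.

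The Tits/Howe--Moore alternative is unnecessary and, as written, too loose. The relevant fact is that a non-compact open subgroup of the simply connected group $\Sp_4(\Q_\ell)$ is the whole group. You would still have to verify non-compactness of the intersection with $\Sp_4(\Q_\ell)$, so I would drop that paragraph.
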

\begin{proof} Since the special 1-complex $S_2=\PGSp_4(\Q_\ell)/\PGSp_4(\Z_\ell)$ 
is connected as a subcomplex of the Bruhat-Tits building of $PGSp_4$ by \cite[Proposition 4.3]{ATY}, 
its quotient ${\rm BTQ}^1_2(\ell,p)$ is also connected. 
The claim follows from Theorem \ref{comparison}. 
\end{proof}

\section{Algebraic modular forms}\label{AMFs}
In this section, we refer to \cite[Section 6]{ATY} or to \cite[Chapter II]{Gross}.  

Let $\A_\Q$ be the ring of adeles of $\Q$. 
Put $U_{{\rm npr}}:=K(L_{{\rm npr}})$ for simplicity.
Since $B$ is definite, $H(\R)=G_2(\R)={\rm GUSp}_4(\R)$ is compact modulo its center. It follows from  (\ref{desc1}) that 
\begin{equation}\label{strong-approx} 
H(\Q)\bs H(\A_\Q)/(U_{{\rm npr}}\times H(\R)^+)=H(\Q)\bs H(\A_f)/U_{{\rm npr}}
\end{equation}
where $H(\R)^+$ stands for the connected component of the identity element. 
According to Chapter II-4 of \cite{Gross}, we define 
the $\C$-vector space $M(U_{{\rm npr}})$ consisting of all locally constant functions $f:H(\A_\Q)\lra \C$ such that 
$$f(\gamma g k g_\infty)=f(g),\ g\in H(\A_\Q)$$
for all $\gamma\in H(\Q),\ k\in U_{{\rm npr}}$, and $g_\infty\in H(\R)^+$. 
Put $h:=H_2(1,p)$ for simplicity and pick $\{\gamma_i\}_{i=1}^{h}$ with $\gamma_i\in H(\A_f)$ a complete system of the 
representatives of (\ref{strong-approx}). 
By definition, the space $M(U_{{\rm npr}})$ is generated by 
the characteristic functions $\varphi_i,\ 1\le i\le h$ of 
$H(\Q)\gamma_i U_{{\rm npr}}$. Hence we have 
$M(U_{{\rm npr}})\simeq \C^{\oplus h}$. 
We define a hermitian inner product $( \ast,\ast )$ on $M(U_{{\rm npr}})$ by 
\begin{equation}\label{pairing-alg}( f_1,f_2 ):=\sum_{\gamma\in H(\Q)\bs H(\A_f)/U_{{\rm npr}}}
f_1(\gamma)\overline{f_2(\gamma)}\frac{1}{|{\rm RA}(\gamma)|}
\end{equation}
for $f_1,f_2\in M(U_{{\rm npr}})$ where ${\rm RA}(\gamma):=
(H(\Q)\cap \gamma  U_{{\rm npr}} \gamma^{-1})Z(\A_f)/Z(\A_f)$.
Let $\varphi$ be a non-zero constant function on $H(\A_\Q)$.  
We denote by $M(U_{{\rm npr}})^0$ the orthogonal complement of $\C\varphi$ in $M(U_{{\rm npr}})$. 
Clearly, ${\rm dim}(M(U_{{\rm npr}})^0)=h-1=H_2(1,p)-1$. 
\begin{Def}\label{AMF} 
Each element of $M(U_{{\rm npr}})$ is said to be an algebraic modular form on $H(\A_\Q)=GUSp_4(\A_\Q)$ 
of weight zero with level $U_{{\rm npr}}$. 
\end{Def}

For each prime $\ell\neq p$ we define the (unramified) Hecke algebra 
$$\mathcal{H}_\ell=\C[H(\Z_\ell)\bs H(\Q_\ell)/H(\Z_\ell)]
\stackrel{(\ref{Ghitza})}{\simeq} 
\C[\GSp_4(\Z_\ell)\bs \GSp_4(\Q_\ell)/\GSp_4(\Z_\ell)]$$ at $\ell$ 
which is generated by the characteristic functions of form 
$H(\Z_\ell)gH(\Z_\ell)$ for $g\in H(\Q_\ell)$. 
Let $e_\ell$ be the characteristic function of $H(\Z_\ell)$ which is the identity element of 
$\mathcal{H}_\ell$. 
Let $\mathbb{T}^{(p)}=\otimes'_{\ell\neq p}\mathcal{H}_\ell$ be the 
restricted tensor product of $\{\mathcal{H}_\ell\}_{\ell\neq p}$ with respect to the 
identity elements $\{e_\ell\}_{\ell\neq p}$. 
We call $\mathbb{T}^{(p)}$ the Hecke ring outside $p$ and 
it is well-known that $\mathbb{T}^{(p)}$ acts on $M(U_{{\rm npr}})$ and also on $M(U_{{\rm npr}})^0$ (cf. Section 6 of \cite{Gross}).
 
\begin{Def}\label{HEAMF}\upshape{
Each element of $M(U_{{\rm npr}})$ is said to be a Hecke eigenform outside $p$ if 
it is a simultaneous eigenform for all elements in $\mathbb{T}^{(p)}$. }
\end{Def}
By using the Hermitian paring (\ref{pairing-alg}), 
we can show that there exists an orthonormal basis $HE(U_{{\rm npr}})$ of $M(U_{{\rm npr}})^0$ which 
consists of Hecke eigenforms outside $p$. 
For each non-zero $F$ in $HE(U_{{\rm npr}})$ and an element $T\in \mathbb{T}^{(p)}$, 
we denote by $\lambda_F(T)$ the eigenvalue of $F$ for $T$. 
Since $F$ has the trivial central character, $\lambda_T(F)$ is a real number. 
Let  $T(\ell)$ be the characteristic function of 
$H(\Z_\ell)t^H_\ell H(\Z_\ell)$ where 
$t^H_\ell=\diag(1,\ell)$ corresponds to $\diag(1,1,\ell,\ell)\in \GSp_4(\Q_\ell)$ under 
(\ref{Ghitza}). 
The element $T(\ell)\in \mathcal{H}_\ell$ is called the Hecke operator at $\ell$. 
Explicitly, if we write $H(\Z_\ell)t_\ell H(\Z_\ell)=\coprod_i g_i H(\Z_\ell)$, for each 
$F\in M(U_{{\rm npr}})$, its action is given by 
$$T(\ell)F=\sum_i R(g_i)F$$
where $R(g_i)F(h)=F(hg_i)$ for $h\in H(\A_\Q)$. Comparing to (\ref{Hecke-ope2}), 
$T(\ell)$ corresponds to $T(\ell)^{{\rm BT}}$ through  the natural identifications: 
\begin{equation}\label{BTU}
\G_{{\rm npr}}^{H}\bs H(\Q_\ell)/H(\Z_\ell) 
= H(\Q)\bs H(\A_f)/U_{{\rm npr}}=H(\Q)\bs H(\A_\Q)/(U_{{\rm npr}}\times H(\R)^+).
\end{equation}

As a link to the previously studied graphs, we obtain the following result, which
follows almost immediately from the definition of $M(U_{{\rm npr}})$ together with 
Corollary \ref{comp1} and Theorem \ref{Rel-HP}. We therefore omit the proof.
\begin{prop}\label{comp2}
The identifications of Corollary \ref{comp1} with Theorem \ref{Rel-HP} 
and (\ref{BTU}) yield the following isomorphisms 
$$
\ell^2({\mathcal G}^{{\rm MB}}(\ell, p))\simeq \ell^2({\rm BTQ}^1_g(\ell,p))
\stackrel{(\ref{BTU})}{\simeq} M(U_{{\rm npr}})
$$
as a finite Hilbert space such that the random walk operators 
$\frac{1}{N_2(\ell)}T(\ell)^{{\rm geo}}$ on $\ell^2({\mathcal G}^{{\rm MB}}(\ell, p))$ and 
$\frac{1}{N_2(\ell)}T(\ell)^{{\rm BT}}$ on $\ell^2({\rm BTQ}^1_g(\ell,p))$ correspond to
the normalized Hecke operator $\frac{1}{N_2(\ell)}T(\ell)$ on $M(U_{{\rm npr}})$ where 
$N_2(\ell)=(\ell^2+1)(\ell+1)$. 
\end{prop}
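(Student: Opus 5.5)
We prove Proposition \ref{comp2} by composing two identifications: the Hilbert-space isomorphism of Corollary \ref{comp1}, and a direct identification of $\ell^2({\rm BTQ}^1_2(\ell,p))$ with $M(U_{{\rm npr}})$ through the double-coset bijections (\ref{BTU}). After that, only two things remain to be checked: that the weights in the inner products match, and that the Hecke operators correspond.

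\emph{Step 1: the underlying isomorphism.} By (\ref{BTU}), the vertex set $\G^H_{{\rm npr}}\bs H(\Q_\ell)/H(\Z_\ell)$ of ${\rm BTQ}^1_2(\ell,p)$ is identified with $H(\Q)\bs H(\A_f)/U_{{\rm npr}}$. The bijection is induced by $g_\ell\mapsto (1,\dots,1,g_\ell,1,\dots)$, and it is a bijection by the strong approximation decomposition (\ref{sapprox1}). Since $M(U_{{\rm npr}})$ is spanned by the characteristic functions $\varphi_i$ of the double cosets, we define the map by sending a function $F$ on vertices to the function $f$ on $H(\A_\Q)$ with $f(\gamma_i k g_\infty)=F(v_i)$. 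This map is a linear isomorphism $\C^{h}\simeq M(U_{{\rm npr}})$.

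\emph{Step 2: the inner products.} We need to show $|{\rm RA}(v)|=|{\rm RA}(\gamma)|$ for corresponding $v$ and $\gamma$. Take a representative $\gamma=(1^{(\ell)},g)$ with $g\in H(\Q_\ell)$. An element of $H(\Q)\cap\gamma U_{{\rm npr}}\gamma^{-1}$ must lie in $K(L_{{\rm npr}})^{(\ell)}$ away from $\ell$ and in $gH(\Z_\ell)g^{-1}$ at $\ell$. Hence this group equals $\G^H_{{\rm npr}}\cap gH(\Z_\ell)g^{-1}$. Quotienting by the center, we must compare $Z(\A_f)$ with $Z(\Q_\ell)$. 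The center $Z(\Q)\cap U_{{\rm npr}}$-type elements are $\pm1$, which matches the modding out by $Z$ in ${\rm RA}(v)$. We expect this step to be the only delicate point: one has to check that the definitions of ${\rm RA}$ agree up to the same central subgroup $\{\pm1\}$. This is also consistent with the geometric ${\rm RA}$, which is ${\rm Aut}/\{\pm1\}$ through Lemma \ref{x} and Corollary \ref{iden-npr}.

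\emph{Step 3: the Hecke operators.} We compare $T(\ell)F=\sum_i R(g_i)F$, where $H(\Z_\ell)t^H_\ell H(\Z_\ell)=\coprod g_iH(\Z_\ell)$, with (\ref{Hecke-ope2}). Under (\ref{Ghitza}) the coset decomposition is the same as (\ref{double-coset}), so right translation by $g_i$ at the $\ell$-component sends $\varphi$ evaluated at $\G g_1$ to the sum over the $\G g_1g_tZK$. The multiplicities of the vertices reached agree with the weights $w(f)$ counted via the ${\rm RA}(v_1)$-orbits. On the dual (function) side the operator becomes the adjoint form $(PF)(v)=\frac1{N_2(\ell)}\sum_{f:v'\to v}w(f)F(v')$. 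The two descriptions agree by the reversibility identity in Theorem \ref{comparison}, which converts the sum over incoming edges into one over outgoing edges. Together with Theorem \ref{Rel-HP}, which matches $T(\ell)^{{\rm geo}}$ with $T(\ell)^{{\rm BT}}$, and the observation there that $H(\Z_\ell)x^{-1}\diag(1,\ell)xH(\Z_\ell)=H(\Z_\ell)\diag(1,\ell)H(\Z_\ell)$, this shows that $\frac1{N_2(\ell)}T(\ell)$ corresponds to both random walk operators. This completes the proof of Proposition \ref{comp2}.
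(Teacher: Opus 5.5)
The gap is in the second half of Step 3. Your first computation there is correct. With $\gamma=(1^{(\ell)},g)$ one gets $(T(\ell)F)(v)=\sum_{t\in T}F(\G g g_tZK)=\sum_{f:v\to v'}w(f)F(v')$, which is an \emph{outgoing} sum: edges out of $v$ are ${\rm RA}(v)$-orbits on the $N_2(\ell)$ building neighbours, so $\sum_{f:v\to v'}w(f)=\#\{t\in T:\ \G g g_tZK=v'\}$.

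The incoming-edge formula $(PF)(v)=\frac1{N_2(\ell)}\sum_{f:v'\to v}w(f)F(v')$ does not agree with this, and reversibility does not make it agree. From $w(f)/|{\rm RA}(v_1)|=w(\widehat f)/|{\rm RA}(v_2)|$ you get
\[
\sum_{f:v'\to v}w(f)F(v')=\sum_{f:v\to v'}w(f)\,\frac{|{\rm RA}(v')|}{|{\rm RA}(v)|}\,F(v'),
\]
so the incoming operator is $D\bigl(\tfrac1{N_2(\ell)}T(\ell)\bigr)D^{-1}$ with $D=\mathrm{diag}(|{\rm RA}(v)|^{-1})$. The two coincide only if $|{\rm RA}|$ is constant on $V$, which fails in general. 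When $|{\rm RA}|$ is not constant, the incoming operator is not even self-adjoint for $\langle\cdot,\cdot\rangle$; it is self-adjoint for the $|{\rm RA}(v)|$-weighted product instead. So no isometry can carry it to the self-adjoint $\frac1{N_2(\ell)}T(\ell)$.

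You also have the duality reversed. The incoming form is not the function-side version of $T(\ell)^{\rm BT}$ but its transpose, i.e. the effect of the correspondence on the coefficients of a divisor $\sum_v c_v[v]$. On functions the correspondence acts by the outgoing sum, as your own computation shows.

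The repair is to take the random walk operator on $\ell^2(\mathcal{G}^{\rm MB}(\ell,p))$ to be the outgoing form $F\mapsto\bigl(v\mapsto\frac1{N_2(\ell)}\sum_{f:v\to v'}w(f)F(v')\bigr)$, the pullback of functions along $T(\ell)^{\rm geo}$. With that choice, Steps 1 and 2 and the first half of Step 3 already prove the proposition. This is the ``immediate from the definitions'' argument the paper has in mind when it omits the proof.

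Reversibility is then needed only for one of two things. Either it shows that this operator is self-adjoint for $\langle\cdot,\cdot\rangle$ (stationary measure proportional to $|{\rm RA}(v)|^{-1}$). Or, if you prefer to work in ${\rm Div}(S)_\Z$, it shows that $F\mapsto\sum_vF(v)|{\rm RA}(v)|^{-1}[v]$ intertwines the pushforward $T(\ell)^{\rm geo}$ with the outgoing sum. The incoming form is merely similar to $\frac1{N_2(\ell)}T(\ell)$. That suffices for the spectral bounds of Theorem \ref{main1}, but not for the Hilbert-space correspondence.

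Separately, make the central comparison in Step 2 explicit. If $h=zgkg^{-1}\in\G^H_{\rm npr}$ with $z\in Z$ and $k\in K$, then $\nu(h)=\pm\ell^{n}$ and $|z|_\ell^2=|\nu(h)|_\ell$ force $z\in\ell^{n/2}\Z_\ell^\times$. Hence $\G^H_{\rm npr}\cap gZKg^{-1}=\ell^{\Z}(\G^H_{\rm npr}\cap gKg^{-1})$, and both versions of ${\rm RA}$ equal $(\G^H_{\rm npr}\cap gKg^{-1})/\{\pm1\}$.
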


We denote by $S_3(K(p))$ the space of all Siegel paramodular cusp forms of weight three with level $p$ where 
$K(p)\subset \GSp_4(\Q)$ is the paramodular group of level $p$ (see \cite[Section 2]{Ibu-dim}). 
Using the identification $\mathcal{H}_\ell=\C[\GSp_4(\Z_\ell)\bs \GSp_4(\Q_\ell)/\GSp_4(\Z_\ell)]$ for each prime $\ell\neq p$, 
one can act $\mathcal{H}_\ell$ on $S_3(K(p))$ in the usual way.
We also denote by $S_3(K(p))^{{\rm CAP}}$ the space generated by Hecke eigen CAP forms. 
It is known that any Hecke eigen CAP form in  $S_3(K(p))$ is of Saito-Kurokawa type (see \cite{Schmidt}). Let $S_3(K(p))^{{\rm temp}}$ be the orthogonal complement of 
$S_3(K(p))^{{\rm CAP}}$ with respect to the Petersson inner product.  It follows from \cite[Theorem 3.2]{PS} with 
\cite{Wei}, for any Hecke eigen form $F$ in  $S_3(K(p))^{{\rm temp}}$, 
the corresponding cuspidal representation $\pi_F$ is tempered everywhere.   
Let $S_4(\G_0(p))^{-}$ be the space of elliptic cusp forms of weight 4 with 
respect to $\G_0(p)$ with the Atkin-Lehner eigenvalue $-1$. 
It also follows from \cite[Theorem 5.3 and its proof]{Schmidt} that 
there is a $\C$-linear isomorphism 
$${\rm SK}:S_4(\G_0(p))^{-}\lra S_3(K(p))^{{\rm CAP}}$$
sending Hecke eigenforms to Hecke eigenforms. 
Thus, we have an estimation
\begin{equation}\label{SKest}
{\rm dim}(S_3(K(p))^{{{\rm CAP}}})={\rm dim}(S_4(\G_0(p))^{-})\le {\rm dim}(S_4(\G_0(p)))\le 
3\Big(\frac{p+1}{12}-1\Big)+8=\frac{p+21}{4}
\end{equation}
by the dimension formula \cite[p.88]{DS}. 
Further, if $f=\ds\sum_{n>0}a_n(f)q^n\in S_4(\G_0(p))^{-}$ is a newform, then the eigenvalue of 
${\rm SK}(f)$ for 
$T(\ell)$ is given by 
\begin{equation}\label{evSK}
\lambda_{{\rm SK}(f)}(T(\ell)):=\ell^2+\ell+a_\ell(f).
\end{equation}
(see \cite{S1} or \cite{S2}). Notice that $|a_\ell(f)|\le 2\ell^{\frac{4-1}{2}}=2\ell\sqrt{\ell}$ by the Ramanujan bound. 

\begin{rmk}\label{dimformula}The space $S_3(K(p))^{{\rm CAP}}$ is generated by 
all forms which do not satisfy Ramanujan conjecture. Such forms 
are minor among the whole space $S_3(K(p))$ since the main term of 
${\rm dim}(S_3(K(p)))$ is $\ds\frac{p^2-1}{2880}$ whereas ${\rm dim}(S_3(K(p))^{{\rm CAP}})=O(p)$ 
by {\rm(}\ref{SKest}{\rm)}. 

By the dimension formula \cite[p.88]{DS}, we see $S_4(\G_0(2))=0$ and if $p>2$, 
$${\rm dim}(S_4(\G_0(p)))=\frac{1}{4}\{p-2+(-1)^{\frac{p-1}{2}}\}.$$
Further, by using \cite[Theorem 4]{Popa}, if $p>2$, we see that 
$${\rm tr}W_p|S_4(\G_0(p))=
\left\{\begin{array}{cl}
\frac{1}{2}h(\Q(\sqrt{-4p})) & (p\equiv 1,5\ {\rm mod}\ 8) \\
2h(\Q(\sqrt{-p})) & (p\equiv 3\ {\rm mod}\ 8) \\
h(\Q(\sqrt{-p})) & (p\equiv 7\ {\rm mod}\ 8)
\end{array}\right.
$$
where $W_p$ is the Atkin-Lehner involution at $p$ and $h(\Q(\sqrt{-d}))$ is the class number of the imaginary quadratic field of the 
fundamental discriminant $-d$. 
Then, ${\rm dim}(S_4(\G_0(p))^-)$ is computed by the formula 
$${\rm dim}(S_4(\G_0(p))^-)=\frac{1}{2}\{{\rm dim}(S_4(\G_0(p)))-{\rm tr}W_p|S_4(\G_0(p))\}.$$
\end{rmk}

The following claim is conjectured by Ibukiyama \cite[Conjecture 4.4]{Ibu-dim} and is 
proved by van Hoften \cite[Theorem 8.2.1]{vH}. 
\begin{thm}\label{IvH} There exists a $\C$-linear isomorphism $S_3(K(p))\simeq M(U_{{\rm npr}})^0$ which is compatible with the action of $\mathbb{T}^{(p)}$. 
\end{thm}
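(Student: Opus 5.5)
We propose to prove Theorem~\ref{IvH} by comparing automorphic spectra with Arthur's classification, for $\GSp_4$ (Gee--Ta\"ibi) and for its inner form $H=G_2=GUSp_2$. No map is built directly between the spaces. On the algebraic side, $M(U_{{\rm npr}})=\bigoplus_{\pi}m(\pi)\,\pi_f^{U_{{\rm npr}}}$, where $\pi$ runs over automorphic representations of $H(\A_\Q)$ with $\pi_\infty$ trivial on $H(\R)^+$. The constant function $\varphi$ accounts exactly for the one-dimensional characters with $U_{{\rm npr}}$-fixed vectors. Since the inner product (\ref{pairing-alg}) is Hecke-adjoint, $M(U_{{\rm npr}})^0$ is exactly the sum over the infinite-dimensional $\pi$. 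On the Siegel side, $S_3(K(p))=\bigoplus_\Pi m(\Pi)\,\Pi_f^{K(p)}$, with $\Pi_\infty$ the holomorphic discrete series of weight $(3,3)$. This is the member of the discrete-series $L$-packet whose infinitesimal character equals that of the trivial representation of the compact group $H(\R)/Z$. The statement then reduces to showing, place by place and packet by packet, that the contributions agree as $\mathbb{T}^{(p)}$-modules.

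The key steps, in order, are as follows.

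\textbf{(a) Away from $p$ and $\infty$.} We have $H\simeq\GSp_4$ over $\Q_\ell$ by (\ref{Ghitza}), with $H(\Z_\ell)\leftrightarrow \GSp_4(\Z_\ell)$. Hence the unramified components, and therefore the $\mathbb{T}^{(p)}$-eigenvalues, are matched identically, and the global parameters $\psi$ are shared.

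\textbf{(b) At $\infty$.} Archimedean $A$-packets for cohomological weight three contain the holomorphic discrete series on $\GSp_4(\R)$ and the trivial representation on $H(\R)$. We compute the characters of the component groups $\mathcal{S}_{\psi_\infty}$ on these two members; Kottwitz signs enter here.

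\textbf{(c) At $p$.} This is the local Jacquet--Langlands statement. For each local $A$-parameter $\psi_p$ we must show that the total dimension of $K(p)$-fixed vectors in the $\GSp_4(\Q_p)$ packet equals the total dimension of $U_{{\rm npr}}$-fixed vectors in the $H(\Q_p)$ packet, after weighting by the packet characters. Here $U_{{\rm npr}}$ is the stabilizer of $N_p$, a non-special maximal compact subgroup. We would first settle the tempered generic case by a character identity for the two idempotents. We would try to show that $e_{K(p)}$ and $e_{U_{{\rm npr}}}$ (suitably normalized by volume and sign) are transfers of one another, using Lansky--Raghuram / Roberts--Schmidt tables of paramodular vectors against the Iwahori--Hecke description of $H(\Q_p)$-representations with $U_{{\rm npr}}$-invariants.

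\textbf{(d) Globally.} We plug (a)--(c) into the multiplicity formulas $m(\pi)=\tfrac{1}{|\mathcal{S}_\psi|}\sum_{s}\epsilon_\psi(s)\langle s,\pi\rangle$ on both sides. This yields equality of $\psi$-isotypic dimensions for every global $\psi$, hence an isomorphism compatible with $\mathbb{T}^{(p)}$.

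The main obstacle is the non-tempered part: Saito--Kurokawa parameters $\psi=\mu\boxplus\mathbf{1}\boxtimes\nu(2)$ and the one-dimensional parameters. There the packets are non-tempered, the sign $\epsilon_\psi$ is nontrivial, and the $p$-adic members include non-generic representations such as the type~VIb/Vb constituents. We must check that the $\epsilon$-condition on the $\GSp_4$ side, which produces ${\rm SK}(S_4(\G_0(p))^-)$ (Atkin--Lehner sign $-1$), is matched on the $H$ side by the members of the $H(\Q_p)$ packet that have $U_{{\rm npr}}$-fixed vectors. We also need that exactly one $\psi$, the trivial one, contributes to $M(U_{{\rm npr}})$ but not to $S_3(K(p))$, consistent with removing $\C\varphi$. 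We would attack this by explicit case analysis of the finitely many Iwahori-spherical representation types at $p$, computing $U_{{\rm npr}}$-invariants through the building of $H(\Q_p)$ (a tree-like quotient for the ramified unitary group). As a sanity check, we would compare dimensions using (\ref{classnumber}) against Ibukiyama's dimension formula for $S_3(K(p))$, since both main terms equal $\frac{p^2-1}{2880}$.
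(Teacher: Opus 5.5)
\textbf{There is a genuine gap.} The paper gives no argument of its own here: it imports the statement as Ibukiyama's conjecture, proved by van Hoften \cite[Theorem 8.2.1]{vH}. Your outline therefore has to stand as an independent proof. As written, the two inputs that carry all of the content are assumed rather than established.

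\textbf{First gap: the multiplicity formula for $H$.} You treat ``Arthur's classification for $H$'' as available. Step (a) (``the global parameters $\psi$ are shared'') and step (d) both rest on it. But $H_{\Q_p}=GU_2(D_p)$ is the non-quasi-split inner form of $\GSp_4$; equivalently, $H/Z_H\simeq SO(V)$ with $V$ a definite quinary space of Witt index $1$ at $p$.
\begin{itemize}
\item Gee--Ta\"ibi covers only the split group $\GSp_4$.
\item Ta\"ibi's multiplicity formula for inner forms requires quasi-splitness at every finite place.
\end{itemize}
So neither applies. Moreover, a multiplicity formula for $H$ indexed by the same global parameters as for $\GSp_4$ is in substance a global Jacquet--Langlands theorem, which is essentially what you are trying to prove. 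You would have to import an actual comparison theorem for inner forms of $\GSp_4$ (for instance the global liftings of R\"osner--Weissauer) and check that it covers these archimedean and $p$-adic components, or else cite van Hoften as the paper does.

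\textbf{Second gap: the local step (c) at $p$.} This is announced rather than proved. For every $\psi_p$ that can occur you need an identity, weighted by the characters of $\mathcal{S}_\psi$, between $\dim\pi_p^{K(p)}$ on the $\GSp_4(\Q_p)$-packet and $\dim\pi_p'^{U_{\rm npr}}$ on the $H(\Q_p)$-packet. It must be balanced against the Kottwitz signs $e(H_{\Q_p})=e(H_\R)=-1$ and against the archimedean characters (holomorphic versus generic discrete series on one side, the trivial representation on the other), so that the conditions $\langle\cdot,\pi\rangle=\epsilon_\psi$ match.
\begin{itemize}
\item Your proposed mechanism, that $e_{K(p)}$ and a multiple of $e_{U_{\rm npr}}$ are transfers of each other, is not an available result.
\item Even a stable matching would not control the $\kappa$-parts that enter the multiplicity formula.
\item Those $\kappa$-parts are needed for Yoshida-type parameters $\sigma_1{\rm St}\oplus\sigma_2{\rm St}$ with $\sigma_1\neq\sigma_2$ at $p$. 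These are relevant for $H$ and contribute nothing at paramodular level $p$ on the split side (types Va, Va$^*$), so you must show they contribute nothing to $M(U_{\rm npr})$.
\item They are also needed for the non-tempered Saito--Kurokawa packets. You defer that case to a future case analysis, but it is exactly where the selection of $S_4(\G_0(p))^-$ rather than $S_4(\G_0(p))^+$ has to come out of the signs. The argument is missing at its most delicate point.
\end{itemize}
Ibukiyama's dimension identity $\dim S_3(K(p))=H_2(1,p)-1$ would let you weaken (d) to a one-sided multiplicity inequality, but it does not remove the need for either input.

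\textbf{Two smaller inaccuracies.}
\begin{itemize}
\item $U_{\rm npr}$ is not a non-special maximal compact subgroup of $H(\Q_p)$. $H$ has semisimple $\Q_p$-rank one, so its building is a tree and every vertex is special. What is true is that, after the inner twist, $U_{\rm npr}$ corresponds to the same Frobenius-stable facet type as the non-special paramodular vertex of $\GSp_4$.
\item The holomorphic and generic discrete series of $\GSp_4(\R)$ have the same infinitesimal character. So infinitesimal character does not single out the holomorphic member; its character on $\mathcal{S}_{\psi_\infty}$ does.
\end{itemize}
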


\begin{Def} A Hecke eigenform in $M(U_{{\rm npr}})^0$ is said to be a CAP form (resp. a non-CAP form) 
if the corresponding form is a CAP form (resp. the corresponding form belongs to 
$S_3(K(p))^{{\rm temp}}$) under $S_3(K(p))\simeq M(U_{{\rm npr}})^0$. 
\end{Def}

\begin{thm}\label{asymp-rel-ramanujan}
Put $d_{p}:={\rm dim}M(U_{{\rm npr}})^0=|HE(U_{{\rm npr}})|=H_2(1,p)-1$. 
For each prime $\ell\neq p$, it holds that
$$\limsup_{p\to \infty}\frac{1}{d_p}\sum_{F\in HE(U_{{\rm npr}})}|\lambda_F(T(\ell))|
\le 4 \ell^{\frac{3}{2}}.$$
\end{thm}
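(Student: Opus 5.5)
Via Theorem \ref{IvH}, I transport the problem to $S_3(K(p))$. The isomorphism $S_3(K(p))\simeq M(U_{{\rm npr}})^0$ is $\mathbb{T}^{(p)}$-equivariant, so the multiset $\{\lambda_F(T(\ell))\}_{F\in HE(U_{{\rm npr}})}$ is the multiset of $T(\ell)$-eigenvalues on a Hecke eigenbasis of $S_3(K(p))$. I split this eigenbasis into the Saito--Kurokawa (CAP) part and the part spanning $S_3(K(p))^{{\rm temp}}$. This gives
$$\sum_{F\in HE(U_{{\rm npr}})}|\lambda_F(T(\ell))|=\Sigma_{{\rm temp}}+\Sigma_{{\rm CAP}}.$$
I bound each term separately and divide by $d_p=H_2(1,p)-1$.

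\emph{Tempered part.} For a Hecke eigenform $F$ in $S_3(K(p))^{{\rm temp}}$, the representation $\pi_F$ is tempered at every place by \cite[Theorem 3.2]{PS} and \cite{Wei}. In particular the local component $\pi_{F,\ell}$ is an unramified tempered representation of $\PGSp_4(\Q_\ell)$ (the central character is trivial). Its Satake parameters therefore have absolute value $1$ in the unitary normalization. The eigenvalue of the characteristic function of $K t_\ell K$ is $\ell^{3/2}$ times the trace of the four-dimensional spin representation evaluated at the Satake parameter, i.e.
$$\lambda_F(T(\ell))=\ell^{3/2}(\beta_0+\beta_0\beta_1+\beta_0\beta_2+\beta_0\beta_1\beta_2),$$
with $|\beta_i|=1$. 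Hence $|\lambda_F(T(\ell))|\le 4\ell^{3/2}$. The one point to check carefully is the normalization of $T(\ell)$: the weight $3$ forms, viewed as algebraic modular forms of weight zero, should carry the $\ell^{3/2}$ factor with the correct power. As a sanity check, the trivial representation gives eigenvalue $N_2(\ell)\sim\ell^3=\ell^{3/2}\cdot\ell^{3/2}$, consistent with this normalization. It follows that $\Sigma_{{\rm temp}}\le 4\ell^{3/2}\,d_p$.

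\emph{CAP part.} A CAP eigenform is ${\rm SK}(f)$ for a newform $f\in S_4(\G_0(p))^-$. By (\ref{evSK}) and the Ramanujan bound $|a_\ell(f)|\le 2\ell^{3/2}$, we get $|\lambda_{{\rm SK}(f)}(T(\ell))|\le \ell^2+\ell+2\ell^{3/2}$. By (\ref{SKest}) the number of such forms is at most $(p+21)/4$. Therefore
$$\Sigma_{{\rm CAP}}\le \frac{p+21}{4}\,(\ell^2+\ell+2\ell^{3/2}).$$
The main obstacle is only to confirm that the CAP forms in $S_3(K(p))$ are exactly these (levels $p$ and old forms), so that the count $(p+21)/4$ applies. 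This is covered by the cited result \cite[Theorem 5.3]{Schmidt}.

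\emph{Conclusion.} Since $d_p=H_2(1,p)-1\sim p^2/2880$ by (\ref{classnumber}), the CAP contribution divided by $d_p$ is $O(\ell^2/p)$, which tends to $0$ as $p\to\infty$ for fixed $\ell$. Hence
$$\frac{1}{d_p}\sum_{F\in HE(U_{{\rm npr}})}|\lambda_F(T(\ell))|\le 4\ell^{3/2}+O_\ell(1/p),$$
and taking $\limsup_{p\to\infty}$ gives the claim.
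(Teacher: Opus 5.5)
Your proposal is correct and is essentially the argument the paper intends: via Theorem \ref{IvH} you split $HE(U_{{\rm npr}})$ into non-CAP forms, each bounded by $4\ell^{3/2}$ as in (\ref{non-CAP}), and Saito--Kurokawa CAP forms. The CAP forms are each bounded as in (\ref{CAP}) and number $O(p)$ by (\ref{SKest}), so after dividing by $d_p\sim p^2/2880$ their contribution is $O_\ell(1/p)$ and disappears in the $\limsup$.

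Your normalization check is a sound confirmation that the factor $\ell^{3/2}$ in the Satake formula is the right one: the constant function gives $N_2(\ell)=\ell^{3/2}(\ell^{3/2}+\ell^{1/2}+\ell^{-1/2}+\ell^{-3/2})$.
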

The upper bound is nothing but the Ramanujan bound for $T(\ell)$ for 
holomorphic Siegel Hecke eigen cusp forms on $GSp_4$ of weight 3 whose automorphic representations are tempered at $\ell$ (see Section 19 of \cite{vdGeer}). 
It also coincides with the spectral radius of the special $1$-complex $S_2$ (see 
\cite[Proposition 2.6]{Setyadi}).

By Theorem \ref{IvH} we can estimate the Hecke eigenvalues for each element of $M(U_{{\rm npr}})^0$.
For a Hecke eigenform $F$ in $M(U_{{\rm npr}})^0$, using the Satake parameters $\alpha_{\ell}$ and 
$\beta_{\ell}$ (see \cite[Section 18-19]{vdGeer}), we have 
$$
\lambda_{F}(T(\ell)) = \ell^{3/2}(\alpha_{\ell} + \alpha_{\ell}^{-1} + \beta_{\ell} 
+ \beta_{\ell}^{-1}).
$$
First, for each CAP Hecke eigenform $F$ in $M(U_{{\rm npr}})^0$,  we have 
$\vert \alpha_{\ell}\vert = 1$ and $\beta_{\ell} = \ell^{1/2}$ by (\ref{evSK}). Therefore, it satisfies 
\begin{equation}\label{CAP}
\ell^2+\ell -2\ell\sqrt{\ell}\le \lambda_{F}(T(\ell))\le \ell^2+\ell +2\ell\sqrt{\ell}.
\end{equation}
This also follows directly from (\ref{evSK}) with the Ramanujan bound. 

Next, for each non-CAP Hecke eigenform $F$ in $M(U_{{\rm npr}})^0$, we have 
$\vert \alpha_{\ell}\vert = \vert \beta_{\ell}\vert = 1$. Therefore, it satisfies  
\begin{equation}\label{non-CAP}
|\lambda_F(T(\ell))|\le 4\ell\sqrt{\ell}.
\end{equation}
As a byproduct we show the following results. 
\begin{prop}\label{evest} The graph ${\mathcal G}^{{\rm MB}}(\ell, p)$ is not bipartite. 
\end{prop}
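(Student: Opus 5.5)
To show that ${\mathcal G}^{{\rm MB}}(\ell, p)$ is not bipartite, we will use the standard spectral criterion. For a connected reversible random walk, the graph is bipartite if and only if $-1$ is an eigenvalue of the random walk operator $P$; equivalently, $2$ is an eigenvalue of $\Delta={\rm id}-P$. Connectedness is Corollary \ref{lwm}. Reversibility, together with self-adjointness with respect to $\langle\cdot,\cdot\rangle$, comes from Theorem \ref{comparison} and \cite[Proposition 4.2, Lemma 4.7]{FS}. So it suffices to show that $-N_2(\ell)$ is not an eigenvalue of $T(\ell)$ on $M(U_{{\rm npr}})$.

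By Proposition \ref{comp2}, $P$ corresponds to $\frac{1}{N_2(\ell)}T(\ell)$ on $M(U_{{\rm npr}})$. This space decomposes orthogonally as $\C\varphi\oplus M(U_{{\rm npr}})^0$, and $M(U_{{\rm npr}})^0$ has the orthonormal Hecke eigenbasis $HE(U_{{\rm npr}})$. The constant function has eigenvalue $N_2(\ell)>0$, since the weighted out-degree is $N_2(\ell)$ by Theorem \ref{graphconst}. For each $F\in HE(U_{{\rm npr}})$ we invoke Theorem \ref{IvH} to transfer it to a Hecke eigenform in $S_3(K(p))$, and then split into two cases.

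\emph{CAP case.} By (\ref{CAP}), $\lambda_F(T(\ell))\ge \ell^2+\ell-2\ell\sqrt{\ell}=\ell(\sqrt{\ell}-1)^2\ge 0>-N_2(\ell)$.

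\emph{Non-CAP case.} By (\ref{non-CAP}), $|\lambda_F(T(\ell))|\le 4\ell\sqrt{\ell}$. We have $4\ell\sqrt{\ell}<(\ell^2+1)(\ell+1)$ for every prime $\ell$: for $\ell=2$ the two sides are $11.31\ldots<15$, and for $\ell\ge 3$ the right side already exceeds $\ell^3\ge 4\ell^{3/2}$. Hence $\lambda_F(T(\ell))>-N_2(\ell)$.

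It follows that every eigenvalue of $P$ is strictly greater than $-1$, so $2$ is not an eigenvalue of $\Delta$ and the graph is not bipartite.

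The main obstacle is justifying the bipartiteness criterion for this weighted, possibly looped, directed graph. The cleanest way is to use reversibility to regard the graph as an undirected weighted graph on which $P$ is self-adjoint. If a bipartition $V=V_1\sqcup V_2$ existed, the function equal to $1$ on $V_1$ and $-1$ on $V_2$ would satisfy $PF=-F$. Indeed, each edge contributes with the opposite sign, and the row sums are $1$ by Theorem \ref{graphconst} (weights normalized by $N_2(\ell)$). This contradicts the spectral bound above. Since only this direction of the criterion is needed, connectedness is not even essential.
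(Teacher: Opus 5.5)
Your proposal is correct and is essentially the paper's proof. The paper also uses (\ref{CAP}) and (\ref{non-CAP}) to rule out $-1$ as an eigenvalue of $\frac{1}{N_2(\ell)}T(\ell)^{\rm geo}$, and then cites \cite{Tao} for the spectral criterion, which you instead verify directly with the $\pm 1$ test function on a putative bipartition.
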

\begin{proof}By (\ref{CAP}) and (\ref{non-CAP}), it is easy to see that 
the eigenvalues of $\frac{1}{(\ell^2+1)(\ell+1)}T(\ell)^{{\rm geo}}$ can not attain 
$-1$. The claim follows (cf. \cite[Exercise 1.1.1-(ii)]{Tao}). 
\end{proof}

Let $1=\mu_1>\mu_2\ge \cdots \ge \mu_h>-1$ be the eigenvalues of the 
random walk matrix (the normalized adjacency matrix) for $\mathcal{G}^{{\rm MB}}(\ell, p)$ with 
$h=H_2(1,p)$ and put $\lambda_i=1-\mu_i$.  
\begin{thm}\label{refinement}For each primes $\ell\neq p$ and $2\le i\le m$, it holds that 
$$
0.22287\ldots = 1 -\frac{6 + 4\sqrt{2}}{15} \le 1-\frac{\ell^2+\ell+2\ell\sqrt{\ell}}{N_{2}(\ell)}\le \lambda_i \le 
1+\frac{4\ell\sqrt{\ell}}{N_{2}(\ell)} \le 1 +\frac{8\sqrt{2}}{15} = 1.75424\ldots
$$
where $N_2(\ell)=(\ell^2+1)(\ell+1)$. 
\end{thm}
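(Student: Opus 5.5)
By Proposition \ref{comp2}, the random walk operator on $\ell^2({\mathcal G}^{{\rm MB}}(\ell,p))$ is identified with $\frac{1}{N_2(\ell)}T(\ell)$ acting on $M(U_{{\rm npr}})$, and the identification preserves the Hilbert space structure. Since $T(\ell)$ is self-adjoint for (\ref{pairing-alg}), the spectrum $\{\mu_i\}$ is the spectrum of $\frac{1}{N_2(\ell)}T(\ell)$ on $M(U_{{\rm npr}})$. We split $M(U_{{\rm npr}})=\C\varphi\oplus M(U_{{\rm npr}})^0$.

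On $\C\varphi$, the constant function has $T(\ell)$-eigenvalue $N_2(\ell)$, which gives $\mu_1=1$. Connectedness (Corollary \ref{lwm}) shows that this eigenvalue is simple. Hence $\mu_2,\dots,\mu_h$ are exactly the values $\lambda_F(T(\ell))/N_2(\ell)$ for $F$ in the orthonormal Hecke eigenbasis $HE(U_{{\rm npr}})$ of $M(U_{{\rm npr}})^0$. By Theorem \ref{IvH}, each such $F$ is either CAP or non-CAP, and its eigenvalue is controlled as follows.

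For CAP forms, (\ref{CAP}) gives $\ell^2+\ell-2\ell\sqrt{\ell}\le\lambda_F(T(\ell))\le \ell^2+\ell+2\ell\sqrt{\ell}$. For non-CAP forms, (\ref{non-CAP}) gives $|\lambda_F(T(\ell))|\le 4\ell\sqrt{\ell}$. We have $\ell^2+\ell-2\ell\sqrt{\ell}=\ell(\sqrt\ell-1)^2\ge 0>-4\ell\sqrt\ell$, and we have $4\ell\sqrt{\ell}\le \ell^2+\ell+2\ell\sqrt\ell$ since $2\ell\sqrt\ell\le\ell^2+\ell$ (AM–GM). Therefore every $\mu_i$ with $i\ge2$ lies in
$$\left[-\frac{4\ell\sqrt\ell}{N_2(\ell)},\ \frac{\ell^2+\ell+2\ell\sqrt\ell}{N_2(\ell)}\right].$$
Passing to $\lambda_i=1-\mu_i$ yields the two middle inequalities of Theorem \ref{refinement}.

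For the numerical outer bounds, we bound $f(\ell)=\frac{\ell^2+\ell+2\ell\sqrt\ell}{(\ell^2+1)(\ell+1)}$ and $g(\ell)=\frac{4\ell\sqrt\ell}{(\ell^2+1)(\ell+1)}$ over primes. At $\ell=2$ we get $N_2=15$, $f(2)=\frac{6+4\sqrt2}{15}$ and $g(2)=\frac{8\sqrt2}{15}$. At $\ell=3$ we get $N_2=40$, $f(3)=\frac{12+6\sqrt3}{40}\approx0.56<0.777$, and $g(3)\approx0.52<0.754$. Both numerators are $O(\ell^2)$ while the denominator is of order $\ell^3$, so both functions decrease for $\ell\ge2$. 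One can check this by an elementary derivative computation in $x=\sqrt\ell$, or more crudely by the bounds $f(\ell)\le \frac{\ell^2+\ell+2\ell\sqrt\ell}{\ell^3}\le \frac{1}{\ell}+\frac{1}{\ell^2}+\frac{2}{\ell^{3/2}}$ and $g(\ell)\le 4\ell^{-3/2}$. These crude bounds are already smaller than the $\ell=2$ values once $\ell\ge5$, and $\ell=3$ was checked directly. This gives $0.22287\ldots\le\lambda_i\le1.75424\ldots$.

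The main obstacle is not the arithmetic but justifying (\ref{CAP}) and (\ref{non-CAP}) through Theorem \ref{IvH}. For this one must know that the transfer preserves the normalization of $T(\ell)$, i.e.\ that the $\diag(1,1,\ell,\ell)$-double coset on $H$ matches the classical $T(\ell)$ on $S_3(K(p))$ with the eigenvalue formula (\ref{evSK}) in the same normalization. One must also use the temperedness at $\ell$ of non-CAP forms from \cite{PS} and \cite{Wei}. I would take these from the stated results and the Satake parameter formula $\lambda_F(T(\ell))=\ell^{3/2}(\alpha_\ell+\alpha_\ell^{-1}+\beta_\ell+\beta_\ell^{-1})$.
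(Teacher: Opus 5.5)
Your proof is correct and follows the paper's own route. You identify the random walk with $\frac{1}{N_2(\ell)}T(\ell)$ on $M(U_{{\rm npr}})$ via Proposition \ref{comp2}, split off the constants, transfer $M(U_{{\rm npr}})^0$ to $S_3(K(p))$ by Theorem \ref{IvH}, and take the union of the CAP range (\ref{CAP}) and the tempered range (\ref{non-CAP}). Your explicit check that $\ell=2$ is the worst prime (direct evaluation at $\ell=3$, crude bounds for $\ell\ge 5$) supplies the numerical step the paper leaves implicit.
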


\begin{rmk}Let $\lambda_\star=\min\{\lambda_2,2-\lambda_h\}$ and 
$\mu_\star:=\max\{|\mu_2|,|\mu_h|\}$ so that $\lambda_\ast=1-\mu_\ast$. 
Then, the both quantities are important to estimate the total variation mixing time 
$t_{{\rm mix}}(\ve)$ for 
the random walk on $\mathcal{G}^{{\rm MB}}(\ell, p)$ 
{\rm(}see \cite[Section 4.5, (4.30)]{LP} for $t_{{\rm mix}}(\ve)$ and note that our graph is reversible since 
the Markov operator is self-adjoint{\rm)}. 
In fact, by Theorem \ref{refinement}, we have 
$$
\lambda_\star\ge 1-\frac{\ell^2+\ell+2\ell\sqrt{\ell}}{N_{2}(\ell)}\ge 0.22287\ldots.
$$
Thus, a well-known estimation of the mixing time (see \cite[Theorem 12.4]{LP}) shows 
$$
t_{{\rm mix}}(\ve)\le \frac{1}{\lambda_\star}\log\Big(\frac{1}{2\ve \sqrt{\pi_{\min}}}\Big)
=4.486792\ldots\times\log\Big(\frac{1}{2\ve \sqrt{\pi_{\min}}}\Big),\ 0< {}^\forall\ve\le 1
$$
where $\pi_{\min}:=\min\{\pi(v)\ |\ v\in V(\mathcal{G}^{{\rm MB}}(\ell, p)) \}$ 
and $\pi$ is the stationary distribution defined by 
$$\pi(v):=
\frac{|{\rm RA}(v)|^{-1}}{\ds\sum_{ w\in V(\mathcal{G}^{{\rm MB}}(\ell, p))}|{\rm RA}(w)|^{-1}},\ 
v\in V(\mathcal{G}^{{\rm MB}}(\ell, p)).$$ 
It follows from a simple estimation that 
$$
\pi_{\min}\ge \ds\frac{1}{H_2(1,p)\max\{|{\rm RA}(v)|\ |\ v\in V(\mathcal{G}^{{\rm MB}}(\ell, p))\}}\ge  \frac{1}{120 H_2(1,p)}.
$$ 
Here we used \cite[Theorem 7.1 and p. 343 Remark 1]{Ibu-aut} to check 
$|{\rm RA}(v)|\le |{\rm PGL}_2(\bbF_5)|=120$ 
for any $v\in V(\mathcal{G}^{{\rm MB}}(\ell, p))$.

Summing up, we have 
$$t_{{\rm mix}}(\ve)\le 4.486792\ldots\times\log\Big(\frac{\sqrt{120 H_2(1,p)}}{2\ve }\Big).$$
\end{rmk}

We end this section with the following tables showing the numbers of Hecke eigen CAP forms 
listed in the third row  and Hecke eigen non-CAP forms  listed in the fourth row together with $d_p:={\rm dim}
(M(U_{{\rm npr}})^0)=H_2(1,p)-1$. The former is simply the dimension of $S_4(\Gamma_0(p))^{-}$, which can be computed by the formula in Remark \ref{dimformula} 
(or, as a check, it can be also read off from the database \cite{LMFDB}):
\begin{table}[htbp]
{\tiny
\centering 
\renewcommand{\arraystretch}{1.5}
\begin{tabular}{|c|c|c|c|c|c|c|c|c|c|c|c|c|c|c|c|c|c|c|c|c|c|c|c|} \hline
$p$ & $\le 11$ & $13$ & $17$  & $19$ & $23$ & $29$ & $31$ &  $37$ & $41$ & $43$ 
& $47$ & $53$  & $59$  & $61$ & 67  & 71 & 73 & 79 & 83 & 89 & 97 &$\cdots$& 2027  \\ 
\hline
$d_p$ & $0$ & $1$ & $1$  & $1$ & $1$ & $2$ & $2$ &  $4$ & $3$ & $4$
& $3$ & $5$ & $4$ & $7$  & 7 & 5& 9& 8&7&9&13 &$\cdots$& 1546 \\
 \hline
CAP & $0$ & $1$ & $1$  & $1$ & $1$ & $2$ &  $2$  &  $4$ & $3$ & $4$  
& $3$ &  $5$& 4 & 6 &  7 &5& 8&7 &7&8&11 &$\cdots$& $242$\\
\hline
non-CAP & $0$ & $0$ & $0$  & $0$ & $0$ & $0$ & $0$  &  $0$ & $0$ & $0$
& 0 &0& 0  & 1 & 0 &0& 1&1 &0&1&2 & $\cdots$&1304 \\ \hline
\end{tabular}
}
\medskip
\caption{${\rm dim}(M(U_{{\rm npr}})^0)$, the number of Hecke eigen CAP (or non-CAP) forms.}
\label{table2}
\end{table}

\section{Matrix coefficients and a Sarnak-Xue type theorem}\label{MCSX}
Let $\ell$ be a prime different from $p$. 
We denote $\G_{{\rm npr}}^H(p):=\G_{{\rm npr}}^H=H(\Q)\cap K(L_{{\rm npr}})^{(\ell)}$ and 
$\G_{{\rm npr}}^{\GSp_4}(p):=\G_{{\rm npr}}^{\GSp_4}$ to emphasize the dependence of $p$.
We fix the Haar measure $\mu$ on  $H(\Q_\ell)/Z_H(\Q_\ell)\simeq \PGSp_4(\Q_\ell)$ 
such that the volume of the maximal open compact subgroup $(H/Z_H)(\Z_\ell)\simeq \PGSp_4(\Z_\ell)$ is one. It naturally defines a measure on  
the quotient space $\G_{{\rm npr}}^H(p)\bs H(\Q_\ell)/Z_H(\Q_\ell)\simeq 
\G_{{\rm npr}}^{\GSp_4}(p)\bs \PGSp_4(\Q_\ell)$ since $\mu$ is left invariant. 
We denote by $L^2(\G_{{\rm npr}}^H(p)\bs H(\Q_\ell)/Z_H(\Q_\ell))$ the $L^2$-space of 
$\G_{{\rm npr}}^H(p)\bs H(\Q_\ell)/Z_H(\Q_\ell)$ with respect to $\mu$. 
The $L^2$-space of 
$\G_{{\rm npr}}^{\GSp_4}(p)\bs \PGSp_4(\Q_\ell)$ is also similarly defined with respect to 
$\mu$. 
The groups $H(\Q_\ell)$ and $H(\Q_\ell)/Z_H(\Q_\ell)$ act there as the right translation. 
By (\ref{JZ}), $\G_{{\rm npr}}(p)\bs H(\Q_\ell)/Z_H(\Q_\ell)$ is a finite disjoint union of 
$(H/Z_H)(\Z_\ell)\simeq\PGSp_4(\Z_\ell)$ and thus, it is compact. By the Stone–Weierstrass Theorem 
(cf. \cite[p.42, Theorem 2.40]{EW}), the subspace consisting of all locally constant functions 
is dense in $L^2(\G_{{\rm npr}}(p)\bs H(\Q_\ell)/Z_H(\Q_\ell))$. Under (\ref{Ghitza}), we also have 
$$
L^2(\G_{{\rm npr}}(p)^H\bs H(\Q_\ell)/Z_H(\Q_\ell))\simeq 
L^2\!\left(\G_{{\rm npr}}^{\PGSp_4}(p)\backslash \PGSp_4(\Q_\ell)\right).$$
Thus, we have 
$$
M(U_{{\rm npr}})=L^2(\G_{{\rm npr}}^H(p)\bs H(\Q_\ell)/Z_H(\Q_\ell))^{H(\Z_\ell)}
\simeq L^2(\G_{{\rm npr}}^{\PGSp_4}(p)\bs \PGSp_4(\Q_\ell))^{\PGSp_4(\Z_\ell)}.
$$
The space $L^2(\G_{{\rm npr}}^H(p)\bs H(\Q_\ell)/Z_H(\Q_\ell))$ looks like local objects but 
it relates to global objects as $M(U_{{\rm npr}})$ via (\ref{JZ}). 
Thus, local and global representations intervene there. 

Let $\tau$ be an irreducible admissible unitary representation of $H(\Q_\ell)/Z_H(\Q_\ell)\simeq\PGSp_4(\Q_\ell)$ 
(see \cite[p.18, Section 2.1]{Ca-book} for terminology).  
Let $\tau^\vee$ be the contragredient representation of $\tau$.  
Let $V$ (resp. $V^\vee$) be the representation space of $\tau$ 
(resp. $\tau^\vee$) and $\langle \ast,\ast\rangle$ be the natural pairing on 
$V\times V^\vee$.  
For each $v\in V$ and $w\in V^\vee$, we define the matrix coefficient of $\tau$
associated with $(v,w)$ by
\[
c_{v,w}\colon H(\Q_\ell)/Z_H(\Q_\ell)\longrightarrow \C,\ g\longmapsto \langle gv,w\rangle.
\]
For each $q\in \R_{>0}$, we say that \emph{all matrix coefficients of $\tau$ belong to
$L^q$} if
\[
c_{v,w}\in L^q\bigl(H(\Q_\ell)/Z_H(\Q_\ell)\bigr)
\quad\text{for all } v\in V \text{ and } w\in V^\vee.
\]
The reader should not confuse this notion with that even if $\tau$ appear in 
$L^2\bigl(\G_{\mathrm{npr}}^H(p)\bs H(\Q_\ell)/Z_H(\Q_\ell)\bigr)$, 
it is unrelated to the condition that all matrix coefficients belong to $L^2$. 
We say $\tau$ is tempered if  all matrix coefficients belong to $L^{2+\ve}$ for 
any $\ve>0$ and non-tempered otherwise.  
Non-tempered representations are related to the phenomena violating 
Ramanujan conjecture (cf. \cite{Sarnak}).

It is important to understand what kinds of $\tau$ appears in 
$L^2(\G_{{\rm npr}}^H(p)\bs H(\Q_\ell)/Z_H(\Q_\ell))$. When $\tau$ is non-supercuspidal 
(this is the case in our setting), such a representation 
is completely classified by \cite{RS}.  
We denote by $m(\tau,\G_{{\rm npr}}^H(p))$ the multiplicity of $\tau$ in 
$L^2(\G_{{\rm npr}}^H(p)\bs H(\Q_\ell)/Z_H(\Q_\ell))$. 
We also define $q(\tau)$ to be the infimum of $q\in \R_{>0}$ such that 
any matrix coefficients of $\tau$ belong to $L^q$.

We denote by $\Pi_{{\rm sph}}$ the subset of the unitary dual of 
$H(\Q_\ell)/Z_H(\Q_\ell)\simeq \PGSp_4(\Q_\ell)$ which consists of all unramified 
irreducible admissible uniatry representations. Here $\tau$ is said to be unramified 
if it has a non-zero $(H/Z_H)(\Z_\ell)\simeq \PGSp_4(\Z_\ell)$-fixed vector.   
For each positive real number $q$, we define
\begin{equation}\label{multiLq}
M(\Pi_{{\rm sph}},\G_{{\rm npr}}^H(p),q):=\sum_{\tau\in \Pi_{{\rm sph}}\atop q(\pi)\ge q}
m(\tau,\G_{{\rm npr}}(p)).
\end{equation}
Each $\tau \in \Pi_{{\rm sph}}$ has a non-zero $(H/Z_H)(\Z_\ell)\simeq 
\PGSp_4(\Z_\ell)$-fixed vector and 
thus, the multiplicity $m(\tau,\G_{{\rm npr}}(p))$ coincides 
with the multiplicity of $\tau$ as a $\mathcal{H}_\ell$-module in 
$$L^2(\G_{{\rm npr}}^H(p)\bs H(\Q_\ell)/Z_H(\Q_\ell))^{(H/Z_H)(\Z_\ell)}=M(U_{{\rm npr}}).$$ 
Then we prove the following statement:
\begin{thm}\label{SX}
It holds that 
\begin{enumerate}
\item If $0<q\le 2$, then $M(\Pi_{{\rm sph}},\G_{{\rm npr}}^H(p),q)=
{\rm dim}(M(U_{{\rm npr}})^0)=H_2(1,p)-1=\ds\frac{p^2-1}{2880}+O(p)$;
\item If $2<q\le 3$, then $M(\Pi_{{\rm sph}},\G_{{\rm npr}}^H(p),q)=
{\rm dim}(S_3(K(p))^{{\rm CAP}})={\rm dim}(S_4(\G_0(p))^-)=O(p)$;
\item If $q>3$, then $M(\Pi_{{\rm sph}},\G_{{\rm npr}}^H(p),q)=0$.
\end{enumerate}
Further, for any $q\in \R_{}>0$, a Sarnak-Xue type hypothesis 
$$
\lim_{p\to \infty}
\frac{M(\Pi_{{\rm sph}},\G_{{\rm npr}}(p),q)}{
{\rm dim}(M(U_{{\rm npr}}))^{\frac{2}{q}}}
=0$$
holds. 
\end{thm}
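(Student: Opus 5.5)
Our plan is to decompose $M(U_{{\rm npr}})=\C\varphi\oplus M(U_{{\rm npr}})^0$ into Hecke eigenlines. Each eigenform $F$ generates an irreducible unramified unitary representation $\tau_F$ of $\PGSp_4(\Q_\ell)$ inside $L^2(\G_{{\rm npr}}^{\PGSp_4}(p)\bs \PGSp_4(\Q_\ell))$, and $m(\tau,\G_{{\rm npr}}(p))$ counts the $F$ with $\tau_F\simeq\tau$. We then compute $q(\tau_F)$ from the Satake parameters $(\alpha_\ell,\beta_\ell)$ using Macdonald's formula for the spherical function, since $c_{v,w}$ for $K$-finite vectors is controlled by the spherical matrix coefficient. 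For $F=\varphi$ constant, $\tau_F$ is the trivial representation. Its matrix coefficients are constant, so they lie in no $L^q$ on the noncompact group, and we set $q=\infty$. We will exclude this case in the count because the sum is meant to range over $M(U_{{\rm npr}})^0$, and we will flag this normalization explicitly.

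For a non-CAP $F$, Theorem~\ref{IvH} together with \cite{PS}, \cite{Wei} gives that $\tau_F$ is tempered, i.e.\ $|\alpha_\ell|=|\beta_\ell|=1$. Then Macdonald's formula bounds the spherical function by $\Xi(g)\ll \delta_B(a)^{-1/2}(1+\|a\|)^{N}$, so the coefficients lie in $L^{2+\ve}$ for all $\ve>0$ but not in $L^2$ (the representation is unramified, not discrete series), which gives $q(\tau_F)=2$. For a CAP (Saito--Kurokawa) $F$, (\ref{evSK}) gives $|\alpha_\ell|=1$ and $\beta_\ell=\ell^{1/2}$. The resulting $\tau_F$ is the unramified Saito--Kurokawa type constituent (type IIb in \cite{RS}), induced from the Siegel parabolic with exponent $\ell^{1/2}$. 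Its spherical function behaves like $\delta_B^{-1/2}$ times $\ell^{n/2}$ along the direction $\diag(\ell^n,\ell^n,1,1)$. The integrability exponent is then determined on the $\PGSp_4$ cone by comparing the decay $\ell^{-n}$ (from $\delta_P^{-1/2}\cdot\ell^{n/2}$, with $\delta_P(\diag(\ell^n,\ell^n,1,1))=\ell^{-3n}$) against the volume growth $\ell^{3n}$. This yields convergence exactly when $q>3$, so $q(\tau_F)=3$.

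Summing over eigenforms gives all three cases. For $0<q\le 2$, every form in $M(U_{{\rm npr}})^0$ is counted, giving $H_2(1,p)-1$, whose asymptotics follow from (\ref{classnumber}). For $2<q\le3$, only the CAP forms are counted, giving $\dim S_4(\G_0(p))^-=O(p)$ by (\ref{SKest}). For $q>3$, the count is zero. For the Sarnak--Xue limit, the numerator vanishes if $q>3$. If $2<q\le 3$, the ratio is $O(p)/p^{4/q}$, and $4/q\ge 4/3>1$ makes this tend to $0$. If $q\le 2$, the ratio is $(d_p)/(d_p+1)^{2/q}$, and $2/q\ge1$. This tends to $0$ only when $q<2$; at $q=2$ the ratio tends to $1$. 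So at $q=2$ the statement as written needs either strict inequality or a different convention, and we will verify it or adjust the claim (e.g.\ by treating the tempered threshold as $q(\tau)>2$).

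The main obstacle is the local computation of $q(\tau)$ for the Saito--Kurokawa constituent. This requires a careful Macdonald-formula asymptotic on the $\PGSp_4$ Cartan cone with the correct volume growth $|Kt K/K|\asymp\delta_B(t)^{-1}$. It also requires checking that $K$-finite, and not only spherical, coefficients obey the same bound, which we would obtain via Howe--Moore/Cowling-type comparison with the spherical function.
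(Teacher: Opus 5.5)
Your proposal is correct, and its global skeleton matches the paper's. You split $M(U_{\rm npr})^0$ into Hecke eigenlines and use Theorem~\ref{IvH} with \cite{PS}, \cite{Wei} to see that $\tau_F$ is either tempered of type I, so $q(\tau_F)=2$, or of Saito--Kurokawa type IIb. You then count with (\ref{classnumber}) and (\ref{SKest}).

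The real difference is how you compute $q(\tau)$ for type IIb. The paper applies Casselman's $L^q$-criterion \cite[Corollary 4.4.5]{Ca-book} to the four characters of $(\tau_U)^{\rm ss}$ quoted from \cite{MY}, tested on the generators $t_1,t_2$ of $T(\Q_\ell)^-$. That handles every matrix coefficient and both inequalities at once, with no asymptotic analysis. Your route, Macdonald's formula plus $|KtK/K|\asymp\delta_B(t)^{-1}$, gives explicit decay rates but needs four repairs.
\begin{enumerate}
\item With your normalization $\delta_P(t_1^n)=\ell^{-3n}$, the decay factor is $\delta^{1/2}$, not $\delta^{-1/2}$: $\delta_P^{1/2}(t_1^n)\,\ell^{n/2}=\ell^{-n}$.
\item The IIb parameter is a reducibility point of the unramified principal series, so some coefficients $c(w\chi)$ vanish. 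You must check that the terms with $|w\chi(t_1)|=\ell^{1/2}$ survive with nonzero total coefficient on the $t_1$-ray. These are the Jacquet exponents $\chi_2\otimes\chi_1^{-1}\otimes\chi_1\sigma$ and $\chi_1^{-1}\otimes\chi_2\otimes\chi_1\sigma$. This is exactly the information the paper's Jacquet-module input supplies.
\item You need the whole cone, not only the $t_1$-ray. On $t_1^mt_2^n$ one has $|\omega|\ll\ell^{-m-3n/2}$ up to polynomial factors, against volume $\asymp\ell^{3m+4n}$. So the $t_2$-direction only forces $q>8/3$, and $q(\tau)=3$ stands.
\item Howe--Moore/Cowling-type comparisons are not sharp for non-tempered $\tau$, and you do not need them. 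Since $\tau$ and $\tau^\vee$ are irreducible and generated by their spherical vectors, every $c_{v,w}$ is a finite combination of two-sided translates of $\omega$. Hence $q(\tau)$ is exactly the integrability exponent of $\omega$.
\end{enumerate}

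Both of your remarks on the statement are correct, and the paper does not address either. First, the trivial representation lies in $\Pi_{\rm sph}$, occurs once in $L^2$ of the compact quotient, and has $q=\infty$. So (1)--(3) are valid only for the count inside $L^2_0$, which is what the paper's proof tacitly uses; otherwise each count is off by one. Second, and more seriously, at $q=2$ the quotient is $(H_2(1,p)-1)/H_2(1,p)\to 1$, or exactly $1$ if the trivial representation is counted. So the final Sarnak--Xue assertion is false at $q=2$, and the paper's closing sentence overlooks this. The assertion does hold for every $q\neq 2$. At $q=2$ it holds if one counts only $\tau$ with $q(\tau)>q$, in which case the ratio is $O(p)/p^2\to 0$.
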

\begin{proof}
Let $L^2_0(\G_{{\rm npr}}^H(p)\bs H(\Q_\ell)/Z_H(\Q_\ell))\simeq 
L^2_0(\G_{{\rm npr}}^{\GSp_4}(p)\bs \PGSp_4(\Q_\ell))$ be the 
orthogonal complement of the space of all constant functions. 
By Theorem \ref{IvH}, we see that 
$$L^2_0(\G_{{\rm npr}}^{\GSp_4}(p)\bs \PGSp_4(\Q_\ell))^{\PGSp_4(\Z_\ell)}\simeq L^2_0(\G_{{\rm npr}}^H(p)\bs H(\Q_\ell)/Z_H(\Q_\ell))^{H(\Z_\ell)}=  
M(U_{{\rm npr}})^0 \simeq 
S_3(K(p)).$$
Let $F$ be a non-zero Hecke eigen cusp form in $S_3(K(p))$ and $\pi=\pi_F$ be 
the irreducible unitary cuspidal automorphic representation of $\PGSp_4(\A_\Q)$ associated to $F$ 
(see \cite[p.196, Section 4.6]{BJ} for cuspidal automorphic representations). 
Note that $\pi_F$ uniquely determines $F$ up to scaling by the strongly multiplicity one 
(cf. \cite{Schmidt18}, \cite{WWYY}). By Flath's theorem \cite{Flath}, we can decompose 
$\pi=\otimes'_v \pi_v$ as a restricted tensor product where $\pi_v$ is an 
irreducible unitary admissible representation of $\PGSp_4(\Q_v)$ for 
each place $v$ of $\Q$.  Since $F\in S_3(K(p))$ and $p\neq \ell$,  
the $\ell$-th component $\tau=\pi_\ell$ is unramified. 
We shall compute the matrix coefficient of $\tau$ by using both of 
local and global arguments. 
%Henceforth we sometimes regard $\tau$ with an irreducible  unitary admissible representation of $H(\Q_\ell)$ with the trivial central character  through (\ref{Ghitza}).  

First of all, by \cite[Theorem 3.2]{PS} with \cite{Wei} (note that the case (C) therein can not happen), 
$\tau$ is either an unramified ``tempered'' principal series (of type I) or is 
an unramified non-tempered of type IIb in the sense of the classification in \cite{RS}. 
We have so far discussed the global argument; we now turn to a local argument.
For $\tau$ in the former type I, we have $q(\tau)=2$ since it is tempered. 
As for the second type IIb, we can apply the criterion 
\cite[Corollary 4.4.5]{Ca-book} to compute $q(\tau)$. Note that we use the notation 
in \cite[p.18]{RS} such that $GSp_4$ is defined 
by $J={\rm anti\text{-}diag}(1,1,-1,-1)$. Our computation goes as follows. 
Let $B=TU$ be the standard Borel subgroup of $GSp_4$ with the diagonal torus 
$T=\{\diag(a,b,c b^{-1},c a^{-1})\ |\ a,b,\nu\in GL_1\}\simeq (GL_1)^3$ and 
the unipotent radical $U$.  Let $\delta_B:B(\Q_{\ell})\lra \C^\times$ 
be the modulus character of $B(\Q_{\ell})$ defined by 
$$
\delta_B(\diag(a,b,c b^{-1},c a^{-1})u)=|a^4b^2c^{-3}|_\ell
$$ 
for each 
$\diag(a,b,c b^{-1},c a^{-1})u\in B(\Q_{\ell})=T(\Q_{\ell})U(\Q_{\ell})$. 

Let $\tau_U$ be the Jacquet-module of $\tau$ along $U$ 
(cf. \cite[p.34]{Ca-book}). It is an admissible representation of $T(\Q_\ell)$ but may be reducible. 
For each $t=\diag(a,b,c b^{-1},c a^{-1})\in T(\Q_\ell)$, we define $\alpha,\beta:
T(\Q_\ell)\lra \Q^\times_\ell$ by 
$$
\alpha(t)=ab^{-1},\ \beta(t)=b^2c^{-1}.
$$
Note that $\alpha$ and $\beta$ are simple roots of $GSp_4$.
Put  
$$
T(\Q_\ell)^-:=\{t\in T(\Q_\ell)\ |\ |\alpha(t)|_\ell\le 1,\ |\beta(t)|_\ell \le 1\}.
$$
It is easy to see that $T(\Q_\ell)^-/T(\Z_\ell)Z_{GSp_4}(\Q_\ell)$ is generated by 
$$t_1:=\diag(\ell,\ell,1,1),\ t_2=\diag(\ell^2,\ell,\ell,1).$$
The Casselman's criterion (\cite[Corollary 4.4.5]{Ca-book}) for $\tau$ to belong 
$L^q$ ($q\in \R_{>0}$) is described  in terms of  the action of $T(\Q_\ell)^-/T(\Z_\ell)Z_{GSp_4}(\Q_\ell)$ on 
$(\tau_U)^{T(\Z_\ell)}=((\tau_U)^{{\rm ss}})^{T(\Z_\ell)}$. Here the superscript ``ss'' stands for the semisimplification. 
Then, the action of $T(\Q_\ell)^-/T(\Z_\ell)Z_{GSp_4}(\Q_\ell)$ can be computed by using the decomposition 
of $(\tau_U)^{{\rm ss}}$ given explicitly in \cite[p.541, Case IIb]{MY} (see also 
\cite[p.540, the line -2]{MY} for $\chi_1$ and $\chi_2$). 
In fact, as a representation of $T(\Q_p)$, we have 
$$
(\tau_U)^{{\rm ss}}=\chi_2\otimes \chi_1 \otimes \sigma +
\chi^{-1}_1\otimes \chi^{-1}_2 \otimes \chi_1\chi_2\sigma +
\chi_2\otimes \chi^{-1}_1 \otimes \chi_1\sigma +
\chi^{-1}_1\otimes \chi_2 \otimes \chi_1\sigma,
$$
where $\chi_1:=|\ast|^{\frac{1}{2}}_\ell \chi,\ \chi_2:=|\ast|^{-\frac{1}{2}}_\ell \chi$ 
for some unramified unitary characters $\chi,\sigma$ of $\Q^\times_\ell$ satisfying 
$\chi_1\chi_2\sigma^2=\mathbf{1}$. 
The character ``$\chi$'' in \cite[Corollary 4.4.5]{Ca-book} for $\tau_U$ runs over 
the above four characters in the decomposition of $(\tau_U)^{{\rm ss}}$. 
The Casselman's criterion requires 
to check if $$|\chi(a)| \delta^{\frac{1}{2}-\frac{1}{q}}_B(a)<1$$ 
for any non-trivial element $a\in T(\Q_\ell)^-/T(\Z_\ell)Z_{GSp_4}(\Q_\ell)$.  
Here $|\chi(a)|$ stands for the absolute value of the complex number $\chi(a)$.  
We remark that Casselman considered $|\chi(a)|\delta^{-\frac{1}{q}}_B(a)<1$ therein, but 
this is just because he considered non-normalized inductions and the difference   
is just $\delta^{\frac{1}{2}}_B$.
We also note that $|\chi (a)| =|\sigma (a)| = |\chi_1\chi_2(a)|= 1$, since $\chi$ and $\sigma$
are unitary.  
 Since $\delta_B(t_1)=\ell^{-3},\ \delta_B(t_2)=\ell^{-4}$, we have the following inequalities among eight tests 
 (four characters in $\chi$ with two generators $t_1,t_2$) as  
$$
 -\frac{3}{2}+\frac{3}{q}<0,\ -\frac{1}{2}-\frac{3}{2}+\frac{3}{q}<0,\ \frac{1}{2}-\frac{3}{2}+\frac{3}{q}<0
$$
 and 
$$
       \frac{1}{2}-\frac{4}{2}+\frac{4}{q}<0.
$$
Therefore, we must have $q>3$ and thus, $q(\tau)=3$.  

The Sarnak-Xue type hypothesis in the statement follows by observing  
the main term of $H_2(1,p)$ (that is $\ds\frac{p^2-1}{2880}$) and  (\ref{SKest}). 
\end{proof}

\end{document}